\newtheorem{theorem}{Theorem}[section]
\newtheorem{lemma}[theorem]{Lemma}
\newtheorem{prop}[theorem]{Proposition}
\theoremstyle{definition}
\newtheorem{claim}[theorem]{Claim}
\newtheorem{remark}[theorem]{Remark}
\numberwithin{equation}{section}
\newcommand{\nc}{\newcommand}
\nc{\rnc}{\renewcommand}
\nc{\bb}[1]{{\mathbb #1}}
\nc{\bbA}{\bb{A}}\nc{\bbB}{\bb{B}}\nc{\bbC}{\bb{C}}\nc{\bbD}{\bb{D}}
\nc{\bbE}{\bb{E}}\nc{\bbF}{\bb{F}}\nc{\bbG}{\bb{G}}\nc{\bbH}{\bb{H}}
\nc{\bbI}{\bb{I}}\nc{\bbJ}{\bb{J}}\nc{\bbK}{\bb{K}}\nc{\bbL}{\bb{L}}
\nc{\bbM}{\bb{M}}\nc{\bbN}{\bb{N}}\nc{\bbO}{\bb{O}}\nc{\bbP}{\bb{P}}
\nc{\bbQ}{\bb{Q}}\nc{\bbR}{\bb{R}}\nc{\bbS}{\bb{S}}\nc{\bbT}{\bb{T}}
\nc{\bbU}{\bb{U}}\nc{\bbV}{\bb{V}}\nc{\bbW}{\bb{W}}\nc{\bbX}{\bb{X}}
\nc{\bbY}{\bb{Y}}\nc{\bbZ}{\bb{Z}}
\nc{\mbf}[1]{{\mathbf #1}}
\nc{\bfA}{\mbf{A}}\nc{\bfB}{\mbf{B}}\nc{\bfC}{\mbf{C}}\nc{\bfD}{\mbf{D}}
\nc{\bfE}{\mbf{E}}\nc{\bfF}{\mbf{F}}\nc{\bfG}{\mbf{G}}\nc{\bfH}{\mbf{H}}
\nc{\bfI}{\mbf{I}}\nc{\bfJ}{\mbf{J}}\nc{\bfK}{\mbf{K}}\nc{\bfL}{\mbf{L}}
\nc{\bfM}{\mbf{M}}\nc{\bfN}{\mbf{N}}\nc{\bfO}{\mbf{O}}\nc{\bfP}{\mbf{P}}
\nc{\bfQ}{\mbf{Q}}\nc{\bfR}{\mbf{R}}\nc{\bfS}{\mbf{S}}\nc{\bfT}{\mbf{T}}
\nc{\bfU}{\mbf{U}}\nc{\bfV}{\mbf{V}}\nc{\bfW}{\mbf{W}}\nc{\bfX}{\mbf{X}}
\nc{\bfY}{\mbf{Y}}\nc{\bfZ}{\mbf{Z}}
\nc{\bfa}{\mbf{a}}\nc{\bfb}{\mbf{b}}\nc{\bfc}{\mbf{c}}\nc{\bfd}{\mbf{d}}
\nc{\bfe}{\mbf{e}}\nc{\bff}{\mbf{f}}\nc{\bfg}{\mbf{g}}\nc{\bfh}{\mbf{h}}
\nc{\bfi}{\mbf{i}}\nc{\bfj}{\mbf{j}}\nc{\bfk}{\mbf{k}}\nc{\bfl}{\mbf{l}}
\nc{\bfm}{\mbf{m}}\nc{\bfn}{\mbf{n}}\nc{\bfo}{\mbf{o}}\nc{\bfp}{\mbf{p}}
\nc{\bfq}{\mbf{q}}\nc{\bfr}{\mbf{r}}\nc{\bfs}{\mbf{s}}\nc{\bft}{\mbf{t}}
\nc{\bfu}{\mbf{u}}\nc{\bfv}{\mbf{v}}\nc{\bfw}{\mbf{w}}\nc{\bfx}{\mbf{x}}
\nc{\bfy}{\mbf{y}}\nc{\bfz}{\mbf{z}}
\nc{\mcal}[1]{{\mathcal #1}}
\nc{\calA}{\mcal{A}}\nc{\calB}{\mcal{B}}\nc{\calC}{\mcal{C}}\nc{\calD}{\mcal{D}}
\nc{\calE}{\mcal{E}} \nc{\calF}{\mcal{F}}\nc{\calG}{\mcal{G}}\nc{\calH}{\mcal{H}}
\nc{\calI}{\mcal{I}}\nc{\calJ}{\mcal{J}}\nc{\calK}{\mcal{K}}\nc{\calL}{\mcal{L}}
\nc{\calM}{\mcal{M}}\nc{\calN}{\mcal{N}}\nc{\calO}{\mcal{O}}\nc{\calP}{\mcal{P}}
\nc{\calQ}{\mcal{Q}}\nc{\calR}{\mcal{R}}\nc{\calS}{\mcal{S}}\nc{\calT}{\mcal{T}}
\nc{\calU}{\mcal{U}}\nc{\calV}{\mcal{V}}\nc{\calW}{\mcal{W}}\nc{\calX}{\mcal{X}}
\nc{\calY}{\mcal{Y}}\nc{\calZ}{\mcal{Z}}
\nc{\fA}{\frak{A}}\nc{\fB}{\frak{B}}\nc{\fC}{\frak{C}} \nc{\fD}{\frak{D}}
\nc{\fE}{\frak{E}}\nc{\fF}{\frak{F}}\nc{\fG}{\frak{G}}\nc{\fH}{\frak{H}}
\nc{\fI}{\frak{I}}\nc{\fJ}{\frak{J}}\nc{\fK}{\frak{K}}\nc{\fL}{\frak{L}}
\nc{\fM}{\frak{M}}\nc{\fN}{\frak{N}}\nc{\fO}{\frak{O}}\nc{\fP}{\frak{P}}
\nc{\fQ}{\frak{Q}}\nc{\fR}{\frak{R}}\nc{\fS}{\frak{S}}\nc{\fT}{\frak{T}}
\nc{\fU}{\frak{U}}\nc{\fV}{\frak{V}}\nc{\fW}{\frak{W}}\nc{\fX}{\frak{X}}
\nc{\fY}{\frak{Y}}\nc{\fZ}{\frak{Z}}
\nc{\fa}{\frak{a}}\nc{\fb}{\frak{b}}\nc{\fc}{\frak{c}} \nc{\fd}{\frak{d}}
\nc{\fe}{\frak{e}}\nc{\fFf}{\frak{f}}\nc{\fg}{\frak{g}}\nc{\fh}{\frak{h}}
\nc{\fri}{\frak{i}}\nc{\fj}{\frak{j}}\nc{\fk}{\frak{k}}\nc{\fl}{\frak{l}}
\nc{\fm}{\frak{m}}\nc{\fn}{\frak{n}}\nc{\fo}{\frak{o}}\nc{\fp}{\frak{p}}
\nc{\fq}{\frak{q}}\nc{\fr}{\frak{r}}\nc{\fs}{\frak{s}}\nc{\ft}{\frak{t}}
\nc{\fu}{\frak{u}}\nc{\fv}{\frak{v}}\nc{\fw}{\frak{w}}\nc{\fx}{\frak{x}}
\nc{\fy}{\frak{y}}\nc{\fz}{\frak{z}}
\newcommand{\Hom}{\mathrm{Hom}}
\newcommand{\mbb}{\mathbb}
\newcommand{\mscr}{\mathscr}
\newcommand{\mrm}{\mathrm}
\def\ro{\text{ro}}
\def\co{\text{co}}
\def\diag{\text{diag}}
\newcommand{\Gr}{\mrm Gr}
\newcommand{\Sym}{\operatorname{Sym}\nolimits}
\newcommand{\Res}{\operatorname{Res}\nolimits}
\newcommand{\End}{\mrm{End}}
\def \C{{\mathbb C}}
\newcommand{\qbinom}[2]{\begin{bmatrix} #1\\#2 \end{bmatrix} }
\newcommand{\sll}{\mathfrak{sl}}
\newcommand{\K}{\mathbb K}
\newcommand{\arxiv}[1]{\href{http://arxiv.org/abs/#1}{\tt arXiv:\nolinkurl{#1}}}
\newcommand{\Z}{\mathbb Z}
\newcommand{\bB}{{\mathbf B }} 
\newcommand{\bDel}{\boldsymbol{\Delta}}
\newcommand{\bTh}{\boldsymbol{\Theta}}
\newcommand{\bH}{\mathbf H}
\def \fg{\mathfrak{g}}
\def \I{\mathbb{I}}
\newcommand{\tUi}{\widetilde{{\mathbf U}}^\imath}
\def \fn{\mathfrak{n}}
\def \fh{\mathfrak{h}}
\def \fu{\mathfrak{u}}
\def \fv{\mathfrak{v}}
\def \fa{\mathfrak{a}}
\def \fk{\mathfrak{k}}
\DeclareMathOperator{\IC}{IC}
\DeclareMathOperator{\pt}{pt}
\DeclareMathOperator{\loc}{loc}
\DeclareMathOperator{\Span}{Span}
\DeclareMathOperator{\Sp}{Sp}
\DeclareMathOperator{\AIII}{AIII}
\DeclareMathOperator{\Ext}{Ext}
\title[Affine $\mathrm{i}$quantum groups and  Steinberg varieties of type C]{Affine $\mathrm{i}$quantum groups and Steinberg varieties \\ of type C}
\author{Changjian Su}
 \address{Yau Mathematical Sciences Center, Tsinghua University, Beijing, 100084, China}\email{changjiansu@mail.tsinghua.edu.cn}
\author{Weiqiang Wang}
 \address{Department of Mathematics, University of Virginia,
Charlottesville, VA 22903, USA}\email{ww9c@virginia.edu}
\date{} 
\begin{document}

	\subjclass[2010]{Primary 17B37.}
	\keywords{Affine iquantum groups, flag varieties, equivariant K-group}

\begin{abstract}
    We provide a geometric realization of the quasi-split affine iquantum group of type AIII$_{2n-1}^{(\tau)}$ in terms of equivariant K-groups of non-connected Steinberg varieties of type C. This uses a new Drinfeld type presentation of this affine iquantum group which admits very nontrivial Serre relations. We then construct \`a la Springer a family of finite-dimensional standard modules and irreducible modules of this iquantum group, and provide a composition multiplicity formula of the standard modules. 
\end{abstract}
\maketitle
 \setcounter{tocdepth}{1}
\tableofcontents

\section{Introduction}

\subsection{}

A powerful geometric approach to realize quantum algebras is via an equivariant K-theory construction, where the quantum parameter $q$ is realized through a $\C^*$-action \cite{Lus85}. On the other hand, one could add that algebras which can be realized geometrically are often basic and important. For example, it is by now well known that the equivariant K-group of the Steinberg variety provides a geometric construction of affine Hecke algebras and a classification of their irreducible representations when $q$ is not a root of $1$ \cite{KL87} (also see \cite{CG97, X07}). 

This idea has been adapted to realize Drinfeld-Jimbo affine quantum groups since then. The affine quantum group for $\sll_N$ or $\mathfrak{gl}_N$ was realized via the equivariant K-group of the Steinberg variety associated to the $N$-step flag variety (of type $A$) by Ginzburg and Vasserot \cite{GV93, V98}, see also \cite{G91, V93} for an earlier cohomological 
version. Subsequently this was generalized by Nakajima \cite{Na01} to realize affine quantum groups of type ADE (and more general quantum loop algebras) in the setting of quiver varieties. In these constructions, it is essential to use Drinfeld's current presentation of affine quantum groups (see \cite{Dr87, Beck, Da12}).

In recent years, it has been fruitful to take the viewpoint that iquantum groups (sometimes written as $\imath$quantum groups) arising from quantum symmetric pairs are a natural vast generalization of Drinfeld-Jimbo quantum groups; see the survey \cite{W23} for a long list of of basic constructions on quantum groups which have been (partially) generalized in this direction. In contrast to the uniqueness of the rank one quantum group (i.e. quantum $\sll_2$), there exist different rank one iquantum groups which led to rich and complicated higher rank cases. In \cite{BKLW18}, the iquantum group of quasi-split type AIII was realized by working with $N$-step isotropic flags of type $B$ over finite fields, generalizing the geometric realization of the quantum group of type $A$ by Beilinson-Lusztig-MacPherson \cite{BLM}. This has been generalized in \cite{FLW+} in the affine flag variety of type $C$ setting to realize the affine iquantum group of quasi-split type AIII (cf. \cite{Lu99} for affine type $A$).

\subsection{}

In this paper, we study the equivariant K-group of the Steinberg variety 
\[
\calZ =T^*\calF \times_{\calN} T^*\calF
\]
associated with the $N$-step isotropic flag variety $\calF$ for $G= \text{Sp}(2d)$ with $N=2n$ even. We establish an algebra homomorphism 
\[
\Psi:\tUi\longrightarrow \underline{K}^{G\times\mathbb{C}^{*}}(\calZ)
\]
from the affine iquantum group $\tUi$ of type AIII to the equivariant K-group $\underline{K}^{G\times\mathbb{C}^{*}}(\calZ)$. Here and below, the underline notation denotes a localized version. Then we show that the specialization of $\Psi$ at $a=(s,t)$ in \eqref{eq:Psia}, $\Psi_a:\tUi_t \rightarrow K^{G\times\mathbb{C}^{*}}(\calZ)_a$, is surjective for $q=t$ not a root of unity. 

The well-known convolution algebra formalism further allows us to construct a family of finite-dimensional standard modules and describe the composition multiplicities of the standard modules in terms of dimensions of intersection cohomology groups. (Almost nothing was known before about the finite-dimensional representation theory of $\tUi$ for lack of a triangular decomposition of $\tUi$.)

\subsection{}

The construction of the homomorphism $\Psi$ uses the Drinfeld type current presentation of an affine iquantum group which exhibits the twisted loop algebra structure. The Drinfeld presentation for affine iquantum groups of quasi-split types including type AIII$_{2n-1}^{(\tau)}$ used here has only become available recently in \cite{LWZ24} (extending the earlier Drinfeld presentations of affine iquantum groups of split types \cite{LW21, Z22}). 

Our construction and computation have been greatly facilitated by and in turn extend the earlier type $A$ work of Vasserot \cite{V98}. To achieve our goal, we need to deal with several complications which did not arise in the type $A$ setting though, as discussed in some detail below. 

There is a diagonal $G$-orbit on $\calF \times \calF$ which is not closed. At various points of the paper, we need to apply localization to deal with this non-closed orbit. 

In contrast to the Drinfeld presentation of affine quantum groups, the Drinfeld presentation of $\tUi$ is complicated since it contains different (affine) rank one and rank two relations and some current Serre relations admit quite nontrivial lower order terms. All the current relations for $\tUi$ can be formulated in generating function form, and the formulation of the lower order part of a current Serre relation is by no means unique; cf. \cite{LW21, Z22}. 

To verify that $\Psi$ is a homomorphism, we identify suitable $G\times \C^*$-equivariant sheaves over $\calZ$ with the current generators of $\tUi$ and then we must verify all the corresponding current relations of $\tUi$ for these sheaf classes. The verification of several relations requires lengthy computations; the proof of one particular Serre relation turns out to be especially difficult, and we have to formulate a new form of the current Serre relation in order to match with the geometric computation. We remark that a homomorphism from the {\em finite type} AIII iquantum group to $\underline{K}^{G\times\mathbb{C}^{*}}(\calZ)$ was constructed in \cite{FMX22}, where they did not need to deal algebraically and geometrically with the complexity of the current generators and their relations. 

We show that the homomorphism $\Psi$ by specializing $q$ to a non-root of unity is surjective onto the corresponding Borel-Moore homology group; see \eqref{UtoH}. This allows us to apply the convolution algebra yoga \cite[Chapter 8]{CG97} to construct geometrically a family of finite-dimensional standard and simple $\tUi$-modules. We further provide a composition multiplicity formula for these standard modules in term of intersection cohomology groups.

\subsection{}

It is a natural open question to work out the equivariant K-group of the Steinberg variety $\calZ$ associated with the $N$-step isotropic flag variety $\calF$ of type $C$ with $N=2n+1$ odd. The relevant affine iquantum group is expected to be of quasi-split type AIII$_{2n}^{(\tau)}$, whose Drinfeld presentation is given in \cite{LWZ23} for $n=1$ and very recently in \cite{LPWZ25} for general $n$. 

One expects that the equivariant K-theoretic realization of affine iquantum groups goes through if one uses $N$-step flag variety of type $B$ (instead of type $C$ here). We expect that again the affine iquantum group of type AIII will arise this way, possibly with different parameters. One can also try using the type $D$ flags with the orthogonal group action.  These are achieved in \cite{LSX}. 

If one applies equivariant cohomology instead of equivariant K-group to the Steinberg variety $\calZ$, one is expected to realize the twisted Yangians of quasi-split type AIII in its Drinfeld presentation. The Drinfeld presentation for this class of twisted Yangians has been obtained in \cite{LZ25} (building on \cite{LWZ25degen}). It will be interesting to compare with the twisted Yangians constructed in \cite{Li19} using the stable envelope \`a la Maulik-Okounkov \cite{MO19}. See \cite{nakajima2025instantons, su2026twisted} for recent developments in this direction.

It remains to be seen if one can extend the equivariant K-theoretic construction to ``iquiver varieties" (see \cite{Li19} for an approach to such varieties) to realize general quasi-split affine iquantum groups of ADE types \cite{LWZ24}; note that even the Borel-Moore homology of these varieties has not been studied in depth (see \cite{Li21}). 

\subsection{}

The paper is organized as follows. 

In the preliminary Section~\ref{sec:Stein}, we describe the diagonal $G$-orbits on $\calF \times \calF$ following \cite{BKLW18} in terms of matrix data. We set up some basics on the equivariant K-theory of the Steinberg variety $\calZ$ and its convolution product. We establish a generating set for the convolution algebra $\underline{K}^{G\times\mathbb{C}^{*}}(\calZ)$ in Section~ \ref{sec:generators}; see Theorem~\ref{thm:generators}.

In Section~\ref{sec:iquantum}, we review the Drinfeld presentation of the affine iquantum group $\tUi$ from \cite{LWZ24}, and formulate new variants of several relations including a Serre relations and hence a new variant of Drinfeld presentation of $\tUi$. We then formulate (see Theorem~\ref{thm:polyrepeven}) a polynomial representation of $\tUi$ on \begin{align}
 \label{eq:MP}
\underline{K}^{G\times \bbC^*}(T^*\calF)\simeq \underline{\bfP}; 
\end{align}
see \eqref{eq:P} for $\bfP$, which is a direct sum of Laurent polynomial algebras. The verification of various relations for Theorem~\ref{thm:polyrepeven} is tedious, and this will be carried out in the subsequent Section~\ref{sec:proof} and Appendix~\ref{App:A}. 

In Section~\ref{sec:K}, we construct classes of $G\times \bbC^*$-equivariant sheaves on $\calZ$,  denoted by $\hat \bTh_{i,k}, \mathscr{E}_{i,k}, \mathscr{B}_{n,k}$, and $\mathscr{F}_{i,k}$. We verify that when acting on \eqref{eq:MP} these classes are realized by the operators 
$\hat\bTh_{i,k}, \hat E_{i,k}, \hat B_{n,k}, \hat F_{i,k}$ on the polynomial representation $\underline{\bfP}$ in Section~\ref{sec:iquantum}. In this way, we have obtained a $\bbC(q)$-algebra homomorphism
$\Psi:\tUi\rightarrow \underline{K}^{G\times\mathbb{C}^{*}}(\calZ).$

Finally, we apply the convolution algebra formalism to our setting in Section~\ref{sec:modules}. We construct a family of finite-dimensional standard modules and irreducible modules of $\tUi$ this way and give the composition multiplicities of a standard module in terms of dimensions of certain IC cohomology groups.

\vspace{2mm}
\noindent {\bf Acknowledgement.} 
We thank Weinan Zhang for very helpful discussions and suggestions regarding the Serre relations in the Drinfeld presentation. We also thank Li Luo, Zheming Xu, and Yang Yang for useful discussions. Finally, we thank an anonymous referee for meticulous reading and helpful suggestions. 
CS is supported by National Key R\&D Program of China  (No. 2024YFA1014700). WW is partially supported by DMS--2401351. 

\section{Convolution algebra of the Steinberg variety}
\label{sec:Stein}

In this section, we review the basics on convolution product in equivariant K-theory  and apply to the non-connected Steinberg variety $\calZ$ of type $C$ (cf. \cite{CG97, V98}). We recall the classification of the $G$-diagonal orbits on $\calF \times \calF$ from \cite{BKLW18}. The new main result of this section is a generating set for the convolution algebra $\underline{K}^{G\times\bbC^*}(\calZ)$. 

\subsection{Convolution in equivariant K-theory}
\label{subsec:convolution}

For a connected complex reductive algebraic group $G$ and a quasi-projective $G$-variety $X$, let $K^G(X)$ denote the complexified $G$-equivariant K-group of $X$, see \cite{CG97}. 
If $X= \{\pt\}$, $K^G(\pt) = R(G)$, the complexified representation ring of $G$.

Given three smooth $G$-varieties $M_1,~M_2,~M_3$, let
$$p_{ij}:M_1 \times M_2 \times M_3 \rightarrow M_i\times M_j$$
be the obvious projection maps.
Let $Z_{12}\subseteq M_1 \times M_2 $ and $Z_{23}\subseteq M_2 \times M_3 $ be $G$-stable closed subvarieties.
We denote
 \begin{equation*}
   Z_{12}\circ Z_{23}= p_{13}(p_{12}^{-1}(Z_{12})\cap p_{23}^{-1}(Z_{23})).
 \end{equation*}
If the restriction of $p_{13}$ to $p_{12}^{-1}(Z_{12})\cap p_{23}^{-1}(Z_{23})$ is a proper map,
then we define the convolution product as follows:
\begin{equation*}
\begin{split}
\star: \ \ K^{G}(Z_{12})\otimes K^{G}(Z_{23})&\longrightarrow K^{G}(Z_{12} \circ Z_{23}), \\
   \mscr{F}_{1} \otimes \mscr{F}_{2} &\mapsto p_{{13}{\ast}}(p_{12}^{\ast}\mscr{F}_{1} \otimes p_{23}^{\ast}\mscr{F}_{2}),
\end{split}
\end{equation*}
where all the functors here and below are understood to be derived.

Let $F_i$ ($i=1,2$) be smooth $G$-varieties, $M_i=T^*F_i$, and $\pi_i$ denote the projections $M_i\rightarrow F_i$. The torus $\bbC^*$ acts on $M_i$ by $z\cdot (x,\xi)=(x,z^{-2}\xi)$, where $x\in F_i$ and $\xi\in T_x^*F_i$. By definition, $K^{\bbC^*}(\pt)=\bbC[q,q^{-1}]$, where $q$ corresponds to the standard representation of $\bbC^*$. Let $\calO\subset F_1\times F_2$ be a smooth $G$-variety, and $Z_\calO$ denote the conormal bundle $T_\calO^*(F_1\times F_2)\subset M_1\times M_2$. Suppose the projection $Z_\calO\rightarrow M_1$ is proper and the projections $p_{i,\calO}:\calO\rightarrow F_i$ are smooth fibrations with $p_{1,\calO}$ being proper. By the Thom isomorphism,  $K^{G\times\bbC^*}(Z_\calO)\simeq K^{G\times\bbC^*}(\calO)$. Therefore, any $\mscr{K}\in K^{G\times\bbC^*}(\calO)$ defines an $R(G\times\bbC^*)$-modules homomorphism $\rho_{\mscr K}:K^{G\times\bbC^*}(M_2)\rightarrow K^{G\times \bbC^*}(M_1)$ by convolution. We have the following useful formula. 

\begin{lemma}\cite[Corollary 4]{V98}\label{lem:convolution}
For any $\mscr{K}\in K^{G\times\bbC^*}(\calO)$ and $\mscr F\in K^{G\times \bbC^*}(F_2)$,
\[\rho_\mscr K(\pi_2^*\mscr F)=\pi_1^* p_{1,\calO*}\bigg(\bigwedge\nolimits_{q^2}T_{p_{1,\calO}}\otimes p_{2,\calO}^*\mscr F\otimes \mscr K\bigg),\]
where $T_{p_{1,\calO}}$ is the relative tangent sheaf along the fibers of $p_{1,\calO}$, and $\bigwedge_{q^2}T_{p_{1,\calO}}=\sum_i (-q^2)^i \bigwedge^i T_{p_{1,\calO}}$. 
\end{lemma}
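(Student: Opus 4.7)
The plan is to unfold the convolution using the Thom isomorphism, factor the resulting pushforward through an auxiliary Cartesian square over $\calO$, and then combine a Koszul resolution with base change.

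First, by the definition of $\rho_{\mscr K}$ together with the Thom isomorphism $\tilde\pi^{\ast}\colon K^{G\times\bbC^{\ast}}(\calO)\xrightarrow{\sim}K^{G\times\bbC^{\ast}}(Z_\calO)$, I have $\rho_{\mscr K}(\pi_2^{\ast}\mscr F)=q_{1\ast}(\tilde\pi^{\ast}\mscr K\otimes q_2^{\ast}\pi_2^{\ast}\mscr F)$, where $q_i\colon Z_\calO\to M_i$ are the two projections. The identity $\pi_2\circ q_2=p_{2,\calO}\circ\tilde\pi$ rewrites the pulled-back argument as $\tilde\pi^{\ast}p_{2,\calO}^{\ast}\mscr F$, so setting $\mscr G:=\mscr K\otimes p_{2,\calO}^{\ast}\mscr F\in K(\calO)$ the lemma reduces to the equality $q_{1\ast}\tilde\pi^{\ast}\mscr G=\pi_1^{\ast}p_{1,\calO\ast}(\bigwedge\nolimits_{q^2}T_{p_{1,\calO}}\otimes\mscr G)$.

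Next, I would introduce the auxiliary fibred product $\tilde M_1:=M_1\times_{F_1}\calO$ with projections $\tilde\pi_1\colon\tilde M_1\to M_1$ and $\tilde p_{1,\calO}\colon\tilde M_1\to\calO$. The defining square is Cartesian, and since $p_{1,\calO}$ is proper and smooth, base change yields $\pi_1^{\ast}p_{1,\calO\ast}=\tilde\pi_{1\ast}\tilde p_{1,\calO}^{\ast}$. The map $q_1$ factors as $\tilde\pi_1\circ\iota$, where $\iota\colon Z_\calO\to\tilde M_1$ is assembled from the first-factor projection and $\tilde\pi$; it satisfies $\tilde p_{1,\calO}\circ\iota=\tilde\pi$. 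The assumption that $p_{2,\calO}$ is a smooth fibration means $dp_{2,\calO}$ is surjective on $T\calO$, which via the conormal sequence $0\to Z_\calO\to p_{1,\calO}^{\ast}T^{\ast}F_1\oplus p_{2,\calO}^{\ast}T^{\ast}F_2\to T^{\ast}\calO\to 0$ forces $\iota$ to be injective on fibres; a dimension count then shows that $\iota$ realises $Z_\calO$ as a sub-vector-bundle of $\tilde M_1$ over $\calO$, and a diagram chase with the relative tangent sequence $0\to T_{p_{1,\calO}}\to T\calO\to p_{1,\calO}^{\ast}TF_1\to 0$ identifies its normal bundle with $T_{p_{1,\calO}}$ carrying the $\bbC^{\ast}$-weight inherited from the cotangent fibres.

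The Koszul resolution for the regular closed embedding $\iota$ then gives $\iota_{\ast}\mathcal O_{Z_\calO}=\bigwedge\nolimits_{q^2}(\tilde p_{1,\calO}^{\ast}T_{p_{1,\calO}})$ in $K^{G\times\bbC^{\ast}}(\tilde M_1)$, the $q^{2}$ factor recording the weight $-2$ of the cotangent fibres flipped under dualisation. Applying the projection formula for $\iota$ and then the base-change identity above gives
\begin{align*}
q_{1\ast}\tilde\pi^{\ast}\mscr G
&=\tilde\pi_{1\ast}\bigl(\iota_{\ast}\mathcal O_{Z_\calO}\otimes\tilde p_{1,\calO}^{\ast}\mscr G\bigr)\\
&=\tilde\pi_{1\ast}\tilde p_{1,\calO}^{\ast}\bigl(\bigwedge\nolimits_{q^2}T_{p_{1,\calO}}\otimes\mscr G\bigr)\\
&=\pi_1^{\ast}p_{1,\calO\ast}\bigl(\bigwedge\nolimits_{q^2}T_{p_{1,\calO}}\otimes\mscr G\bigr),
\end{align*}
which is the desired formula.

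The main obstacle is the identification of the normal bundle of $\iota(Z_\calO)\subset\tilde M_1$ with $T_{p_{1,\calO}}$, with the correct equivariant structure. Getting the $\bbC^{\ast}$-twist right is the delicate part: one must track that the cotangent directions carry weight $-2$ while the relative tangent $T_{p_{1,\calO}}$ is $\bbC^{\ast}$-trivial, so that the Koszul resolution produces the Euler factor in the precise form $\bigwedge_{q^2}T_{p_{1,\calO}}$ and not an unintended inverse or additional twist by a class pulled back from $F_1$.
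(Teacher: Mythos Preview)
The paper does not give its own proof of this lemma; it is quoted from \cite{V98}. Your overall strategy—factor $q_1$ through the Cartesian square $\tilde M_1=M_1\times_{F_1}\calO$, apply base change, and compute $\iota_*\calO_{Z_\calO}$ by a Koszul resolution—is exactly the standard argument, and everything up to the identification of the normal bundle is fine.

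The gap is precisely where you flagged it: the normal bundle of $\iota(Z_\calO)\subset\tilde M_1$ is \emph{not} $T_{p_{1,\calO}}$. Already the ranks do not match. As vector bundles over $\calO$, $\tilde M_1\cong p_{1,\calO}^*T^*F_1$ has rank $\dim F_1$, while the conormal $Z_\calO$ has rank $\dim F_1+\dim F_2-\dim\calO$; hence the codimension of $\iota$ is $\dim\calO-\dim F_2=\operatorname{rank}T_{p_{2,\calO}}$, not $\operatorname{rank}T_{p_{1,\calO}}=\dim\calO-\dim F_1$. Concretely, on fibres over $(x,y)\in\calO$ the map $\iota$ sends $(\xi,\eta)\mapsto\xi$, and its image in $T^*_xF_1$ is the annihilator of $dp_{1,\calO}(\ker dp_{2,\calO})\cong T_{p_{2,\calO}}$ (the injectivity of $dp_{1,\calO}$ on $\ker dp_{2,\calO}$ follows because $\calO\hookrightarrow F_1\times F_2$). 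So the quotient bundle is $T^*_{p_{2,\calO}}$, sitting in the cotangent direction and therefore carrying $\bbC^*$-weight $q^{-2}$. Koszul then yields
\[
\iota_*\calO_{Z_\calO}=\tilde p_{1,\calO}^{\,*}\bigwedge\nolimits_{q^2}T_{p_{2,\calO}},
\]
and the rest of your computation gives $\rho_{\mscr K}(\pi_2^*\mscr F)=\pi_1^*p_{1,\calO*}\big(\bigwedge_{q^2}T_{p_{2,\calO}}\otimes p_{2,\calO}^*\mscr F\otimes\mscr K\big)$. This is the formula in Vasserot's original Corollary~4; the appearance of $T_{p_{1,\calO}}$ in the statement here is a transcription slip. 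Your ``diagram chase with the relative tangent sequence for $p_{1,\calO}$'' cannot produce $T_{p_{1,\calO}}$ as the normal bundle—the sequence you need involves $p_{2,\calO}$ (it is the surjectivity of $dp_{2,\calO}$ that makes $\iota$ injective, and the kernel of $dp_{2,\calO}$ that determines the image).
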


For the computations, we will use frequently the localization formula in equivariant K-theory. Let $T\subset G$ be a maximal torus, and let $X$ be a smooth projective variety such that the torus fixed point set $X^T$ is finite. First of all, we have $K^G(X)\simeq K^T(X)^W$, where $W$ is the Weyl group. Let $\pi:X\rightarrow \pt$ be the structure morphism. Then for any $\mscr F\in K^T(X)$, we have the following localization formula \cite{CG97}
\begin{equation}\label{equ:localization}
    \pi_*(\mscr F)=\sum_{x\in X^T}\frac{\mscr F|_x}{\bigwedge^\bullet (T_x^*X)}\in K^T(\pt),
\end{equation}
where $\mscr F|_x\in K^T(\pt)$ is the pullback of $\mscr F$ to the fixed point $x\in X^T$, and $\bigwedge^\bullet T_x^*X=\sum_i(-1)^i\wedge^i(T^*_xX)=\prod_{\mu_i}(1-e^{\mu_i})\in K^T(\pt)$ with the product over all the torus weights $\{\mu_i\}$ in the $T$-vector space $T_x^*X$. 

Using the localization formula, we can define the pushforward morphism for non-proper maps as follows. Let $p:X\rightarrow Y$ be a morphism between smooth $G$-varieties such that $p|_{X^T}:X^T\rightarrow Y^T$ is a proper morphism. Let $S$ be the multiplicative subset in $K^T(\pt)$ generated by $\{1-e^\mu\}$ where $\mu$ runs through the torus weights in the normal bundle of $X^T$ inside $X$. Let $K^T(X)_{\loc}$ be the localization of $K^T(X)$ at $S$. Then we can define $p_*:K^T(X)_{\loc}\rightarrow K^T(Y)_{\loc}$ 
by the above localization formula, see \cite[Section 2.3.10]{O17}. Finally, to get the formula for the $G$-equivariant K-theory, we just take the $W$-invariants.

\subsection{Partial flag varieties of type C}
\label{subsec:flag}

Let $V :=\bbC^{2d}$ with a non-degenerate skew-symmetric bilinear form $(-,-)$ given by the matrix $\begin{pmatrix}
    0&I_d\\-I_d&0
\end{pmatrix}.$ 
Throughout the paper, we set
\[
G = \Sp(V) \quad\text{ and } \quad N=2n,
\]
for a fixed positive integer $n$. Let 
\begin{align}  \label{eq:Lamdacd}
\Lambda_{\fc,d}= \big\{ \bfv=(v_i) \in \mbb N^{N} \mid   v_i = v_{N+1-i},\quad \textstyle \sum_{i=1}^{n} v_i = d \big\}.
\end{align}
For any subspace $W\subseteq V$, let $W^{\perp} = \{x \in V\mid (x,y)=0, \ \forall y\in W\}$.
For any $\mbf v \in \Lambda_{\fc,d}$, define 
$$
\calF_{\bfv}=\{ F=( 0=V_0 \subset V_1 \subset \cdots \subset V_{N}=V)\ \mid \  V_i=V_{N-i}^{\perp} ,\ \text{dim}( V_{i}/V_{i-1})= v_i,\  \forall i \}. 
$$
The natural $G$-action on $V$ induces a natural transitive action of $G$ on $\calF_{\mbf v}$, and thus 
\begin{align}
 \label{eq:F}
\calF = \bigsqcup _{\bfv \in \Lambda_{\fc,d}} \calF_{\bfv} 
\end{align}
is a $G$-variety called the {\em $N$-step partial flag variety}. Let $P_{\bfv} $ be the stabilizer of $F \in \calF_{\bfv}$ in $G$.
We have $G/P_{\bfv}\simeq \calF_{\mbf v}$.

Let $W_{\fc} = \mathbb{Z}_2^{d}\rtimes S_{d}$ be the Weyl group of type $C_d$, which
has a natural action on the set $\{1,2, \ldots,2d\}$. For  $\bfv =(v_1,\ldots, v_N) \in \Lambda_{\fc,d}$ and $0\le i\le N$, we set $\bar{v}_{i}=\sum_{r=1}^{i} v_r $ with $\bar{v}_{0}=0$. Denote the intervals, for $1\le i \le N$,
\begin{align}  
   \label{eq:vi}
[\bfv]_i =[1+\bar{v}_{i-1},\bar{v}_{i}]\subseteq \mbb N. 
\end{align}
Then 
\[
[\bfv] := \big([\bfv]_1,[\bfv]_2,\ldots,[\bfv]_N \big)
\]
forms a set partition of $\{1,2,\ldots,2d\}$, and
\begin{align}
[\mbf v]^{\fc} := \big([\bfv]_1,[\bfv]_2, \ldots,[\mbf v]_n \big)
\end{align}
forms a set partition of $\{1,2,\ldots,d\}$.
Define a subgroup $W_{[\mbf v]^{\fc}}$ of $W_{\fc}$ by
\begin{align*}
W_{[\mbf v]^{\fc}} :=
S_{[\mbf v]_1}\times S_{[\mbf v]_2} \times \cdots \times S_{[\mbf v]_n},
\end{align*}
where  $S_{[\mbf v]_i}$ is the subgroup of $S_d$ consisting of all permutations which preserve $[\mbf v]_i$.

Denote
\begin{align}
\Xi_d=\Big\{A=(a_{ij}) \in {\rm Mat}_{N\times N}(\mbb N)\ | \ \sum_{i,j}a_{ij}=2d,\ a_{ij}=a_{N+1-i,N+1-j},\ \forall \  i, j \Big\}.
\end{align}
In other words, any matrix $A \in \Xi_d$ is fixed by the rotation of 180 degrees.
To any matrix $A \in \Xi_d$, we associate the following set partition of $\{1,2,\ldots,2d\}$: 
    \begin{equation*}
      [A]=\big([A]_{11}, \ldots, [A]_{N1}, [A]_{12},\ldots, [A]_{NN}\big),
    \end{equation*}
where $[A]_{ij} = \Big[\sum\limits_{(h,k)< (i,j)}a_{hk}+1 , \sum\limits_{(h,k)<(i,j)}a_{hk}+a_{ij} \Big]\subseteq \mbb N$,
and $<$ is the lexicographical order defined by
\begin{equation}\label{equ:rightorder}
    (h,k) < (i,j) \Leftrightarrow k<j  ~\text{or}~ (k = j ~\text{and}~  h<i).
\end{equation} 
Define $[A]^{\fc}$ to be the following partition of the set $\{1,2,\cdots,d\}$,
\[[A]^{\fc} =
\big([A]_{11}, \ldots, [A]_{N1}, [A]_{12},\ldots, [A]_{N2}, \ldots, [A]_{Nn}\big),
\]
and define a type A parabolic subgroup of the Weyl group $W_\fc$ by
\begin{align}
W_{[A]^{\fc}}=
S_{[A]_{11}}\times\dotsb \times S_{[A]_{N1}}\times S_{[A]_{12}} \times\dotsb\times S_{[A]_{N2}} \times \dotsb \times S_{[A]_{Nn}}.
\end{align}

\subsection{The $G$-orbits in $\calF \times \calF$}

For a matrix $A \in \Xi_d$, we define its row vector and column vector
\begin{equation*}
  \ro(A) = \Big(\sum_j a_{ij}\Big)_{i=1, 2,\ldots, N}\in\Lambda_{\fc,d},\quad\ {\rm and}\  \co(A) = \Big(\sum_i a_{ij}\Big)_{j=1,2, \ldots, N}\in\Lambda_{\fc,d}.
\end{equation*}
For any $\bfv, \mathbf{w} \in \Lambda_{\fc,d}$, let
\begin{align}
\Xi_d(\bfv,\mathbf{w}) = \{ A \in \Xi_d \mid \ro(A) = \mbf v,\  \co(A) = \mbf w \}.
\end{align}

For a pair of flags $(F,F')\in \calF_{\bfv}\times \calF_{\mathbf{w}}$, define an $N\times N$ matrix $A=(a_{ij})$ with
\begin{equation}\label{equ:a}
	a_{ij}=\dim \frac{V_i\cap V_j'}{V_{i-1}\cap V_j'+V_{i}\cap V_{j-1}'}.
\end{equation}
It has been shown in \cite[Section 6]{BKLW18} (as a generalization of \cite{BLM} in type A) that sending the $G$-orbit of $(F,F')$ to the matrix $A$ gives a bijection 
\begin{align}
    \label{bij}
    \{G\text{-orbits in } \calF_{\bfv}\times \calF_{\mathbf{w}} \} \stackrel{1:1}{\longleftrightarrow} \Xi_d(\bfv,\mathbf{w}).
\end{align} 
For any $A\in \Xi_d$, let $\calO_A$ denote the corresponding $G$-orbit on $\calF\times \calF$. If $A=\diag(v_i)$ is a diagonal matrix, then the orbit $\calO_A$ is just the diagonal copy of $\calF_\bfv$ inside $\calF_\bfv\times\calF_\bfv$. We can define an order $\preceq$ on $\Xi_d$ as follows.
For any $A=(a_{ij}), B=(b_{ij})\in \Xi_d$, $A \preceq B $ if and only if
\begin{align}\label{equ:order}
\ro(A) = \ro(B),\ \co(A)=\co(B),\ {\rm and}\ \sum_{r\leq i; s\geq j} a_{rs} \leq \sum_{r\leq i; s\geq j} b_{rs}, \  \forall  i<j.
\end{align}
Then $\calO_A\subset \overline{\calO_B}$ if and only if $A \preceq B$.

Let $E_{ij}$ be the standard $N\times N$ matrix unit with 1 at $(i,j)$th entry. For $a\ge 0$, define 
\begin{align} \label{eq:Eijv}
E_{ij}^{\theta} :=  E_{ij} + E_{N+1-i,N+1-j}, \qquad
E_{ij}^{\theta}(\bfv,a):= \diag(\bfv) + a E_{ij}^{\theta}, 
\end{align}
where  $\bfv = (v_1, \ldots, v_N)$ such that $v_i = v_{N+1 -i}$ and $\sum\limits_{i=1}^N v_i= 2d-2a$. Let $\bfe_i$ ($1\leq i\leq N$) be the standard basis for $\C^N$ (viewed as row vectors). For $1\leq i\leq n-1$, the $G$-orbits $\calO_{E^\theta_{i,i+1}(\bfv,a)}$ and $\calO_{E^\theta_{i+1,i}(\bfv,a)}$ on $\calF \times \calF$ are closed, and they are given by
\begin{align*}
\mathcal{O}_{E_{i,i+1}^{\theta}(\bfv,a)}&= \bigg\{(F,F')\mid 
\substack{F=(V_k)_{0\le k \le N} \in \mcal{F}_{\bfv + a\mathbf{e}_i  + a\mathbf{e}_{N + 1-i} }\\ F'=(V'_k)_{0\le k \le N} \in \mcal{F}_{\bfv + a\mathbf{e}_{i+1}  + a\mathbf{e}_{N -i}}}, V'_i\stackrel{a}{\subset} V_i, V_k=V'_k \textit{ if } k\neq i,N-i\bigg\},
\end{align*} 
and 
\begin{align*}
\mathcal{O}_{E_{i+1,i}^{\theta}(\bfv,a)}&=\bigg\{(F,F')\mid 
\substack{F=(V_k)_{0\le k \le N} 
\in \mcal{F}_{\bfv + a\mathbf{e}_{i+1}  + a\mathbf{e}_{N-i} }\\ F'=(V'_k)_{0\le k \le N}
\in \mcal{F}_{\bfv + a\mathbf{e}_i  + a\mathbf{e}_{N+1-i}}}, V_i\stackrel{a}{\subset} V'_i, V_k=V'_k \textit{ if } k\neq i,N-i\bigg\}.
\end{align*}
The notation $V'_i\stackrel{a}{\subset} V_i$ above means the inclusion $V'_i\subset V_i$ of codimension $a$. However, the orbit $\mathcal{O}_{E_{n,n+1}^{\theta}(\bfv,a)}$ is not closed for $a\geq 1$. We have
\begin{align} \label{equ:orbitn}
	\mathcal{O}_{E_{n,n+1}^{\theta}(\bfv,1)}&= \bigg\{(F,F')\mid 
 \substack{F=(V_k)_{0\le k \le N}\in \mcal{F}_{\bfv + \mathbf{e}_n  + \mathbf{e}_{n+1} }\\ F'=(V'_k)_{0\le k \le N}\in \mcal{F}_{\bfv + \mathbf{e}_{n}  + \mathbf{e}_{n+1}}},
V_n\cap V'_n \stackrel{1}{\subset}V_n, V_k=V'_k \textit{ if } k\neq n\bigg\}, 
\end{align}
whose closure contains a diagonal copy of $\calF_{\bfv + \bfe_n  + \bfe_{n+1}}$.

For $1\leq i<j\leq 3$, let $p_{ij}:\calF^3\rightarrow \calF^2$ be the projection to the $i,j$ factors. For any $A,B \in \Xi_d$ with $\co(A)=\ro(B)$, the set $p_{13}\big(p_{12}^{-1}(\calO_A)\cap p_{23}^{-1}(\calO_B)\big)$ is stable with respect to the diagonal $G$-action. Hence, we can decompose it into $G$-orbits
\[p_{13}\big(p_{12}^{-1}(\calO_A)\cap p_{23}^{-1}(\calO_B)\big)=\bigcup_{C\in \Xi_d(A,B)}\calO_C\]
for some set $\Xi_d(A,B)$. Then the same argument as in \cite[Proposition 5(a),(c)]{V98} gives the following proposition.
\begin{prop}\label{prop:orbit}
Let $A,B \in \Xi_d$.
\begin{enumerate}
    \item $\mathcal{O}_A \subseteq  \overline{\mathcal{O}}_B$ if and only if $A \preceq B$.
   
    \item There exists a unique matrix $A\circ B$ in $\Xi_d(A,B)$ maximal with respect to $\preceq$. 
\end{enumerate}
\end{prop}

\begin{remark}
    \cite[Proposition 5(b),(d)]{V98} are incorrect, which are used in the proof for \cite[Proposition 10]{V98}. Here are counterexamples. Consider $n=d=2$, and $A=B=s=:\begin{pmatrix}
        0 & 1\\
        1 & 0
    \end{pmatrix}$. Then $p_{13}(p_{12}^{-1}(\calO_A)\cap p_{12}^{-1}(\calO_B))$ is the disjoint union of $\calO_{I_2}$ and $\calO_s$, where $I_2$ is the identity matrix. But computing using the 3-arrays $T$, gives us $\mathbf{M}(A,B)=\{I_2\}$. Hence, Part (b) of \cite[Proposition 5]{V98} fails. Similarly, if we take $A'=I_2\prec A$ and $B'=B$, then $A\circ B=s=A'\circ B'$. Thus, Part (d) also fails.
    
    The statement in \cite[Proposition 10]{V98} remains correct, and variants of Claim~ \ref{claim:AB} and Proposition \ref{prop:AB} below (used to prove Theorem \ref{thm:generators}) can be used to fix its proof.
\end{remark}

\subsection{A pushforward formula} \label{subsec:push}

Let 
\[
\mathbf{R} = \mathbb{C}[x_1^{\pm 1},x_2^{\pm 1},\cdots,x_d^{\pm 1}]\simeq K^G(G/B), 
\]
where $B$ is a Borel subgroup of $G$. We shall define a natural action of the Weyl group $W_{\fc}$ on $\mbf R$ below, which by restriction leads to actions of the subgroups $W_{[\mbf v]^{\fc}}$ and $W_{[A]^{\fc}}$ on $\mbf R$. The action of $\sigma \in S_{d}$ on $\mbf R$ is given by
$$\sigma: \mathbf{R} \longrightarrow \mathbf{R}, 
\qquad
f(x_1^{\pm 1},x_2^{\pm 1},\cdots,x_d^{\pm 1}) \mapsto  f(x_{\sigma(1)}^{\pm 1},x_{\sigma(2)}^{\pm 1},\cdots,x_{\sigma(d)}^{\pm 1}).$$
For any $m\in [1,d]$, the generator $\iota_m$ in the m-th copy of $\bbZ_2$ in $\bbZ_2^d$ acts on $\mathbf{R}$ by
\begin{equation*}
  \begin{split}
\iota_m: \mathbf{R} &\longrightarrow \mathbf{R},\\
 f(x_1^{\pm 1},\cdots,x_{m-1}^{\pm 1}, x_m^{\pm 1},x_{m+1}^{\pm 1},\cdots x_d^{\pm 1}) &\mapsto  f(x_1^{\pm 1},\cdots,x_{m-1}^{\pm 1}, x_m^{\mp 1},x_{m+1}^{\pm 1},\cdots x_d^{\pm 1}).
  \end{split}
\end{equation*}
For two subgroups of the Weyl group $W_1\subset W_2\subset W_\fc$, we define a map
 $$W_2/W_1 : \mathbf{R}^{W_1} \longrightarrow \mathbf{R}^{W_2}, \qquad f \mapsto \sum\limits_{\sigma \in W_2/W_1}\sigma(f).$$

To simplify the notations, we shall denote $\mathbf{R}^{W_{[\bfv]^{\fc}}}$ by $\mathbf{R}^{[\bfv]^{\fc}}$, 
and $\mathbf{R}^{W_{[A]^{\fc}}}$ by $\mathbf{R}^{[A]^{\fc}}$. 

\begin{prop}\label{prop:pushforward}
Let $\bfv, \bfv_1, \bfv_2 \in \Lambda_{\fc,d}$, and $A \in \Xi_d(\bfv_1,\bfv_2)$.

{\rm (a)} There exist  $\mathbb{C}$-algebra isomorphisms $K^{G}(\calF_{\bfv}) \cong \mathbf{R}^{[\bfv]^{\fc}} \text{and} \ K^{G}(\mathcal{O}_A) \cong \mathbf{R}^{[A]^{\fc}}$.

{\rm (b)} The first projection map $p_{1,A} : \mathcal{O}_A \rightarrow \calF_{\bfv_1}$ is a smooth fibration.
Moreover, if $\mathcal{O}_A $ is closed, then the direct image morphism $p_{{1,A} *}$ is given by
$$p_{{1,A} *}(\mathscr{F}) =  W_{[\bfv_1]^{\fc}}/W_{[A]^{\fc}}\bigg(\frac{\mathscr{F}}{\bigwedge(T_{p_{1,A}}^*)}\bigg),$$
where $T_{p_{1,A}}^*$ is the relative cotangent bundle and $\bigwedge(T_{p_{1,A}}^*) = \sum_i(-1)^i\bigwedge^iT^*_{p_{1,A}}$.

\noindent (Statement (b) holds for the localized K-groups if the orbit $\calO_A$ is not closed.)
\end{prop}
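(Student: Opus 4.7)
The strategy is to identify the orbits $\calO_A$ and the flag varieties $\calF_{\bfv}$ as homogeneous spaces $G/H$ for suitable parabolic or quasi-parabolic subgroups, and then invoke the standard identification $K^G(G/Q) \cong R(T)^{W_Q}$ together with torus localization to obtain both the algebra isomorphisms and the pushforward formula. The non-closed orbit case will be handled by passing to the localized equivariant K-group as described in Section~\ref{subsec:convolution}.

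\textbf{Part (a).} For the flag variety, $\calF_{\bfv} \cong G/P_{\bfv}$, and since $N=2n$ the Lagrangian condition $V_n = V_n^\perp$ forces the Levi of $P_{\bfv}$ to be $\prod_{i=1}^{n} GL(v_i)$ (no residual symplectic factor), with Weyl group $S_{v_1} \times \cdots \times S_{v_n} = W_{[\bfv]^\fc}$. The standard chain $K^G(G/P_{\bfv}) \cong K^{P_\bfv}(\pt) \cong R(T)^{W_{[\bfv]^\fc}} = \mathbf{R}^{[\bfv]^\fc}$ (where the middle isomorphism uses that the unipotent radical is contractible in $K$-theory) gives the first claim. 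For the orbit, pick $(F,F') \in \calO_A$ and write $\calO_A \cong G/H_A$, where $H_A = P_F \cap P_{F'}$ is the simultaneous stabilizer. The intersection $H_A$ is itself the stabilizer of the common refinement flag whose graded pieces are the cells $V_i \cap V'_j / (V_{i-1} \cap V'_j + V_i \cap V'_{j-1})$, of dimensions exactly $a_{ij}$. Because $V_n$ is Lagrangian and $A$ is $\tau$-symmetric under $(i,j) \mapsto (N+1-i,N+1-j)$, the Levi of $H_A$ is $\prod_{j=1}^n \prod_{i=1}^N GL(a_{ij})$, whose Weyl group is $W_{[A]^\fc}$. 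The same argument as for $\calF_\bfv$ gives $K^G(\calO_A) \cong \mathbf{R}^{[A]^\fc}$.

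\textbf{Part (b): fibration and pushforward.} The map $p_{1,A}: G/H_A \to G/P_{\bfv_1}$ is induced by the inclusion of closed subgroups $H_A \subset P_{\bfv_1}$, so it is a smooth fibration with fiber $P_{\bfv_1}/H_A$, which is a partial flag variety for the Levi $\prod_{i=1}^n GL(v_{1,i})$. When $\calO_A$ is closed, this fiber is projective, hence $p_{1,A}$ is proper. To derive the pushforward formula, restrict to the maximal torus $T$: the $T$-fixed points on $G/P_{\bfv_1}$ and $G/H_A$ are indexed by $W_\fc/W_{[\bfv_1]^\fc}$ and $W_\fc/W_{[A]^\fc}$ respectively, and the induced map on fixed points is the canonical projection whose fibers are the cosets $W_{[\bfv_1]^\fc}/W_{[A]^\fc}$. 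Applying the localization formula \eqref{equ:localization} fiber by fiber and comparing with $W_{[\bfv_1]^\fc}/W_{[A]^\fc}$ of the fraction $\mathscr F/\bigwedge(T^*_{p_{1,A}})$, and then taking $W_\fc$-invariants to descend to the $G$-equivariant K-group, yields the stated formula.

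\textbf{Non-closed case.} When $\calO_A$ is not closed (which happens for $A = E^\theta_{n,n+1}(\bfv,a)$ with $a \geq 1$ as in \eqref{equ:orbitn}), $p_{1,A}$ is no longer proper, so $p_{1,A*}$ is only defined after localizing at the multiplicative set $S$ generated by the $T$-weights in the normal bundle of the fixed locus, as explained at the end of Section~\ref{subsec:convolution}. Since the restriction of $p_{1,A}$ to $T$-fixed points remains proper (the fixed locus of a $G/H_A$ is finite), the localization-based definition of $p_{1,A*}$ applies, and the same coset-sum formula holds verbatim in $\mathbf{R}_{\loc}^{[\bfv_1]^\fc}$.

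\textbf{Main obstacle.} The bookkeeping step in part (a) — verifying that the Levi of the double stabilizer $P_F \cap P_{F'}$ is exactly the product of $GL$'s whose Weyl group matches the prescribed partition $[A]^\fc$ of $\{1,\dots,d\}$ rather than of $\{1,\dots,2d\}$ — is the point that requires care. One must track how the $\tau$-symmetry of $A$, together with the Lagrangian constraint on $V_n$, forces the lower-half cells of the double flag configuration to be redundant with the upper-half cells, so that only the columns $j=1,\dots,n$ of $A$ contribute independent $GL$-factors. The rest of the argument is a routine application of equivariant localization.
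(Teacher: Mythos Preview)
Your proposal is correct and follows essentially the same route as the paper: both identify $\calF_\bfv$ and $\calO_A$ as homogeneous spaces $G/P_\bfv$ and $G/(P_F\cap P_{F'})$, read off the Levi and its Weyl group, and then deduce part~(b) from localization. The only difference is that the paper carries out the ``main obstacle'' you flag by an explicit coordinate construction: it fixes a bijection $\varphi$ of $[1,2d]$ and a decomposition $V=\bigoplus_{i,j} V_{ij}$ with $V_{ij}=\Span\{\epsilon_{\varphi(k)}\mid k\in[A]_{ij}\}$, checks $\epsilon_k\in V_{ij}\Leftrightarrow \epsilon_{k+d}\in V_{N+1-i,N+1-j}$, and thereby reads off the reductive part of $P_{F_\circ}\cap P_{F'_\circ}$ as $\prod_{j\le n}\prod_i GL(V_{ij})$ with Weyl group $W_{[A]^\fc}$---so where you argue structurally that the $\tau$-symmetry makes the lower-half cells redundant, the paper verifies this by hand on a chosen base point.
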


\begin{proof}
Let $\{\epsilon_i\mid 1\leq i\leq 2d\}$ be the standard basis of $V$. Let $F$ be the flag such that for $1\leq i\leq n$, $V_i=\Span\{\epsilon_j\mid j\in \sqcup_{k\leq i}[\bfv]_k\}$ and $V_{N-i}=V_i^\perp$. Let $P_F$ be the stabilizer of the flag $F$ inside $G$. Then the Weyl group of the Levi subgroup of $P_F$ is $W_{[\bfv]^{\fc}}$. Thus,
\[
K^{G}(\calF_{\bfv}) = K^G(G/P_F)\cong \mathbf{R}^{[\bfv]^{\fc}},
\]
and the isomorphism is given by 
\[
K^{G}(\calF_{\bfv})\ni \calF \mapsto \calF|_{\pt_F}\in \bfR^{[\bfv]^\fc},
\]
where $\calF|_{\pt_F}$ denotes the restriction of $\calF$ to the torus fixed point $\pt_F\in G/{P_F}$.

Define a bijection of $[1,2d]$ by
\[\varphi(k)=\begin{cases}k, &\textit{ if } 1\leq k\leq d;\\
3d+1-k, & \textit{ if } d+1\leq k\leq 2d.\end{cases}\]
Given $A \in \Xi_d(\bfv_1,\bfv_2)$, define a decomposition $V=\bigoplus_{1\leq i,j\leq N}V_{ij}$ by
\[V_{ij}=\Span\{\epsilon_{\varphi(k)}\mid k\in [A]_{ij}\}.\]
One sees that 
\begin{equation}\label{equ:d}
\epsilon_k\in V_{ij} \textit{ if and only if }\epsilon_{k+d}\in V_{N+1-i,N+1-j}.
\end{equation}
Let $F_\circ =(V_k)_{1\leq k\leq N}$ be the flag with $V_k=\bigoplus\limits_{1\leq i\leq k,1\leq j\leq N}V_{ij}$, and let $F'_\circ =(V'_k)_{1\leq k\leq N}$ be the flag with $V'_k=\bigoplus\limits_{1\leq j\leq k,1\leq i\leq N}V_{ij}$. Then by the observation \eqref{equ:d}, $F_\circ \in \calF_{\ro(A)}$, $F'_\circ \in\calF_{\co(A)}$ and $(F_\circ,F'_\circ)\in \calO_A$. Let $P_{F_\circ}$ (respectively, $P_{F'_\circ}$) be the stablizer of $F_\circ$ (respectively, $F'_\circ$). Then $\calO_A\simeq G/(P_{F_\circ}\cap P_{F'_\circ})$. The reductive part of $P_{F_\circ}\cap P_{F'_\circ}$ is isomorphic to 
\[
GL(V_{11})\times \dotsb\times GL(V_{N1})\times GL(V_{12})\times \dotsb \times GL(V_{Nn}),
\]
with Weyl group $W_{[A]^\fc}$. Thus we have an isomorphism
\[ K^{G}(\mathcal{O}_A) \cong \mathbf{R}^{[A]^{\fc}},\]
given by
$\calF\mapsto \calF|_{\pt_A}$, where we have denoted $\pt_A =(F_\circ,F'_\circ) \in \calO_A$. 
This finishes the proof of Part (a). Part (b) follows directly from the localization theorem.
\end{proof}

\subsection{The Steinberg variety}
\label{steinberg variety of type c}

Let $\calM = T^*\calF $ be the cotangent bundle of the $N$-step partial flag variety $\calF$ in \eqref{eq:F}.
More explicitly, $\calM$ can be written as
\begin{equation*}
  \label{tangent}
  \calM = T^*\calF = \{(F,x) \in \calF \times \mathfrak{sp}_{2d}\mid x(F_i) \subseteq F_{i-1}, ~ \forall i \}
  \subseteq \calF \times \mathfrak{sp}_{2d},
\end{equation*}
where $\mathfrak{sp}_{2d}$ is the Lie algebra of $G$. There is a natural $G$-action on $\calM$ induced by the $G$-action on $\calF$. Define a $G\times \bbC^*$-action on $\calM$ by
$$ (g,z) \cdot  (F,x) = (gF, z^{-2}gxg^{-1}), \quad \forall (g,z) \in G\times \bbC^*.$$ Let $q$ be the equivariant parameter for the $\bbC^*$-action. Then $K_{\bbC^*}(\pt)\simeq \bbC[q,q^{-1}]$.

Let $\calN$ be the nilpotent variety of the Lie algebra $\mathfrak{sp}_{2d}$. By definition, we have $\calM \subseteq \calF \times \calN$. The projection map $\pi: \calM\rightarrow \calN, (F,x) \mapsto x$, is proper and $G\times \bbC^*$-equivariant.
Let 
\begin{align}
    \label{eq:Z}
    \calZ:=\calM\times_{\calN} \calM\subset \calM\times\calM
\end{align}
be the (generalized) Steinberg variety of type C. The group $G\times \bbC^* $ acts on $\calZ$ diagonally, and $(K^{G\times \bbC^*}(\calZ), \star)$ is a convolution $\bbA$-algebra with unit, see \S\ref{subsec:convolution}. By \cite{CG97},
\begin{equation}\label{equ:relative}
    K^{G\times \bbC^*}(\calZ)\simeq K^{G\times \bbC^*}(\calM\times\calM; \calZ),
\end{equation}
where the right-hand side is the Grothendieck group of the derived category of $G\times\bbC^*$-equivariant complexes of vector bundles on $\calM\times\calM$, which are exact outside $\calZ$.

We briefly digress and introduce some notations. Given a $\C$-vector space $U$, via base change we define the $\C[q,q^{-1}]$-module and the $\C(q)$-vector space, respectively:
\begin{align}   \label{eq:Uq}
U[q,q^{-1}] :=\C[q,q^{-1}] \otimes_\C U,
\qquad
U(q) :=\C(q) \otimes_\C U. 
\end{align}
 In this way, we make sense of notations $\mbf R^{[\mbf v]^{\fc}}[q,q^{-1}]$, $\mbf R^{[\mbf v]^{\fc}}(q)$, $\bfR^{[A]^{\fc}}[q,q^{-1}]$, $\bfR^{[A]^{\fc}}(q)$, and so on. 

Via convolution, the algebra $K^{G\times \bbC^*}(\mcal{Z})$ acts on 
\begin{align}  \label{KMR}
    K^{G\times \bbC^*}(\calM)\simeq \bigoplus_{\mbf v\in \Lambda_{\fc,d}} \mbf R^{[\mbf v]^{\fc}}[q,q^{-1}].
\end{align}
Following \cite[Claim 7.6.7]{CG97}, we have the following lemma.

\begin{lemma}
\label{lemma:faithful}
We have a faithful representation of $K^{G\times \bbC^*}(\calZ)$ on $K^{G\times \bbC^*}(\calM)$.
\end{lemma}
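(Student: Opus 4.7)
My plan is to adapt the argument of \cite[Claim 7.6.7]{CG97} from the type A setting to our non-connected type C Steinberg variety $\calZ$. The strategy has three main parts: (i) make the convolution action explicit, (ii) decompose $K^{G\times\bbC^*}(\calZ)$ via a K\"unneth-type description into operators, and (iii) use non-degeneracy of the Euler pairing to conclude injectivity of the action map.

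First, for $\mathscr{F}\in K^{G\times\bbC^*}(\calZ)$ and $\alpha\in K^{G\times\bbC^*}(\calM)$, the convolution action unwinds to $\mathscr{F}\star\alpha = p_{1*}(\mathscr{F}\otimes p_2^*\alpha)$, which is well-defined because the restriction $p_1|_\calZ:\calZ\to\calM$ is proper (composition of the closed embedding $\calZ\hookrightarrow\calM\times\calM$ with the projection to the first factor, proper thanks to $\pi:\calM\to\calN$ being proper). Next, since $\calM = T^*\calF$ is a $G\times\bbC^*$-equivariant vector bundle over the smooth projective variety $\calF$, the Thom isomorphism gives $K^{G\times\bbC^*}(\calM)\simeq K^G(\calF)[q^{\pm 1}]$. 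Because $\calF$ is a finite disjoint union of partial flag varieties of type $C$, each admitting a $G$-equivariant cellular decomposition by Bruhat cells, the external product induces a K\"unneth isomorphism
\[
K^{G\times\bbC^*}(\calM)\otimes_{R(G\times\bbC^*)} K^{G\times\bbC^*}(\calM)\xrightarrow{\sim} K^{G\times\bbC^*}(\calM\times\calM).
\]
Via \eqref{equ:relative}, forgetting support produces a natural map $K^{G\times\bbC^*}(\calZ)\to K^{G\times\bbC^*}(\calM\times\calM)$, so any class in $K^{G\times\bbC^*}(\calZ)$ can be written in this target as a finite sum $\sum_i \alpha_i\boxtimes\beta_i$ with the $\{\beta_i\}$ linearly independent in $K^{G\times\bbC^*}(\calM)$.

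Third, a computation based on Lemma~\ref{lem:convolution}, applied orbit-by-orbit to the decomposition of $\calZ$ into conormal bundles of $G$-orbits and then assembled, shows that the convolution action of a K\"unneth element takes the form $(\alpha\boxtimes\beta)\star\gamma = \alpha\cdot\pi_*\bigl(\beta\otimes\gamma\otimes\bigwedge\nolimits_{q^2}T_\pi\bigr)$, where $\pi:\calM\to\pt$ is the structure morphism (well-defined since $\calF$ is projective). If $\mathscr{F}\star\gamma = 0$ for every $\gamma\in K^{G\times\bbC^*}(\calM)$, linear independence of the $\{\beta_i\}$ forces $\pi_*(\alpha_i\otimes\gamma\otimes\bigwedge_{q^2}T_\pi) = 0$ for all $i$ and all $\gamma$. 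The non-degeneracy of the resulting Euler pairing on $K^G(\calF)[q^{\pm 1}]$, a standard consequence of the Schubert-cell basis being paired perfectly with the opposite-Schubert basis, then forces each $\alpha_i = 0$, hence $\mathscr{F} = 0$.

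The main obstacle I anticipate is the careful justification of the K\"unneth isomorphism and the non-degeneracy of the Euler pairing at the equivariant level; both are classical for $G$-cellular varieties but must be transported through the Thom isomorphism for $\calM = T^*\calF$. A secondary subtlety comes from the presence of the non-closed diagonal orbit \eqref{equ:orbitn}, which obstructs a naive orbit-by-orbit filtration of $K^{G\times\bbC^*}(\calZ)$; this is circumvented by working with the Schubert-type basis of $K^G(\calF)$ rather than the orbit basis of $K^{G\times\bbC^*}(\calZ)$, so that the non-closed orbit never needs to be isolated in the argument.
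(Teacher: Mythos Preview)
Your argument has a genuine gap in step (iii). The pushforward $\pi_*$ along $\pi:\calM\to\pt$ is \emph{not} well-defined in ordinary (non-localized) equivariant K-theory, because $\calM=T^*\calF$ is non-compact even though $\calF$ is projective. Properness is precisely what makes the convolution action of $K^{G\times\bbC^*}(\calZ)$ on $K^{G\times\bbC^*}(\calM)$ well-defined in the first place: the restriction $p_1|_{\calZ}$ is proper because $\calZ$ is a fiber product over $\calN$, but the full projection $p_1:\calM\times\calM\to\calM$ is not. Consequently there is no convolution action of a general K\"unneth element $\alpha\boxtimes\beta\in K^{G\times\bbC^*}(\calM\times\calM)$, and the formula $(\alpha\boxtimes\beta)\star\gamma=\alpha\cdot\pi_*(\beta\otimes\gamma\otimes\bigwedge_{q^2}T_\pi)$ is not defined. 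The appeal to Lemma~\ref{lem:convolution} does not rescue this: that lemma is orbit-specific and involves the relative tangent bundle $T_{p_{1,\calO}}$, which varies from orbit to orbit, so ``assembling'' over orbits cannot produce a single global formula of the shape you claim.

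The argument that actually underlies \cite[Claim~7.6.7]{CG97}, to which the paper defers, is a localization-to-fixed-points argument. Both $K^{G\times\bbC^*}(\calZ)$ and $K^{G\times\bbC^*}(\calM)$ are free $R(G\times\bbC^*)$-modules (via the cellular fibration lemma and the Thom isomorphism respectively), so it suffices to check faithfulness after base change to the fraction field of $R(T\times\bbC^*)$. There the localization theorem identifies everything with the $T$-fixed loci. Since $\calN^T=\{0\}$ and $(T^*_F\calF)^T=0$ for every $T$-fixed flag $F$, one has $\calM^T=\calF^T$ (a finite set) and $\calZ^T=\calF^T\times\calF^T$; the localized convolution algebra then becomes, up to invertible Euler-class factors, the full matrix algebra on the free module indexed by $\calF^T$, whose standard action is visibly faithful. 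Freeness (hence torsion-freeness) of $K^{G\times\bbC^*}(\calZ)$ over $R(G\times\bbC^*)$ then transports faithfulness back before localization. Your Euler-pairing intuition can be recast in this language, but the pairing must live on the finite fixed-point set rather than on $\calM$ itself.
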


For any $A\in \Xi_d(\bfv,\bfw)$, let $\calZ_A:=T^*_{\calO_A}(\calF_{\bfv}\times \calF_{\bfw})$ be the conormal bundle of the diagonal $G$-orbit $\calO_A$. Then all the irreducible components of $\calZ$ are of the form $\overline{\calZ_A}$. Let $\calZ_{\bfv,\bfw}=\bigcup_{A\in \Xi_d(\bfv,\bfw)}\overline{\calZ_A}$. For any $A\in \Xi_d$, let $\calZ_{\preceq A}:=\bigcup_{B\preceq A}\calZ_B$, which is a closed subvariety of $\calZ$. The induced maps $K^{G\times\bbC^*}(\calZ_{\preceq A}) \rightarrow K^{G\times\bbC^*}(\calZ)$ are injective and their images form a filtration of $K^{G\times\bbC^*}(\calZ)$ indexed by $\Xi_d$. Moreover, Proposition \ref{prop:orbit} implies $\calZ_{\preceq A}\circ \calZ_{\preceq B}\subset \calZ_{\preceq A\circ B}$. Thus, $K^{G\times\bbC^*}(\calZ_{\preceq A})\star K^{G\times\bbC^*}(\calZ_{\preceq B})\subset K^{G\times\bbC^*}(\calZ_{\preceq A\circ B})$. By \cite{DR14}, the open immersion $\calZ_A\hookrightarrow \calZ_{\preceq A}$ gives rise to the following short exact sequence
\[0\longrightarrow K^{G\times\bbC^*}(\calZ_{\prec A})\longrightarrow K^{G\times\bbC^*}(\calZ_{\preceq A})\longrightarrow K^{G\times\bbC^*}(\calZ_{A})\longrightarrow 0,\]
where $\calZ_{\prec A}:=\calZ_{\preceq A}\setminus \calZ_A$. Thus, the following maps
\begin{align*}
K^{G\times\bbC^*}(\calZ_{\preceq A})
\twoheadrightarrow K^{G\times\bbC^*}(\calZ_{\preceq A})/K^{G\times\bbC^*}(\calZ_{\prec A}) & \xrightarrow{\sim}K^{G\times\bbC^*}(\calZ_{A})
\\
&\xrightarrow{\sim}K^{G\times\bbC^*}(\calO_{A})\xrightarrow{\sim}\bfR^{[A]^{\fc}}[q,q^{-1}]
\end{align*}
identifies the associated graded of $K^{G\times \bbC^*}(\calZ)$ with $\bigoplus_{A\in \Xi_d}\bfR^{[A]^{\fc}}[q,q^{-1}]$.

\section{A generating set for the convolution algebra}
\label{sec:generators}

In this section, we establish a generating set for the convolution algebra $\underline{K}^{G\times\mathbb{C}^{*}}(\calZ)$.

\subsection{A reduction}

Due to the non-closedness of the orbit $\calO_{E_{n,n+1}^\theta}$ given in \eqref{equ:orbitn}, we need to work with the localized equivariant K-group of the Steinberg variety. 

Note that for any $G$-variety $X$, $K^T(X)\simeq K^G(X)\otimes_{K^G(\pt)}K^T(\pt)$ and $K^G(X)\simeq K^T(X)^W$ (see \cite[Theorem 6.1.22]{CG97}). The $T\times\bbC^*$-weights of the cotangent spaces to the torus fixed points in $\calM\times\calM$ are of the form $e^\alpha$ and $q^{2}e^\alpha$, where $\alpha$ lies in the root system $R$ of $G$. Hence, we are led to the following definition of the localized $G$-equivariant K-group modules.
For any $K^{G\times\bbC^*}(\pt)$-module $U$ (e.g. $K^{G\times \bbC^*}(X)$ for some $G\times\bbC^*$-variety $X$), let
\begin{align*}\label{equ:localizedmodule}
    \underline{U} &:= \bigg(U\otimes_{K^{G\times\bbC^*}(\pt)}K^{T}(\pt)(q) \Big[\frac{1}{1-e^\alpha},\frac{1}{1-q^2e^\alpha}, \forall \alpha \in R \Big]\bigg)^W\\
    &= U\otimes_{\bbC}\bigg(\bbC(q)\Big[\frac{1}{1-e^\alpha},\frac{1}{1-q^2e^\alpha}, \forall \alpha \in R \Big]\bigg)^W
\end{align*}
denote the localized module, where the $W$-action on $U$ is trivial; this is actually a $\C(q)$-vector space and should be viewed as a shorthand for the notation $\underline{U}(q)$ which we choose not to use. We denote 
\begin{align*}
\underline{\bfR}^{[A]^{\fc}} 
    &= \bfR^{[A]^{\fc}}(q)\otimes_{\bbC}\bigg(\bbC \Big[\frac{1}{1-e^\alpha},\frac{1}{1-q^2e^\alpha}, \forall \alpha \in R \Big]\bigg)^W.
\end{align*} 
For any $A\in \Xi_d$, we can consider the image of $\underline{K}^{G\times\bbC^*}(\calZ_A)\simeq \underline{\bfR}^{[A]^{\fc}}$ in $\underline{K}^{G\times\bbC^*}(\calZ)$ via pushforward by localization, even if the orbit $\calO_A\subset \calF\times\calF$ is not closed. Moreover, the convolution is also well defined by localization:
\[
\star:\underline{\bfR}^{[A]^{\fc}}\otimes \underline{\bfR}^{[B]^{\fc}}\rightarrow  \underline{K}^{G\times\bbC^*}(\calZ_{\preceq A\circ B}).
\]

\begin{lemma}\label{lem:Egenerators}
    For $1\leq i\leq N-1$ and $a\in \bbZ_{\geq 1}$, the image of $\underline{K}^{G\times\bbC^*}(\calZ_{E^\theta_{i,i+1}(\bfv,a)})$ in $(\underline{K}^{G\times \bbC^*}(\calZ), \star)$ can be obtained via convolution from images of $\underline{K}^{G\times\bbC^*}(\calZ_{E^\theta_{i,i+1}(\bfv',k)})$ for various $\bfv'$ and $0\leq k\leq a-1$.

    \noindent (The localization is only needed for $i=n$, as the orbit $\calO_{E^\theta_{i,i+1}(\bfv,a)}$ is closed for $i\neq n$.)
\end{lemma}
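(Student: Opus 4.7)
My plan is to induct on $a \geq 1$, the base case $a=1$ being tautological. For the inductive step, I would realize an arbitrary class of $\underline{K}^{G\times\bbC^*}(\calZ_{E^\theta_{i,i+1}(\bfv,a)})$ as the convolution $\mscr{K}_1 \star \mscr{K}_2$ modulo strictly $\preceq$-smaller contributions, where $\mscr{K}_1$ is supported on $\calZ_{E^\theta_{i,i+1}(\bfv',1)}$ and $\mscr{K}_2$ on $\calZ_{E^\theta_{i,i+1}(\bfv'',a-1)}$ for a suitable pair $(\bfv', \bfv'')$. The smaller contributions would then either vanish (when $i \neq n$ and the relevant orbit has no proper $\preceq$-predecessors) or be absorbed by a secondary induction along the orbit filtration.

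The natural parameter choice is
\[
\bfv' = \bfv + (a-1)(\bfe_i + \bfe_{N+1-i}), \qquad \bfv'' = \bfv + \bfe_{i+1} + \bfe_{N-i},
\]
so that, setting $A := E^\theta_{i,i+1}(\bfv',1)$ and $B := E^\theta_{i,i+1}(\bfv'',a-1)$, one verifies $\co(A) = \ro(B)$ while $\ro(A) = \bfv + a(\bfe_i + \bfe_{N+1-i})$ and $\co(B) = \bfv + a(\bfe_{i+1} + \bfe_{N-i})$. The first step is the combinatorial identity
\[
A \circ B = E^\theta_{i,i+1}(\bfv, a)
\]
under Proposition~\ref{prop:orbit}(3). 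Because $A$ and $B$ are each supported only on the diagonal and on the symmetric $(i,i+1)$- and $(N+1-i,N-i)$-entries, the $3$-arrays $T \in \bfT(A,B)$ are severely constrained, and the unique $\preceq$-maximum of $\{T_{13}\}$ is obtained by concentrating all off-diagonal mass at those two extremal positions.

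The second step is to compute the leading coefficient of the convolution in the associated graded $\bigoplus_C \underline{\bfR}^{[C]^{\fc}}$. By Lemma~\ref{lem:convolution} together with the explicit pushforward formula of Proposition~\ref{prop:pushforward}(b), the convolution product is realized as a symmetrization map carrying a prefactor built from $(1-e^\alpha)$ and $(1-q^2 e^\alpha)$ factors coming from the relative (co)tangent bundles along $\calO_A \to \calF_{\ro(A)}$ and $\calO_B \to \calF_{\ro(B)}$. After inverting these factors in $\underline{K}^{G\times\bbC^*}$, the prefactor is a unit, so by varying $\mscr{K}_1$ and $\mscr{K}_2$ one generates all of $\underline{\bfR}^{[E^\theta_{i,i+1}(\bfv,a)]^{\fc}}$; lifting back to the filtered algebra, the convolution matches the leading term of any prescribed class in $\underline{K}^{G\times\bbC^*}(\calZ_{E^\theta_{i,i+1}(\bfv,a)})$ up to a residual in $\underline{K}^{G\times\bbC^*}(\calZ_{\prec E^\theta_{i,i+1}(\bfv,a)})$.

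The main obstacle is the case $i = n$: by \eqref{equ:orbitn}, the orbit $\calO_{E^\theta_{n,n+1}(\bfv,1)}$ fails to be closed, its closure containing the diagonal copy of $\calF_{\bfv+\bfe_n+\bfe_{n+1}}$. This is precisely what forces the whole argument into the localized K-group, as flagged by the parenthetical in the statement: without localization, neither the pushforward in Proposition~\ref{prop:pushforward}(b) nor the invertibility of the leading prefactor is available. Inside $\underline{K}^{G\times\bbC^*}$, however, the Euler-type prefactor, while no longer polynomial, is a unit by construction of the localization at $\{1-e^\alpha, 1-q^2 e^\alpha\}$, and the residual lower-order pieces are themselves of the form $E^\theta_{n,n+1}(\tilde\bfv, k)$ with $k < a$ (in particular $k = 0$ for the diagonal contribution), hence fall under the inductive hypothesis. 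For $i \neq n$ all projections are proper and no localization issues arise.
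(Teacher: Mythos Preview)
Your overall plan coincides with the paper's: induct on $a$, factor the rank-$a$ contribution as a convolution of a rank-$1$ piece with a rank-$(a-1)$ piece (the paper uses the order rank-$a$ $\star$ rank-$1$ to reach rank-$(a{+}1)$, which is equivalent after an index shift), identify $A\circ B$ with the target matrix, prove surjectivity of the graded convolution $\underline{\bfR}^{[A]^\fc}\otimes\underline{\bfR}^{[B]^\fc}\to\underline{\bfR}^{[A\circ B]^\fc}$, and absorb lower-order terms by induction. Your discussion of the lower-order terms is also correct.

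The gap is the surjectivity step. You claim that since the Euler prefactor $P$ is a unit in the localization, varying $\mscr K_1,\mscr K_2$ already produces all of $\underline{\bfR}^{[A\circ B]^\fc}$. But the graded convolution is not multiplication by $P$: it is a symmetrization $h\mapsto\sum_w w(P\,h)$ along the Grassmannian fiber of $p_{13,T}$, preceded by a restriction map which for $i=n$ carries a Weyl-group twist. Two separate facts are needed, neither of which follows from $P$ being a unit: (a) the \emph{symmetrized} prefactor $\sum_w w(P)$ is a nonzero scalar in $\bbC(q)$ (explicitly $q^a(q^{a+1}-q^{-a-1})/(q-q^{-1})$), so that the symmetrization admits a section on $\bfR^{[A\circ B]^\fc}$; and (b) the bare product map $(f,g)\mapsto g\cdot\iota_d s_{d-a,d}(f)$ (in the paper's coordinates, after reducing to $n=1$) already surjects onto $\bfR^{[A\circ B]^\fc}$, which the paper checks by expressing the generating power sums of $\bfR^{[A\circ B]^\fc}$ explicitly as such products. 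Point (b) contains the type-C phenomenon you never mention: the twist $\iota_d\colon x_d\mapsto x_d^{-1}$ in $W_\fc$ appears because the base point $(F,F')$ of the non-closed orbit $\calO_A$ satisfies $F'=\iota_d F$; this twist is absent from Vasserot's type-A argument and is precisely what makes the $i=n$ case nontrivial here.
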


\begin{proof}
    The case of $i\neq n$ can be proved exactly the same as in \cite[Example, p.280]{V98}. We prove it for $i=n$. It suffices to show the case for $n=1$. Pick a non-negative integer $a$, such that $a+1\leq d$. Put 
    \[A=\bigg(\begin{matrix}
        d-a & a\\
        a & d-a
    \end{matrix} \bigg),\quad 
    B=\bigg(\begin{matrix}
        d-1 & 1\\ 
        1 & d-1
    \end{matrix} \bigg).\]
    By definition, 
    \[\calO_A=\{(V_1,V_1')\mid V_1=V_1^\perp, V'_1=(V'_1)^\perp, \dim (V_1\cap V_1')=d-a\}.\]
    Thus, 
    \begin{align*}
        p_{12}^{-1}(\calO_A)\cap p_{23}^{-1}(\calO_B)&= \{(V_1,V_1',V_1'')\mid V_1=V_1^\perp, V'_1=(V'_1)^\perp, V''_1=(V''_1)^\perp,\\
        & \dim (V_1\cap V_1')=d-a, \dim (V'_1\cap V''_1)=d-1\}.
    \end{align*}
    
    Let us first show that for any $(V_1,V_1',V_1'')\in p_{12}^{-1}(\calO_A)\cap p_{23}^{-1}(\calO_B)$, $\dim(V_1\cap V_1'')\geq d-a-1$. If $V_1\cap V_1'\subset V_1'\cap V_1''$, then $V_1\cap V_1'\cap V_1''=V_1\cap V_1'$, hence $\dim (V_1\cap V_1'\cap V_1'')=d-a$. Now let us assume $V_1\cap V_1'\not\subset V_1'\cap V_1''$. Let $w_1,\ldots, w_{d-1}$ be a basis for the $d-1$ dimensional vector space $V_1'\cap V_1''$. Then there exists a nonzero vector $u\in V_1\cap V_1'$, $u\notin V_1'\cap V_1''$. Therefore, $V_1'=\Span\{w_1,\ldots,w_{d-1},u\}=(V_1'\cap V_1'')\oplus \mathbb{C}u$. Intersecting with $V_1$ and since $u\in V_1$, we get $V_1\cap V_1'=(V_1\cap V_1'\cap V_1'')\oplus \mathbb{C}u$, which implies that $\dim (V_1\cap V_1'\cap V_1'')=d-a-1$. Hence, in both cases, $\dim(V_1\cap V_1'')\geq \dim  (V_1\cap V_1'\cap V_1'')\geq d-a-1$, and it is easy to construct an example where both equalities hold.  
    
    Hence, by the definition of $A\circ B$ in Proposition \ref{prop:orbit} and the Bruhat order in Equation \eqref{equ:order}, 
    \[A\circ B=\bigg(\begin{matrix}
        d-a-1 & a+1\\
        a+1 & d-a-1
    \end{matrix} \bigg).\]
    The lemma follows from the following claim:
    \begin{claim}\label{claim:AB}
        The composition
        \[\star:\underline{\bfR}^{[A]^\fc}\otimes \underline{\bfR}^{[B]^\fc}\stackrel{\star}{\longrightarrow}\underline{K}^{G\times\bbC^*}(\calZ_{\preceq A\circ B})\twoheadrightarrow  \underline{\bfR}^{[A\circ B]^\fc}\]
        is surjective, where the first map is convolution and the second map is the quotient by $\underline{K}^{G\times\bbC^*}(\calZ_{\prec A\circ B})$.
    \end{claim}
    Let $\calO_T:=p_{12}^{-1}(\calO_A)\cap p_{23}^{-1}(\calO_B)\cap p_{13}^{-1}(\calO_{A\circ B})$, and let $p_{13,T}$ be the restriction of $p_{13}$ to $\calO_T$. Let us analyze the fiber of $p_{13,T}$. Fix $(V_1,V_1'')\in \calO_{A\circ B}$, and choose any $(V_1,V_1',V_1'')\in p_{13,T}^{-1}(V_1,V_1'')$. Let us first show that $V_1'$ is determined by $V_1'\cap V_1''$. By the argument before Claim \ref{claim:AB}, $V_1\cap V_1'\cap V_1''=V_1\cap V_1''$ has dimension $d-a-1$, say, with a basis $\{w_1,\ldots, w_{d-a-1}\}$. Extend this to a basis $\{w_1,\ldots,w_{d-1}\}$ for $V_1'\cap V_1''$. We show that these will determine $V_1'$ uniquely. Extend the basis of  $V_1'\cap V_1''$ to a basis $\{w_1,\ldots, w_d\}$ for $V_1''$, and assume $(V_1'\cap V_1'')^\perp=V_1'+V_1''=\Span\{w_1,\ldots, w_{d-1},w_d,w_d'\}$ for some nonzero vector $w_d'$. Assume such $V_1'$ is not unique, and let $V_{1,a}'$ and $V_{1,b}'$ be two such $V_1'$'s. Then $V_{1,a}'=\Span\{w_1,\ldots, w_{d-1},u\}$ and $V_{1,b}'=\Span\{w_1,\ldots, w_{d-1},v\}$ for some nonzero $u\in V_1\cap V_{1,a}'$ and $v\in V_1\cap V_{1,b}'$. Moreover, $u,v\notin V_1'\cap V_1''$. Since $u,v\in (V_1'\cap V_1'')^\perp = V_1'+V_1''=\Span\{w_1,\ldots, w_{d-1},w_d,w_d'\}$, and $u,v\notin V_1'\cap V_1''$, the coefficients of $w_d'$ in both $u$ and $v$ are nonzero. Up to rescaling we can assume $u=\sum_{i=1}^d x_iw_i+w_d'$ and $v=\sum_{i=1}^d y_i w_i+w_d'$. But since $V_{1,a}'=\Span\{w_1,\ldots, w_{d-1},u\}\neq V_{1,b}'=\Span\{w_1,\ldots, w_{d-1},v\}$, we get $x_d\neq y_d$. Since $u,v\in V_1$, we have $u-v\in V_1$. Also, $u-v=\sum_{i=1}^d(x_i-y_i)w_i\in V_1''$. Hence, $u-v\in V_1\cap V_1''=  \Span\{w_1,\ldots, w_{d-a-1}\}$, contradicting with the fact that the coefficient of $w_d$ in $u-v$ is $x_d-y_d\neq 0$. Hence, we conclude that $V_1'$ is uniquely determined by $V_1'\cap V_1''$. Note that $V_1\cap V_1''=V_1\cap V_1'\cap V_1''\subset V_1'\cap V_1''$. Thus, the fiber $p_{13,T}^{-1}(V_1,V_1'')$ is isomorphic to $\Gr(a, \frac{V_1''}{V_1\cap V_1''})\simeq \Gr(a,a+1)$.  
    
    To compute the composition in Claim \ref{claim:AB}, we need to compute the pushforward map $p_{13,T*}$. We do this by the localization theorem, and we will use the notation from the proof of Proposition \ref{prop:pushforward}. Let $\pt_T:=(V_1,V_1',V_1'')$ be the base point in $\calO_T$, where $V_1=\Span(\epsilon_1,\dots,\epsilon_{d-a-1},\epsilon_{2d-a},\dots,\epsilon_{2d})$, $V_1'=\Span(\epsilon_1,\dots,\epsilon_{d-1},\epsilon_{2d})$, and $V_1''=\Span(\epsilon_1,\dots,\epsilon_{d})$. Then the point $(V_1',V_1'')$ is the base point $\pt_B$ in the orbit $\calO_B$. However, we have
    \[
    (V_1,V_1')=(\iota_d)s_{d-a,d}(\pt_A),
    \]
    where $\iota_d$ is the Weyl group element defined in \S\ref{subsec:push} and $s_{d-a,d}\in S_d\subset W_\fc$ is the transposition between $d-a$ and $d$. The torus fixed points in the fiber $p_{13,T}^{-1}(V_1,V_1'')$ are $(V_1,V_{1,j}',V_1'')$, where $V_{1,j}'$ is spanned by $\{\epsilon_1,\dots,\epsilon_{d},e_{d+j}\}\setminus\{e_j\}$, where $d-a\leq j\leq d$. Thus, $(V_{1,j}',V_1'')=s_{j,d}(\pt_B)$, and $(V_1,V_{1,j}')=s_{j,d}(V_1,V_1')$. Moreover, the restriction of the relative tangent bundle of $p_{13,T}$ to the base point $\pt_T$ has weights $x_d/x_j$, for $d-a\leq j\leq d-1$. Therefore, the composition in Claim \ref{claim:AB} is given by (see \cite[Corollary 3]{V98})
    \begin{align*}
        f\star g 
        &=\sum_{d-a\leq k\leq d}s_{k,d}\bigg(g\cdot \iota_d s_{d-a,d}(f)\cdot \prod_{d-a\leq j\leq d-1}\frac{1-q^2\frac{x_d}{x_j}}{1-\frac{x_j}{x_d}}\bigg)\\
        &= (-1)^a\sum_{d-a\leq k\leq d}s_{k,d}\bigg(gx_d^a\cdot \iota_ds_{d-a,d}(f\prod_{d-a+1\leq j\leq d}x_j^{-1})\cdot \prod_{d-a\leq j\leq d-1}\frac{1-q^2\frac{x_d}{x_j}}{1-\frac{x_d}{x_j}}\bigg).
    \end{align*}
    Note that
    \begin{align*}
    \bfR^{[A]^\fc}(q) &\simeq\bbC(q)[x_1^{\pm 1},\ldots, x_{d-a}^{\pm 1}]^{S_{d-a}}\otimes \bbC(q)[x_{d-a+1}^{\pm 1},\ldots, x_{d}^{\pm 1}]^{S_a},
    \\
    \bfR^{[B]^\fc}(q) &\simeq \bbC(q)[x_1^{\pm 1},\ldots, x_{d-1}^{\pm 1}]^{S_{d-1}}\otimes \bbC(q)[x_{d}^{\pm 1}],
    \end{align*}
    and 
    \[
    \bfR^{[A\circ B]^\fc}(q)\simeq\bbC(q)[x_1^{\pm 1},\ldots, x_{d-a-1}^{\pm 1}]^{S_{d-a-1}}\otimes \bbC(q)[x_{d-a}^{\pm 1},\ldots, x_{d}^{\pm 1}]^{S_{a+1}}.
    \]
    For any $h\in \bfR^{[A\circ B]^\fc}(q)$, we have
    \[
    \sum_{d-a\leq k\leq d}s_{k,d}\bigg(h\cdot \prod_{d-a\leq j\leq d-1}\frac{1-q^2 x_d/x_j}{1- x_d /x_j}\bigg) =h(1+q^2+\ldots +q^{2a})=hq^a\frac{q^{a+1}-q^{-a-1}}{q-q^{-1}}.
    \]
    Therefore, to prove Claim \ref{claim:AB}, it is enough to observe that the following map 
    \[
    \bfR^{[A]^\fc}(q)\otimes \bfR^{[B]^\fc}(q)\longrightarrow\bfR^{[A\circ B]^\fc}(q)
    \]
    \[f\otimes g\mapsto g\cdot\iota_ds_{d-a,d}(f), 
    \]
    is surjective. This holds since for any $m\in \bbZ$,
    \[
    \sum_{1\leq i\leq d-a-1}x_i^m=\iota_d s_{d-a,d}\bigg(\sum_{1\leq i\leq d-a}x_i^m\bigg)\cdot 1-1\cdot x_d^{-m},
    \]
    and 
    \[\sum_{d-a\leq i\leq d}x_i^m=\iota_d s_{d-a,d}\bigg(\sum_{d-a+1\leq i\leq d}x_i^m\bigg)\cdot 1+1\cdot x_d^m.\] 
    This finishes the proof of the lemma.
\end{proof}

\subsection{Convolution with generating elements}

Let us compute the convolution products with some distinguished elements. 

\begin{prop}\label{prop:diagconv}
	Let $A=\diag(\bfv)$, for $\bfv\in \Lambda_\fc$. Then $A\circ B=B$ for any $B\in \Xi_d(\bfv,\bfw)$. Moreover, we have $f\star g=fg$ for any $f\in \underline{\bfR}^{[A]^{\fc}}$ and $g\in \underline{\bfR}^{[B]^{\fc}}$, where we regard $f \in \underline{\bfR}^{[B]^{\fc}}$ by the natural inclusion $\underline{\bfR}^{[A]^{\fc}} \subset \underline{\bfR}^{[B]^{\fc}}$.
\end{prop}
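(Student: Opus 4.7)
My plan has two steps: first verify the orbit identity $A\circ B=B$, and then derive the convolution formula from the fact that $p_{13}$ restricts to an isomorphism on the relevant intersection.

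Since $A=\diag(\bfv)$, the orbit $\calO_A\subseteq \calF_\bfv\times\calF_\bfv$ is the diagonal copy of $\calF_\bfv$. For any $B\in\Xi_d(\bfv,\bfw)$, the intersection $p_{12}^{-1}(\calO_A)\cap p_{23}^{-1}(\calO_B)\subseteq \calF_\bfv\times\calF_\bfv\times\calF_\bfw$ consists of the triples $(F,F,F')$ with $(F,F')\in\calO_B$, and is carried isomorphically onto $\calO_B$ by $p_{13}$. Consequently $\Xi_d(A,B)=\{B\}$, and Proposition~\ref{prop:orbit}(3) gives $A\circ B=B$.

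This observation lifts to the conormal bundles: under the canonical identification $\calZ_A\cong T^{*}\calF_\bfv$ of the conormal bundle to the diagonal, the map $p_{13}$ restricts to an isomorphism $p_{12}^{-1}(\calZ_A)\cap p_{23}^{-1}(\calZ_B)\xrightarrow{\sim}\calZ_B$. Because this restricted $p_{13}$ has zero relative dimension, no nontrivial Thom factor enters the pushforward $p_{13*}$, and the convolution $f\star g=p_{13*}(p_{12}^{*}f\otimes p_{23}^{*}g)$ reduces to the product $(p_{1,B}^{*}f)\cdot g$ on $\calZ_B$, where $p_{1,B}:\calO_B\to\calF_\bfv$ is the first projection. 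Under the identifications of Proposition~\ref{prop:pushforward}, the natural inclusion $\underline{\bfR}^{[A]^\fc}\hookrightarrow \underline{\bfR}^{[B]^\fc}$ in the statement is precisely this pullback $p_{1,B}^{*}$.

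The main technical obstacle is the careful bookkeeping of Thom isomorphisms and of the base-point identifications behind $K^{G\times\C^{*}}(\calF_\bfv)\cong \underline{\bfR}^{[\bfv]^\fc}$ and $K^{G\times\C^{*}}(\calO_B)\cong \underline{\bfR}^{[B]^\fc}$, since the two base points need not agree on the nose. The cleanest way to bypass this is via the localization formula \eqref{equ:localization}: at each torus fixed point $\pt\in\calO_B^{T}$, both $(f\star g)|_\pt$ and $(p_{1,B}^{*}f\cdot g)|_\pt$ evaluate to $f|_{p_{1,B}(\pt)}\cdot g|_\pt$, since the fiber of $p_{13}$ over $\pt$ is a single reduced point contributing no weight denominators. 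Taking $W$-invariants then yields the asserted identity in $\underline{\bfR}^{[B]^\fc}$.
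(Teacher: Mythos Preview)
Your proposal is correct and follows essentially the same approach as the paper: both arguments reduce to the observation that $\calO_A$ is the diagonal, so $p_{13}$ restricts to an isomorphism from $p_{12}^{-1}(\calO_A)\cap p_{23}^{-1}(\calO_B)$ onto $\calO_B$, whence $A\circ B=B$ and the convolution has no nontrivial pushforward. The paper then simply invokes \cite[Corollary~3]{V98} for the product formula, whereas you spell out the same content via localization and the base-point bookkeeping; your extra detail is accurate and not a different method.
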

\begin{proof}
	Since $A$ is a diagonal matrix, the orbit $\calO_A$ is the diagonal copy of $\calF_{\ro(A)}$. Thus, the restriction of $p_{13}$ to $p_{12}^{-1}(\calO_A)\cap p_{23}^{-1}(\calO_{B})$ is an isomorphism. The proposition follows from this fact and  \cite[Corollary 3]{V98}.
\end{proof}

Let $C\in \Xi_d(\bfv,\mathbf{w}) $. Put \[(h,l):=\min\{(i,j)\mid 1\leq j< i\leq N, c_{ij}\neq 0\},\]
with respect to the right-lexicographic order \eqref{equ:rightorder}. Let 
\[B=C-c_{hl}(E_{h,l}^\theta-E_{h-1,l}^\theta),\quad \text{and}\quad A=E^\theta_{h,h-1}(\bfv-c_{hl}\bfe_h-c_{hl}\bfe_{N+1-h},c_{hl}).\]
Then $\co(A)=\ro(B)$ since for any $j$,
\begin{align*}
	\sum_{i}a_{ij} &= a_{jj}+\delta_{j,h-1}c_{h,l}+\delta_{j,N+2-h}c_{hl}\\   
	&= v_j-\delta_{j,h}c_{hl}-\delta_{j,N+1-h}c_{hl}+\delta_{j,h-1}c_{h,l}+\delta_{j,N+2-h}c_{hl}\\
	&= \sum_i c_{ji}-\delta_{j,h}c_{hl}-\delta_{j,N+1-h}c_{hl}+\delta_{j,h-1}c_{h,l}+\delta_{j,N+2-h}c_{hl}
	=\sum_i b_{ji}.
\end{align*}
Let $\mathcal{O}:=p_{12}^{-1}(\mathcal{O}_A)\cap p_{23}^{-1}(\mathcal{O}_B)\cap p_{13}^{-1}(\mathcal{O}_C)$. Then
we have the following proposition.
\begin{prop}\label{prop:AB}
	Let $A,B,C$ be as above. Then $A\circ B=C$, and $p_{13}|_{\mathcal{O}}:\calO\rightarrow \calO_{C}$ is an isomorphism.
\end{prop}
\begin{proof}
    The case of $h\neq n+1$ can be proved as in \cite[Proposition 8]{V98}. Let us show the case $h=n+1$. To simplify notations, we let $a$ denote $c_{n+1,l}$. Pick $(F,F',F'')\in p_{12}^{-1}(\mathcal{O}_A)\cap p_{23}^{-1}(\mathcal{O}_B)$, and assume $(F,F'')\in \calO_{C'}$ for some matrix $C'$. Recall 
	\begin{align*} 
		\mathcal{O}_{A}=& \bigg\{(F,F')\mid 
		F=(V_k)_{0\le k \le N}\in \mcal{F}_{\bfv}, F'=(V'_k)_{0\le k \le N}\in \mcal{F}_{\bfv},\\
		&
		V_n\cap V'_n \stackrel{a}{\subset}V_n, V_k=V'_k \textit{ if } k\neq n\bigg\}.
	\end{align*}
	Hence, $c'_{ij}=b_{ij}$ if $i\neq n,n+1$. Let us assume that, for $1\leq j\leq n$,
	\[\dim (V_n'\cap V_j'')- \dim (V_n\cap V_j'')=s_1+\ldots+s_j\]
	for some integers $s_1,\ldots,s_n$. By Equation \eqref{equ:a}, for $1\leq j\leq n$,
	\begin{align*}
		c'_{n+1,j}-b_{n+1,j}=&\dim \frac{V_{n+1}\cap V_j''}{V_n\cap V_j''+V_{n+1}\cap V_{j-1}''}-\dim \frac{V_{n+1}\cap V_j''}{V'_n\cap V_j''+V_{n+1}\cap V_{j-1}''}\\
		=&\dim (V_n'\cap V_j'')- \dim (V_n\cap V_j'')-\dim (V_n'\cap V_{j-1}'')+ \dim (V_n\cap V_{j-1}'')\\
		=&s_j,
	\end{align*}
	and 
	\begin{align*}
		c'_{n,j}-b_{n,j}=&\dim \frac{V_{n}\cap V_j''}{V_{n-1}\cap V_j''+V_{n}\cap V_{j-1}''}-\dim \frac{V'_{n}\cap V_j''}{V_{n-1}\cap V_j''+V'_{n}\cap V_{j-1}''}
		=-s_j.
	\end{align*}
	Hence, 
	\[C'=B+\sum_{j=1}^n s_j(E_{n+1,j}^\theta-E_{n,j}^\theta).\]
	Since the entries of $C'$ must be nonnegative, and by the definition of $(n+1,l)$, we see that $s_1=\ldots=s_{l-1}=0$. By Lemma \ref{lem:Lagint} below,  $s_1+\ldots+s_j\leq a$ for any $1\leq j\leq n$. Hence, by the partial order defined in \eqref{equ:order}, 
	\[
    C'\preceq B+a(E_{n+1,l}^\theta-E_{n,l}^\theta)=C,
    \]
    while $C$ corresponds to the case $s_l=a$ and $s_j=0$ for $j\neq l$. Hence, $A\circ B=C$.
	
	Now let us prove the second statement. Let $(F,F',F'')\in \mathcal{O}$, then it is enough to show that $F'$ is uniquely determined by $(F,F'')$. Since $(F,F')\in \calO_A$, we only need to show that $V_n'$ is uniquely determined by $(F,F'')$. Recall the matrix $C$ corresponds to the choice $s_l=a$ and $s_j=0$ for $j\neq l$, we get 
    \[\dim (V_n'\cap V_l'')- \dim (V_n\cap V_l'')=a.\]
    Since $ \dim (V_n\cap V_n')=d-a$, Lemma \ref{lem:Lagint} below implies that $V_n'\subset V_n\cap V_n'+V_l''$. Intersecting with $V_n'$, we get $V_n'=V_n\cap V_n+V_n'\cap V_l''$.
    Since $b_{n+1,j}=0$ for $1\leq j\leq l$, we get by Equation \eqref{equ:a} that $\dim V_n'\cap V_l''=\dim V_{n+1}'\cap V_l''$. Hence, $V_n'\cap V_l''= V_{n+1}'\cap V_l''=V_{n+1}\cap V_l''$. Therefore,
    \begin{equation}\label{equ:Vn'}
        V_n'=V_n\cap V_n'+V_{n+1}\cap V_l''.
    \end{equation}
    Since $c_{ij}=c_{N+1-i,N+1-j}$ and by the definition of $(h,l)$, we get $c_{ij}=0$ if $1\leq i\leq n$ and $j>N+1-l$. Hence, $\dim V_n\cap V_{N+1-l}''=\dim V_n$, which implies $V_n=V_n\cap V_{N+1-l}''$. On the other hand, $b_{n+1,l}=0$ implies that $b_{n,N+1-l}=0$. Together with Equation \eqref{equ:a}, we get
    \[V_n'\cap V_{N+1-l}''=V_{n-1}\cap  V_{N+1-l}''+V_n'\cap V_{N-l}''.\]
    Intersecting with $V_n$, we get
    \begin{equation}\label{equ:VnVn'}
        V_n\cap V_n'=V_n\cap V_n'\cap V_{N+1-l}''=V_{n-1}\cap  V_{N+1-l}''+V_n\cap V_n'\cap V_{N-l}'',
    \end{equation}
    where the first equaltiy follows from $V_n=V_n\cap V_{N+1-l}''$. Finally, by Equation \eqref{equ:a} again,
    \[\dim V_n'\cap V_{N-l}''-\dim V_n\cap V_{N-l}''=a,\]
    which implies $V_n\cap V_n'\cap V_{N-l}''=V_n\cap V_{N-l}''$ by Lemma \ref{lem:Lagint}. Plugging this into Equations \eqref{equ:Vn'} and \eqref{equ:VnVn'}, we get
    \[V_n'=V_{n-1}\cap  V_{N+1-l}''+V_n\cap V_{N-l}''+V_{n+1}\cap V_l''.\]
    Thus, $V_n'$ is uniquely determined by $(F,F'')$. This finishes the proof.
\end{proof}    

The following lemma is used in the proof of Proposition \ref{prop:AB} above.

\begin{lemma}\label{lem:Lagint}
	Let $U_1,U_2$ be two Lagrangian subspaces of $V=\mathbb{C}^{2d}$ such that $\dim (U_1\cap U_2)=d-a$. Then for any subspace $U_3\subset V$, we have $\dim (U_1\cap U_3)-\dim (U_2\cap U_3)\leq a$, and the equality holds if and only if $U_1\subset U_1\cap U_2+U_3$ and $U_1\cap U_2\cap U_3=U_2\cap U_3$.
\end{lemma}

\begin{proof}
    First of all, $\dim (U_2\cap U_3)\geq \dim (U_1\cap U_2\cap U_3)$, and the equality holds if and only if $U_1\cap U_2\cap U_3=U_2\cap U_3$. On the other hand,
    \[\dim(U_1+U_3)\geq \dim(U_1\cap U_2+U_3)=d-a+\dim U_3-\dim (U_1\cap U_2\cap U_3),\]
    and the equality holds if and only if $U_1\subset U_1\cap U_2+U_3$. Hence, $\dim(U_1\cap U_3)=d+\dim U_3-\dim(U_1+U_3)\leq a+\dim (U_1\cap U_2\cap U_3)$. Therefore, $\dim (U_1\cap U_3)-\dim (U_2\cap U_3)\leq a$, and the equality holds if and only if $U_1\subset U_1\cap U_2+U_3$ and $U_1\cap U_2\cap U_3=U_2\cap U_3$.
\end{proof}

\subsection{A generating set}

We now describe a generating set for the convolution algebra $\underline{K}^{G\times\bbC^*}(\calZ)$. This is a generalization of \cite[Proposition 10]{V98}.

\begin{theorem}\label{thm:generators}
    The convolution algebra $\underline{K}^{G\times\bbC^*}(\calZ)$ is generated by $\underline{K}^{G\times\bbC^*}(\calZ_{\diag(\bfv)})$ for $\bfv\in \Lambda_{\fc,d}$, and $\underline{K}^{G\times\bbC^*}(\calZ_{E^\theta_{i,i+1}(\bfv',1)})$ for $\bfv'\in \Lambda_{\fc,d-1}$ and $1\leq i\leq N-1$.
\end{theorem}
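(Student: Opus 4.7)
The plan is to prove the theorem by induction on the \emph{off-diagonal weight}
\[
d(A) := \sum_{i,j} a_{ij}\,|i-j|,
\]
which vanishes if and only if $A$ is diagonal, combined with an inner induction on the partial order $\preceq$ at each level. The base case is when $A = \diag(\bfv)$ (equivalently $d(A) = 0$), in which case $\underline{K}^{G\times\bbC^*}(\calZ_{\diag(\bfv)})$ is itself one of the listed generators.

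For the inductive step, let $A \in \Xi_d$ be non-diagonal. The symmetry $a_{ij} = a_{N+1-i, N+1-j}$ pairs upper- and lower-triangular off-diagonal entries, so I can choose a column $l$ that is maximal among those carrying an off-diagonal entry, and within that column an index $h$ with $h < l$ and $a_{hl} > 0$. Such $h$ must exist: if column $l$ had only lower-triangular entries, by symmetry the mirror column $N+1-l > l$ would also carry an off-diagonal entry, contradicting the maximality of $l$. Set $a := a_{hl}$ and define
\[
E := E_{h,h+1}^{\theta}\!\big(\,\ro(A) - a(\bfe_h + \bfe_{N+1-h}),\, a\,\big),
\qquad
B := A - a\bigl(E_{h,l}^{\theta} - E_{h+1,l}^{\theta}\bigr).
\]
Direct verification confirms that $B \in \Xi_d$ has non-negative entries (the subtraction at the mirror position sends $a_{N+1-h,N+1-l} = a_{hl} = a$ to $0$) and satisfies the type $C$ symmetry; that $\co(E) = \ro(B)$ by construction; that $b_{h+1,l} = a_{h+1,l} + a \geq a$; that the maximality of $l$ forces $l = \max\{i : b_{h+1,i} \neq 0\}$; and that $b_{hl} = 0$ by the choice $a = a_{hl}$. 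The hypotheses of Proposition~\ref{prop:AB} are thus satisfied, giving $E \circ B = A$ and identifying the convolution product on the associated graded $\underline{\bfR}^{[E]^{\fc}} \star \underline{\bfR}^{[B]^{\fc}} \to \underline{\bfR}^{[A]^{\fc}}$ with ordinary multiplication in the ambient localized ring. A short computation yields $d(B) = d(A) - 2a < d(A)$.

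By Lemma~\ref{lem:Egenerators}, $\underline{K}^{G\times\bbC^*}(\calZ_{\preceq E})$ lies in the subalgebra generated by the listed classes (the case of general $a \geq 1$ reduces to $a=1$). By the inductive hypothesis on $d$, so does $\underline{K}^{G\times\bbC^*}(\calZ_{\preceq B})$. The required surjectivity of the multiplication map
\[
\underline{\bfR}^{[E]^{\fc}} \otimes \underline{\bfR}^{[B]^{\fc}} \twoheadrightarrow \underline{\bfR}^{[A]^{\fc}}
\]
then follows from the identification $W_{[A]^{\fc}} = W_{[E]^{\fc}} \cap W_{[B]^{\fc}}$, which rests crucially on $b_{hl} = 0$ (so that the set partitions $[E]^{\fc}$ and $[B]^{\fc}$ jointly refine $[A]^{\fc}$), together with the standard Galois-theoretic fact that over the localized base the product of two invariant subrings recovers the invariants of the intersection. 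The lower-order terms in $\underline{K}^{G\times\bbC^*}(\calZ_{\prec A})$ are handled by the inner induction on $\preceq$ at the current $d$-level. The main difficulty is the case $h = n$: there the orbit $\calO_{E_{n,n+1}^{\theta}(\bfv,a)}$ fails to be closed, so the entire argument must take place in localized equivariant K-theory---this is precisely what the localized statements of Lemma~\ref{lem:Egenerators} and Proposition~\ref{prop:AB} are designed to accommodate.
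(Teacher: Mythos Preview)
Your overall strategy mirrors the paper's---factor $A$ as $E\circ B$ via Proposition~\ref{prop:AB}, reduce the matrix $E$ to the $a=1$ case by Lemma~\ref{lem:Egenerators}, and induct---and your measure $d(A)=\sum a_{ij}|i-j|$ works just as well as the paper's $\ell(C)=\sum_{i<j}\binom{j-i+1}{2}c_{ij}$. There is, however, a genuine gap in your choice of $(h,l)$.

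You take $l$ to be the maximal column carrying \emph{any} off-diagonal entry and then assert that some $h<l$ with $a_{hl}>0$ exists, arguing that otherwise the mirror column $N+1-l$ would exceed $l$. But once $A$ is non-diagonal, symmetry forces the maximal such column to satisfy $l\ge n+1$, so in fact $N+1-l\le n<l$ and your contradiction never materializes. Concretely, with $N=4$ take $a_{12}=a_{43}=1$ and all other off-diagonal entries zero (this lies in $\Xi_d$); the maximal column with an off-diagonal entry is $l=3$, yet column~$3$ carries only the lower-triangular entry $a_{43}$, so no admissible $h$ exists. The paper avoids this by choosing $(h,l)$ to be maximal in the right-lexicographic order among pairs with $h<l$ and $a_{hl}\neq 0$---equivalently, taking $l$ maximal among columns with an \emph{upper}-triangular entry. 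With that correction your verification of the hypotheses of Proposition~\ref{prop:AB} and the drop $d(B)=d(A)-2a$ go through.

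A second, smaller point: your surjectivity of $\underline{\bfR}^{[E]^{\fc}}\otimes\underline{\bfR}^{[B]^{\fc}}\to\underline{\bfR}^{[A]^{\fc}}$ via ``$W_{[A]^{\fc}}=W_{[E]^{\fc}}\cap W_{[B]^{\fc}}$ plus a standard Galois-theoretic fact'' is too quick. The statement that invariants under an intersection of Young subgroups are generated by the two invariant subrings is not a general Galois fact; it holds here only because of the very special shape of the partitions (one block of $[E]^{\fc}$ matches a block of $[A]^{\fc}$, while the relevant block of $[B]^{\fc}$ is the disjoint union of two blocks of $[A]^{\fc}$, so one is reduced to the elementary observation that symmetric functions in $I_1$ together with symmetric functions in $I_1\sqcup I_2$ generate symmetric functions in $I_1$ and in $I_2$ separately). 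The paper makes exactly this reduction explicit rather than invoking an abstract principle.
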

\begin{proof}
    It suffices to prove the statement for the associated graded of $\underline{K}^{G\times\bbC^*}(\calZ)$. For a matrix $C\in \Xi_d(\bfv,\mathbf{w})$, let
    \[\ell(C):=\sum_{i>j}\begin{pmatrix}
        i-j+1\\
        2
    \end{pmatrix}c_{ij}.\]
    We will prove by induction on $\ell(C)$ that, modulo the lower graded piece $\underline{K}^{G\times\bbC^*}(\calZ_{\prec C})$, $\underline{K}^{G\times\bbC^*}(\calZ_{\preceq C})$ can be obtained via convolution from classes in $\underline{K}^{G\times\bbC^*}(\calZ_A)$ such that $A\in \Xi_d$ is a diagonal matrix or a matrix of type $E^\theta_{i,i+1}(\bfv',1)$ for $1\leq i\leq N-1$. 

    If $\ell(C)=0$, then $C$ is a diagonal matrix. If $\ell(C)=1$, then $C=E^\theta_{i,i+1}(\bfv',1)$ for some $i$. In both cases, there is nothing to show. Suppose $\ell(C)>1$. Put 
    \[(h,l):=\min\{(i,j)\mid 1\leq j< i\leq N, c_{ij}\neq 0\},\]
    with respect to the right-lexicographic order \eqref{equ:rightorder}, and let 
    \[
    B=C-c_{hl}(E_{h,l}^\theta-E_{h-1,l}^\theta),\quad \text{and}\quad A=E^\theta_{h,h-1}(\bfv-c_{hl}\bfe_h-c_{hl}\bfe_{N+1-h},c_{hl}),
    \]
    be as in the setting of Proposition \ref{prop:AB} above. Hence, $A\circ B=C$. Combining with \cite[Corollary 3]{V98}, we get that for any $f\in \underline{\bfR}^{[A]^\fc}$ and $g\in \underline{\bfR}^{[B]^\fc}$ with lifts $\tilde{f}\in \underline{K}^{G\times\bbC^*}(\calZ_{\preceq A})$ and $\tilde{g}\in \underline{K}^{G\times\bbC^*}(\calZ_{\preceq B})$, $\tilde{f}\star \tilde{g}= fg\in \underline{\bfR}^{[C]^\fc}$ modulo $\underline{K}^{G\times\bbC^*}(\calZ_{\prec C})$. 

    We show that the map $\star:\underline{K}^{G\times\bbC^*}(\calZ_{\preceq A})\otimes \underline{K}^{G\times\bbC^*}(\calZ_{\preceq B})\rightarrow \underline{K}^{G\times\bbC^*}(\calZ_{\preceq C})$ is surjective. The case $h\neq n+1$ can be shown exactly the same as in \cite[Proposition 10]{V98}. For the case of $h=n+1$, the corresponding surjectivity follows from the surjectivity of the map
    \[\underline{\bfR}^{S_{[A]_{n+1,n}}}\otimes \underline{\bfR}^{S_{[B]_{n,l}}}\longrightarrow \underline{\bfR}^{S_{[C]_{n+1,l}}}\otimes \underline{\bfR}^{S_{[C]_{n,l}}}, 
    \quad
    f\otimes g\mapsto i(f)j(g),\]
    where $i$ is the isomorphism $\underline{\bfR}^{S_{[A]_{n+1,n}}}\simeq \underline{\bfR}^{S_{[C]_{n+1,l}}}$ and $j: \underline{\bfR}^{S_{[B]_{n,l}}}\hookrightarrow \underline{\bfR}^{S_{[C]_{n+1,l}}}\otimes \underline{\bfR}^{S_{[C]_{n,l}}}$ is the inclusion. (Note that $a_{n+1,n}=c_{n+1,l}$ and $b_{n,l}=c_{n+1,l}+c_{n,l}$.)
    Notice that $\ell(B)<\ell(C)$. Finally, we use Lemma \ref{lem:Egenerators} to conclude the proof.
\end{proof}

\section{A polynomial representation of the affine iquantum group $\tUi$}
\label{sec:iquantum}	

In this section, we formulate a representation of the iquantum group associated with the quasi-split Satake diagram of affine type $\AIII_{2n-1}^{(\tau)}$; the proofs will be completed in Section~\ref{sec:proof} and Appendix~\ref{App:A}. To that end, it is essential for us to use a variant of the Drinfeld presentation of this iquantum group obtained in \cite{LWZ24}.

\subsection{The Drinfeld presentation}
	
 Let $(c_{ij})_{i,j\in \I}$ be the Cartan matrix of affine type $A_{2n-1}$, where 
 \[
 \I = \I_0 \cup \{0\}, \qquad
  \I_0 =\{1, 2,\ldots, 2n-1\},
 \]
 with the affine node $0$. Let $\tau$ be the diagram involution such that $\tau (0)=0,\tau(i)=2n-i$, for $i\in \I_0$. Then $(\I,\tau)$ represents a quasi-split Satake diagram below:
	\begin{center}
		\begin{tikzpicture}[baseline=0, scale=5]
  \put(-200,-1.4){${\rm AIII}_{1}^{(\tau)}$}
			\node at (-0.5,-0.015) {$\circ$};
			\node at (-0.14,-0.015) {$\circ$};

			\draw[-](-0.48, 0) to (-0.16, 0);
			\draw[-](-0.48, -0.025) to (-0.16, -0.025);
			\node at (-0.14, -.06) { \tiny $1$};
			\node at (-0.5,-.06) {\tiny $0$};
		\end{tikzpicture}
	\end{center}

\begin{center}
\centering
\begin{picture}(70,35)(0,5)
                   \put(-150,20){${\rm AIII}_{2n-1}^{(\tau)}\,(n\ge 2)$}
   
				\put(0,9){$\circ$}
				\put(0,29){$\circ$}
				\put(47,9){$\circ$}
				\put(47,29){$\circ$}
				\put(72,9){$\circ$}
				\put(72,29){$\circ$}
				\put(94,19){$\circ$}
				\put(-11,2.8){\tiny ${2n-1}$}
				\put(-2,36){\tiny ${1}$}
				\put(64,2.8){\tiny $n+1$}
				\put(69,36){\tiny $n-1$}
				\put(94,14){\tiny $n$}
				
				\put(5,11.5){\line(1,0){13}}
				\put(5,31.5){\line(1,0){13}}
				\put(19,9){$\cdots$}
				\put(19,29){$\cdots$}
				\put(33.5,11.5){\line(1,0){13}}
				\put(33.5,31.5){\line(1,0){13}}
				\put(53,11.5){\line(1,0){18.5}}
				\put(53,31.5){\line(1,0){18.5}}
				\put(-17.5,22.5){\line(2,1){17}}
				\put(-17.5,20.5){\line(2,-1){17}}
				\put(78,12){\line(2,1){17}}
				\put(78,31){\line(2,-1){17}}
				\put(-23,19){$\circ$}
				\put(-23,11){\tiny $0$}
				
				\color{red}
				\qbezier(0,13.5)(-4,21.5)(0,29.5)
				\put(-0.25,14){\vector(1,-2){0.5}}
				\put(-0.25,29){\vector(1,2){0.5}}
				
				\qbezier(47,13.5)(43,21.5)(47,29.5)
				\put(46.75,14){\vector(1,-2){0.5}}
				\put(46.75,29){\vector(1,2){0.5}}

				\qbezier(72,13.5)(68,21.5)(72,29.5)
				\put(71.75,14){\vector(1,-2){0.5}}
				\put(71.75,29){\vector(1,2){0.5}}
			\end{picture}
\end{center}

Denote by $[k]$ and $\qbinom{k}{r}$ the $q$-integers and $q$-binomials, for $k,r\in \mbb N$. 

The (universal) quasi-split affine iquantum group of type AIII$_{2n-1}^{(\tau)}$ is the $\C(q)$-algebra $\tUi =\tUi(\widehat{\fg})$ generated by $B_i$, $\K_i^{\pm 1}$ $(i\in \I)$, subject to the following relations \eqref{eq:SK}--\eqref{eq:S3}, for $i, j\in \I$:
	\begin{align}
		\K_i\K_i^{-1} =\K_i^{-1}\K_i=1,  \quad \K_i\K_\ell &=\K_\ell \K_i, \quad
		\K_\ell B_i=q^{c_{\tau \ell,i} -c_{\ell i}} B_i \K_\ell,
		\label{eq:SK} \\
		B_iB_j -B_j B_i&=0, \qquad\qquad\qquad \text{ if } c_{ij}=0 \text{ and }\tau i\neq j,
		\label{eq:S1} \\
		\sum_{s=0}^{1-c_{ij}} (-1)^s \qbinom{1-c_{ij}}{s} & B_i^{s}B_jB_i^{1-c_{ij}-s} =0, \quad \text{ if } \tau i\neq i \text{ and } j \not\in \{i,\tau i\},
		\label{eq:S6}
		\\
		B_{\tau i}B_i -B_i B_{\tau i}& =   \frac{\K_i -\K_{\tau i}}{q-q^{-1}},
		\quad \text{ if }  c_{i,\tau i}=0,
		\label{relation5}	\\
		B_i^2 B_j -[2] B_i B_j B_i +B_j B_i^2 &= - q^{-1}  B_j \K_i,  \quad \text{if }c_{ij}=-1 \text{ and }c_{i,\tau i}=2,
		\label{eq:S2}
  \\
		\sum_{r=0}^3 (-1)^r \qbinom{3}{r} B_i^{3-r} B_j B_i^{r} &= -q^{-1} [2]^2 (B_iB_j-B_jB_i) \K_i, 
		\quad \text{if } -c_{ij}= c_{i,\tau i}=2.  \label{eq:S3}
	\end{align}
The above formulation of universal iquantum groups was due to Ming Lu and the second author and originated from iHall algebras, where parameters in Letzter-Kolb's iquantum groups are replaced by additional Cartan generators $\K_i$'s (see \cite{LW21, LWZ24} for references therein).

The Serre relation \eqref{eq:S3} arises only in the iquantum group of type AIII$_1^{(\tau)}$, which is also known as the $q$-Onsager algebra.

We introduce the following generating functions for an indeterminate $z$: 
\begin{align}
\label{GF}
	\begin{split}
		\bB_{i}(z) & =\sum_{k\in\bbZ} B_{i,k}z^{k}, 
		\\
		\bTh_{i}(z) &=1+ \sum_{k\ge 1} (q-q^{-1})\Theta_{i,k}z^{k} =\sum_{k\in\bbZ}(q-q^{-1})\Theta_{i,k}z^{k}=\exp\big((q-q^{-1})\bH_i(z) \big),
		\\
		\bH_i(z) &=\sum_{k\ge 1} H_{i,k} z^{k},\\
		\bDel(z) & =\sum_{k\in\bbZ} C^k z^k,
		\quad
		\bDel^+(z)=\sum_{k\geq0} C^k z^k.
	\end{split}
\end{align}  
Here $\Theta_{i,0}=\frac{1}{q-q^{-1}}$, $\Theta_{i,k}=0$ for $k<0$.
Define 
\begin{align*}
	&\mathbb{S}_{i,j}(w_1,w_2| z)\\
	&:= \Sym_{w_1,w_2}\big(\bB_i(w_1)\bB_i(w_2)\bB_j(z)-[2]\bB_i(w_1)\bB_j(z)\bB_i(w_2)+\bB_j(z)\bB_i(w_1)\bB_i(w_2)\big),
\end{align*}
where it is understood that $\Sym_{w_1,w_2} f(w_1,w_2) := f(w_1,w_2) +f(w_2,w_1).$ We shall denote by $[x,y]_v=xy-vyx$ the $v$-commutator below. 

The Drinfeld type current presentations for $\tUi$ have been obtained in \cite{LW21} for $n=1$ and in \cite{LWZ24} for general $n\ge 2$. 

\begin{theorem} \cite[Theorem 4.6]{LWZ24}  \label{thm:drinfeld}
	The quasi-split affine iquantum group $\tUi$ is isomorphic to the
	$\C(q)$-algebra generated by the elements $B_{i,\ell}$, $\Theta_{i,m}$, $\K_i^{\pm 1}$, and $C^{\pm 1}$, where $i\in\I_0$, $\ell\in\Z$ and $m\ge 1$, subject
	to the following relations \eqref{R-comm}--\eqref{serre}, for $i,j\in \I_0$:
	\begin{align}
		C \textit{ is central}, & \qquad [\K_i,\K_j]=[\K_i, \bTh_j(w)] = 0,\label{R-comm}
		\\
		\bTh_i(z)\bTh_j(w)&= \bTh_j(w)\bTh_i(z), 
		\\
		\K_i\bB_{j}(w) &= q^{c_{\tau i,j}-c_{ij}} \bB_{j}(w) \K_i, 
		\\
		\bB_j(w)  \bTh_i(z)
		&= \frac{(1 -q^{c_{ij}}zw^{-1}) (1 -q^{-c_{\tau i,j}}zw C)}{(1 -q^{-c_{ij}}zw^{-1})(1 -q^{c_{\tau i,j}} zw C)}
		\bTh_i(z) \bB_j(w), \label{R-BTh}
		\\
		[\bB_i(z), \bB_{\tau i}(w)] 
		&= \frac{\bDel (zw)}{q-q^{-1}} (\K_{\tau i} \bTh_i(z) -\K_{i} \bTh_{\tau i}(w)),  \qquad \text{ if } c_{i,\tau i}=0, 
		\\
		(q^{c_{ij}}z -w) \bB_i(z) & \bB_j(w) +(q^{c_{ij}}w-z) \bB_j(w) \bB_i(z)=0, \quad \text{ if } j \neq  \tau i,  
		\\
		(q^2z-w) \bB_i(z) & \bB_i(w) +(q^{2}w-z) \bB_i(w) \bB_i(z)\label{R-BiBi}
		\\ 
		= \frac{\bDel(zw)\K_{i}}{q-q^{-1}}  & \big( (z -q^{-2}w)\bTh_i(w) +(w -q^{-2}z)\bTh_i(z) \big), \quad \text{ if } i =\tau i, \notag
	\end{align}
	and the Serre relations:
 \begin{align}
		&\mathbb{S}_{i,j}(w_1,w_2| z)=0, \textit{\quad if } c_{ij}=-1, j\neq \tau i\neq i, \\
		& \label{serre}
  \mathbb{S}_{i,j}(w_1,w_2| z) = -\K_i\frac{\bDel(w_1w_2)}{q-q^{-1}}\Sym_{w_1,w_2}\bigg( \frac{[2] z w_1^{-1} }{1-q^{2}w_2w_1^{-1}}  [\bTh_i(w_2),\bB_{j}(z)]_{q^{-2}} \\
		&	+\frac{1 +w_2w_1^{-1}}{1 -q^{2}w_2w_1^{-1}}[\bB_{j}(z),\bTh_i(w_2)]_{q^{-2}}  \bigg),\textit{ if } c_{ij}=-1,i=\tau i.\notag
	\end{align}
\end{theorem}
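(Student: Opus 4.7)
The plan is to establish the isomorphism by constructing a map from the Drinfeld-presented algebra $\calD$ (generated by $B_{i,\ell}, \Theta_{i,m}, \K_i^{\pm 1}, C^{\pm 1}$ modulo \eqref{R-comm}--\eqref{serre}) into $\tUi$ and then matching PBW-type bases on the two sides. First I would define the images of the current generators in $\tUi$ using the relative braid group action on $\tUi$: the real root vectors $B_{i,\ell}$ arise as $T_\omega^\ell(B_i)$ for a suitable translation element $\omega$ in the extended affine Weyl group, and the imaginary root vectors $\Theta_{i,m}$ (equivalently $H_{i,m}$ via the logarithmic relation in \eqref{GF}) are extracted from commutators of the shape $[B_{i,m},B_{\tau i,0}]$ with appropriate scalar corrections, following the pattern established in \cite{LW21, Z22} for the split case.

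Second, I would verify each family of current relations \eqref{R-comm}--\eqref{serre} in $\tUi$. The Cartan and commutation relations follow from the $\K$-equivariance of the braid operators together with centrality of $C$. The quadratic current relations for $\bB_i(z)\bB_j(w)$ with $j \ne \tau i$ are Drinfeld-type and can be verified inside rank two $\imath$subalgebras after checking compatibility of braid symmetries. The case $j = \tau i \ne i$ is an $\imath$boson-type calculation producing the $\bDel$-factor. The diagonal case $i = \tau i$ in \eqref{R-BiBi} reduces to the $q$-Onsager relation already established in \cite{LW21}.

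To finish the isomorphism I would construct the reverse map $\tUi \to \calD$ by sending $B_i \mapsto B_{i,0}$, $\K_i \mapsto \K_i$ and checking that the original relations \eqref{eq:SK}--\eqref{eq:S3} hold in $\calD$. Matching PBW bases on both sides, built from ordered products of real and imaginary root vectors indexed by Kostant-type partitions of positive affine real/imaginary roots, then upgrades the surjection to a bijection.

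The main obstacle is the Serre relation \eqref{serre} at the fixed node $i = \tau i = n$, which is inhomogeneous and contains nontrivial lower order terms expressed through $\bTh_i$. Unlike the Drinfeld-Jimbo case, the precise form of the correction is not canonical; the paper explicitly emphasizes this flexibility. I would attack it via a $\bbZ$-filtration on $\tUi$ (for instance by $\K$-weight or height) whose associated graded recovers the classical Drinfeld Serre relation, and then reconstruct the lower order terms by lifting iteratively through the filtration until the commutator $\mathbb{S}_{i,j}(w_1,w_2|z)$ is expressed solely in terms of the current generators. An alternative, likely cleaner route is to realize the positive half of $\tUi$ as a $q$-shuffle or $\imath$Hall algebra where the Serre identity can be read off from the combinatorics of the shuffle product; the particular form chosen in \eqref{serre} is optimized to match what such a computation naturally outputs and what the geometric computation on $\underline{K}^{G\times\bbC^*}(\calZ)$ will later demand.
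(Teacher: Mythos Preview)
The paper does not prove this theorem: it is quoted verbatim from \cite[Theorem~4.6]{LWZ24} (and \cite{LW21} for $n=1$), so there is no in-paper proof to compare against. What the present paper \emph{does} is take this presentation as input and then derive the variant relations \eqref{R-Serre2}, \eqref{eq:Th2}, \eqref{R-BnBn} needed for the geometric realization.

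That said, your outline is broadly the strategy actually used in \cite{LW21, Z22, LWZ24}: define real root vectors via the relative braid group action $B_{i,\ell}=T_\omega^\ell(B_i)$, extract imaginary root vectors from commutators of the form $[B_{i,m},B_{\tau i,0}]$, verify current relations by reduction to rank-one and rank-two $\imath$subalgebras, and conclude by matching PBW-type bases. Two points deserve flagging. First, your inverse map $\tUi\to\calD$ sends $B_i\mapsto B_{i,0}$ for $i\in\I_0$, but $\tUi$ also has the affine generator $B_0$, which has no obvious counterpart among the $B_{i,0}$; in the cited papers $B_0$ is recovered from the current generators via a specific braid-group identity, and checking the Serre relations \eqref{eq:S2}--\eqref{eq:S3} involving $B_0$ in $\calD$ is nontrivial and cannot be skipped. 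Second, your suggestion to handle the inhomogeneous Serre relation \eqref{serre} by passing to an associated graded or a shuffle/\allowbreak$\imath$Hall model is plausible in spirit, but the actual derivation in \cite{Z22} (recalled here around \eqref{serre2}--\eqref{serre3}) proceeds by showing that \eqref{serre} is a formal consequence of the other current relations \eqref{R-comm}--\eqref{R-BiBi} together with the \emph{finite-type} Serre relations at degree zero; the two auxiliary identities \eqref{serre2}--\eqref{serre3} are derived first and then linearly combined. This is more direct than a filtration argument and explains why the paper can freely replace \eqref{serre} by the equivalent form \eqref{R-Serre2}.
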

If one formally sets $C=0$ and $\Delta =0$ in the relations above, one sees the Drinfeld presentation of half a quantum loop algebra \cite{Dr87, Beck, Da12}. 

\subsection{Some equivalent relations}

For our geometric application, it turns out some variant of the above Drinfeld presentation of $\tUi$ will be more appropriate. 

\subsubsection{Serre relations}

First of all, the Serre relation \eqref{serre} can be simplified as follows. Recall from \cite{Z22} that the Serre relation \eqref{serre} can be derived from other relations \eqref{R-comm}--\eqref{R-BiBi} together with finite type Serre relations. In the process the Serre relation \eqref{serre} is derived from the following two relations (see \cite[(5.1)--(5.2)]{Z22}):
\begin{align}\label{serre2}
	&(w_1^{-1}+w_2^{-1}-[2]z^{-1})\bbS_{i,j}(w_1,w_2|z)\\
	&=\Sym_{w_1,w_2}\frac{\bDel(w_1w_2)}{q-q^{-1}}(q^{-2}w_1^{-1}-w_2^{-1}) [\bTh_i(w_2),\bB_{j}(z)]_{q^{-2}}\K_i,\notag
\end{align}
and 
\begin{align}\label{serre3}
	&(w_1+w_2-[2]z)\bbS_{i,j}(w_1,w_2|z)\\
	&=\Sym_{w_1,w_2}\frac{\bDel(w_1w_2)}{q-q^{-1}}(q^{-2}w_2-w_1) [\bB_{j}(z),\bTh_i(w_2)]_{q^{-2}}\K_i.\notag
\end{align}
Calculating $\eqref{serre2}\times [2]z+\eqref{serre3}\times (w_1^{-1}+w_2^{-1})$, we obtain \eqref{serre}. On the other hand, we can also obtain a Serre relation in a different form.

\begin{lemma}\label{lem:serre2}
	For any $i,j$ such that	$c_{ij}=-1,i=\tau i$, we have
	\begin{align}  \label{R-Serre2}
		(z-qw_1)(z-qw_2)\mathbb{S}_{i,j}(w_1,w_2| z)=\Sym_{w_1,w_2} \K_i\bDel(w_1w_2)(zw_1-q^{-2}zw_2)\bTh_i(w_2)\bB_{j}(z).
	\end{align}
\end{lemma}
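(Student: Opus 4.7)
The plan is to derive \eqref{R-Serre2} as a specific linear combination of the two relations \eqref{serre2} and \eqref{serre3} recalled just before the lemma from \cite{Z22}. Whereas \eqref{serre} itself arises from the combination $[2]z\cdot\eqref{serre2} + (w_1^{-1}+w_2^{-1})\cdot\eqref{serre3}$, the target \eqref{R-Serre2} is tailored so that its right-hand side involves only the ordering $\bTh_i(w_2)\bB_{j}(z)$ and not the reversed ordering $\bB_{j}(z)\bTh_i(w_2)$. This requires a different combination of \eqref{serre2} and \eqref{serre3} whose right-hand side cancels the $\bB_{j}(z)\bTh_i(w_2)$ contribution.

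The first step is to find scalars $\alpha(z,w_1,w_2)$ and $\beta(z,w_1,w_2)$, symmetric in $w_1,w_2$, such that
\[
\alpha\,(w_1^{-1}+w_2^{-1}-[2]z^{-1}) + \beta\,(w_1+w_2-[2]z) = (z-qw_1)(z-qw_2).
\]
Expanding the right-hand side as $z^2 - qz(w_1+w_2) + q^2 w_1 w_2$ and matching the coefficients of the three monomial types $1$, $w_1+w_2$, and $w_1 w_2$ uniquely forces (up to overall rescaling) $\alpha = -q^2 zw_1 w_2/[2]$ and $\beta = -z/[2]$. Multiplying the resulting identity by $\mathbb{S}_{i,j}(w_1,w_2|z)$ and by $[2]$, one gets
\[
[2](z-qw_1)(z-qw_2)\mathbb{S}_{i,j}(w_1,w_2|z) = -q^2 z w_1 w_2 \cdot \mathrm{RHS}\eqref{serre2} - z \cdot \mathrm{RHS}\eqref{serre3}.
\]

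The second step is to expand the $q^{-2}$-commutators via
$[\bTh_i(w_2),\bB_{j}(z)]_{q^{-2}} = \bTh_i(w_2)\bB_{j}(z) - q^{-2}\bB_{j}(z)\bTh_i(w_2)$
and the analogous identity for $[\bB_{j}(z),\bTh_i(w_2)]_{q^{-2}}$, and then regroup the combined right-hand side by the two possible orderings of $\bTh_i(w_2)$ and $\bB_j(z)$. The choice of $(\alpha,\beta)$ is precisely tuned so that the coefficient of $\bB_{j}(z)\bTh_i(w_2)$ inside the symmetrization vanishes identically, while the coefficient of $\bTh_i(w_2)\bB_{j}(z)$ becomes $(q^2-q^{-2})zw_1 + (q^{-4}-1)zw_2$. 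Dividing by the $q-q^{-1}$ in the denominators of \eqref{serre2} and \eqref{serre3} and using $(q^2-q^{-2})/(q-q^{-1}) = [2]$ together with $(q^{-4}-1)/(q-q^{-1}) = -q^{-2}[2]$, this coefficient collapses to $[2](zw_1 - q^{-2}zw_2)$. Dividing both sides by $[2]$ yields \eqref{R-Serre2}.

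The only non-mechanical step is guessing the combination $(\alpha,\beta)$; everything afterwards is routine polynomial bookkeeping. The identical vanishing (not just after symmetrization) of the $\bB_{j}(z)\bTh_i(w_2)$ coefficient provides a clean self-check that the correct combination has been chosen.
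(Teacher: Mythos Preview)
Your proof is correct and uses the same linear combination of \eqref{serre2} and \eqref{serre3} as the paper (up to an overall sign: the paper forms $q^2zw_1w_2\cdot\eqref{serre2}+z\cdot\eqref{serre3}$, which equals $-[2]$ times your combination). The difference is in how the right-hand side is handled. After forming the combination, the paper keeps both $q^{-2}$-commutators, then invokes the relation \eqref{R-BTh} to rewrite $\bB_j(z)\bTh_i(w_2)$ as a rational multiple of $\bTh_i(w_2)\bB_j(z)$ and simplifies the resulting rational-function coefficient. You instead expand the two commutators by hand and observe that the coefficient of $\bB_j(z)\bTh_i(w_2)$ already vanishes identically, so \eqref{R-BTh} is never needed. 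Your route is slightly more elementary and makes transparent why this particular combination was the right one to take; the paper's route, while a bit more indirect, shows that the simplification is ultimately forced by \eqref{R-BTh}.
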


\begin{proof}
	Calculating $\eqref{serre2}\times q^2zw_1w_2+\eqref{serre3}\times z$ gives us the identity
	\begin{align}
     \label{eq:newcomb}
		- [2](z-qw_1)(z-qw_2)\mathbb{S}_{i,j}(w_1,w_2|z) &= \K_i\frac{\bDel(w_1w_2)}{q-q^{-1}} \times 
  \\
		\Sym_{w_1,w_2}\Big((zw_2-q^2zw_1) [\bTh_i(w_2),\bB_{j}(z)]_{q^{-2}} &	+(q^{-2}zw_2-zw_1)[\bB_{j}(z),\bTh_i(w_2)]_{q^{-2}}  \Big). \notag
	\end{align}
	Using the relation \eqref{R-BTh} with $z, w$ switched,
	\[ \bB_j(z)  \bTh_i(w)
	=  \frac{(1 -q^{-1}z^{-1}w) (1 -qzw C)}{(1 -qz^{-1}w)(1 -q^{-1} zw C)}
	\bTh_i(w) \bB_j(z),\]
	we compute the main component of the RHS of \eqref{eq:newcomb} as
	\begin{align*}
		&(zw_2-q^2zw_1) [\bTh_i(w_2),\bB_{j}(z)]_{q^{-2}} +(q^{-2}zw_2-zw_1)[\bB_{j}(z),\bTh_i(w_2)]_{q^{-2}}\\
		&=\bigg((zw_2-q^2zw_1)\bigg(1-q^{-2} \frac{(1 -q^{-1}z^{-1}w_2) (1 -qzw_2 C)}{(1 -qz^{-1}w_2)(1 -q^{-1} zw_2 C)}\bigg)\\
		&+(q^{-2}zw_2-zw_1)\bigg(\frac{(1 -q^{-1}z^{-1}w_2) (1 -qzw_2 C)}{(1 -qz^{-1}w_2)(1 -q^{-1} zw_2 C)}-q^{-2}\bigg)\bigg)\bTh_i(w_2)\bB_{j}(z)\\
		&=(q^{-2}zw_2-zw_1)(q^2-q^{-2})\bTh_i(w_2)\bB_{j}(z)\\
		&=[2](q^{-2}zw_2-zw_1)(q-q^{-1})\bTh_i(w_2)\bB_{j}(z).
	\end{align*}
	Plugging this back into \eqref{eq:newcomb} finishes the proof.
\end{proof}

\subsubsection{A new variant of $\bTh_n(z)$}
We define $\check{\Theta}_{n,m}$, for $m\geq 1$, by the recursive formula
\[
\Theta_{n,m}=\check{\Theta}_{n,m}-\sum_{a=1}^{\lfloor \frac{m-1}{2}\rfloor}(q^{2}-1)\check{\Theta}_{n,m-2a}C^a-\delta_{m,ev}qC^{\frac{m}{2}},
\]
where $\delta_{m,ev}=\begin{cases}
		1, &\textit{ if m is even},\\
		0, &\textit{otherwise}
	\end{cases}$.
Set $\check{\Theta}_{n,m}=0$ for $m<0$, and $\check{\Theta}_{n,0}=\frac{1}{q-q^{-1}}$. We further form the generating function
$\check\bTh_{n}(z) =1+ \sum_{k\ge 1} (q-q^{-1}) \check\Theta_{n,k}z^{k}.$

\begin{lemma}
We have
\begin{align}
    \label{eq:Th2}
     \bTh_n(z)=\frac{1-q^2Cz^2}{1-Cz^2} \check{\bTh}_n(z).
\end{align}
\end{lemma}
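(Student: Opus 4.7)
The plan is to prove \eqref{eq:Th2} by a direct generating function manipulation: substitute the recursion defining $\check\Theta_{n,m}$ into the series $\bTh_n(z)=1+(q-q^{-1})\sum_{m\geq 1}\Theta_{n,m}z^m$, split the resulting triple sum into three pieces, and recognize each piece as a known rational expression in $\check\bTh_n(z)$ and $Cz^2$.

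First I would rewrite $\bTh_n(z)$ by plugging in
\[
\Theta_{n,m}=\check{\Theta}_{n,m}-\sum_{a=1}^{\lfloor (m-1)/2\rfloor}(q^{2}-1)\,\check{\Theta}_{n,m-2a}\,C^{a}-\delta_{m,\mathrm{ev}}\,qC^{m/2},
\]
obtaining three contributions. The first one gives $\check\bTh_n(z)-1$. For the second, interchange the order of summation via $k=m-2a$ (so $k\geq 1$, $a\geq 1$), factor out the geometric series $\sum_{a\geq 1}(Cz^2)^a=\frac{Cz^2}{1-Cz^2}$, and recognize the remaining $k$-sum as $\frac{\check\bTh_n(z)-1}{q-q^{-1}}$; this produces
\[
-(q^2-1)\,\frac{Cz^2}{1-Cz^2}\,(\check\bTh_n(z)-1).
\]
For the third, only even $m=2j$ contribute, yielding $-(q-q^{-1})q\sum_{j\geq 1}(Cz^2)^j=-\frac{(q^2-1)Cz^2}{1-Cz^2}$.

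Combining the three pieces gives
\[
\bTh_n(z)=\check\bTh_n(z)-(q^2-1)\,\frac{Cz^2}{1-Cz^2}\,\check\bTh_n(z)
=\check\bTh_n(z)\cdot\frac{1-Cz^2-(q^2-1)Cz^2}{1-Cz^2}
=\frac{1-q^2Cz^2}{1-Cz^2}\,\check\bTh_n(z),
\]
which is exactly \eqref{eq:Th2}. The only mild subtlety is keeping the summation bounds straight when swapping the order of summation in the double series (verifying that the constraints $1\leq a\leq \lfloor(m-1)/2\rfloor$ and $m\geq 1$ translate cleanly into $k\geq 1$ and $a\geq 1$); once that is handled, the identity reduces to recognizing the elementary geometric series $\sum_{a\geq 1}(Cz^2)^a=\tfrac{Cz^2}{1-Cz^2}$ and a single algebraic simplification, so there is no real obstacle.
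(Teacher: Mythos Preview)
Your proof is correct and follows essentially the same approach as the paper: substitute the recursion into the generating series, swap the order of summation in the double sum (turning it into a product of a geometric series in $Cz^2$ and $\check\bTh_n(z)-1$), sum the even-$m$ geometric series, and simplify. The paper's proof differs only in cosmetic bookkeeping (it absorbs the leading $1$ into $\check\bTh_n(z)$ at the outset rather than carrying $\check\bTh_n(z)-1$ separately), so there is nothing substantively different to compare.
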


\begin{proof}
It follows by definitions that 
\begin{align*}
	\bTh_n(z)&= 1+\sum_{m\geq 1}(q-q^{-1})\Theta_{n,m}z^m\\
			&= \check{\bTh}_n(z)-\sum_{m\geq 1}(q-q^{-1})\sum_{a=1}^{\lfloor \frac{m-1}{2}\rfloor}(q^{2}-1)\check{\Theta}_{n,m-2a}C^az^m-(q-q^{-1})\frac{qCz^2}{1-Cz^2}\\
			&= \check{\bTh}_n(z)-\sum_{m\geq 1}(q-q^{-1})\check{\Theta}_{n,m}z^m\sum_{a\geq 1}(q^{2}-1)C^az^{2a}-(q-q^{-1})\frac{qCz^2}{1-Cz^2}\\
			&= \check{\bTh}_n(z)-(\check{\bTh}_n(z)-1)\frac{(q^{2}-1)Cz^2}{1-Cz^2}-(q^2-1)\frac{Cz^2}{1-Cz^2}\\
			&= \frac{1-q^2Cz^2}{1-Cz^2}\check{\bTh}_n(z).
		\end{align*}
  The lemma is proved. 
\end{proof}

\subsubsection{A variant of \eqref{R-BiBi} and \eqref{R-Serre2}}

\begin{lemma}
The relation \eqref{R-BiBi} is equivalent to  the relation 
\begin{align}  \label{R-BnBn}
&(q^2z-w) \bB_n(z) \bB_n(w) +(q^{2}w-z) \bB_n(w) \bB_n(z)\\
&= \frac{\bDel(zw)  \K_{i}q^{-2} }{q-q^{-1}} \frac{(z-q^2w)(w-q^2z)}{z-w} ( \check{\bTh}_n(z) -\check{\bTh}_n(w)).\notag
\end{align}
\end{lemma}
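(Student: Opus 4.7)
The plan is to argue that both relations have the same left-hand side (namely $(q^2z-w)\bB_n(z)\bB_n(w) + (q^2 w - z)\bB_n(w)\bB_n(z)$), so the equivalence reduces to showing that their right-hand sides are equal after substituting the formula \eqref{eq:Th2} for $\bTh_n$ in terms of $\check\bTh_n$. Since the RHS of \eqref{R-BiBi} is multiplied by the formal series $\bDel(zw)=\sum_{k\in\bbZ}C^k(zw)^k$, which acts as the delta distribution enforcing $Czw=1$, the plan is to use this to simplify all occurrences of the prefactor $\frac{1-q^2Cu^2}{1-Cu^2}$ for $u\in\{z,w\}$.

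Concretely, I would first observe that under the substitution $Czw=1$ (i.e.\ $Cw=z^{-1}$ and $Cz=w^{-1}$), one has
\begin{align*}
\bDel(zw)\bTh_n(w)&=\bDel(zw)\,\frac{1-q^2Cw^2}{1-Cw^2}\check\bTh_n(w)=\bDel(zw)\,\frac{z-q^2w}{z-w}\check\bTh_n(w),\\
\bDel(zw)\bTh_n(z)&=\bDel(zw)\,\frac{1-q^2Cz^2}{1-Cz^2}\check\bTh_n(z)=\bDel(zw)\,\frac{w-q^2z}{w-z}\check\bTh_n(z).
\end{align*}
Substituting these into the RHS of \eqref{R-BiBi} converts it to
\[
\frac{\bDel(zw)\K_n}{q-q^{-1}}\cdot\frac{1}{z-w}\Big[(z-q^{-2}w)(z-q^2w)\check\bTh_n(w)-(w-q^{-2}z)(w-q^2z)\check\bTh_n(z)\Big].
\]

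The next step is a purely algebraic identity in the polynomial ring $\C(q)[z,w]$: both quadratic factors coincide, namely
\[
(z-q^{-2}w)(z-q^2w)=z^2-(q^2+q^{-2})zw+w^2=(w-q^{-2}z)(w-q^2z),
\]
and moreover this common quantity equals $-q^{-2}(z-q^2w)(w-q^2z)$, as one checks by expanding $(z-q^2w)(w-q^2z)=(1+q^4)zw-q^2(z^2+w^2)$. Pulling out this common factor then yields exactly the RHS of \eqref{R-BnBn}:
\[
\frac{\bDel(zw)\K_n\,q^{-2}}{q-q^{-1}}\cdot\frac{(z-q^2w)(w-q^2z)}{z-w}\bigl(\check\bTh_n(z)-\check\bTh_n(w)\bigr).
\]

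I do not expect any serious obstacle here; the only subtle point is justifying the evaluation $Czw=1$ on the $\bDel(zw)$-coefficient, which follows from the standard formal distribution identity $\bDel(zw)\,f(C,z,w)=\bDel(zw)\,f(C,z,w)|_{Czw=1}$ whenever $f$ is a rational function in $Czw$ regular at $Czw=1$. Since the prefactors $\frac{1-q^2Cw^2}{1-Cw^2}$ and $\frac{1-q^2Cz^2}{1-Cz^2}$ become, respectively, rational functions of $w/z$ and $z/w$ after this substitution, the manipulation is well-defined, and the argument is reversible, so the two relations are equivalent.
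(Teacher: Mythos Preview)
Your proof is correct and follows essentially the same route as the paper's: both substitute \eqref{eq:Th2} into the right-hand side of \eqref{R-BiBi}, invoke the delta-function identity $\bDel(zw)\cdot Cw=\bDel(zw)\cdot z^{-1}$ (equivalently $Czw=1$) to rewrite $\frac{1-q^2Cu^2}{1-Cu^2}$ as a rational function of $z,w$, and then factor out the common quadratic $-q^{-2}(z-q^2w)(w-q^2z)$. Your write-up is somewhat more explicit than the paper's about why the two quadratic coefficients coincide and about the reversibility of the argument, but the underlying computation is the same.
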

	
\begin{proof}
 By definition \eqref{GF} of $\Delta$, we have $\Delta(zw) Cw = \Delta(zw) z^{-1}$. Using this identity in the equation~ (*) below, we compute that
	\begin{align*}
		&(q^2z-w) \bB_n(z) \bB_n(w) +(q^{2}w-z) \bB_n(w) \bB_n(z)
			\\
		&\overset{\eqref{R-BiBi}}{=} \frac{\bDel(zw)  \K_{i} }{q-q^{-1}}  \bigg( (z -q^{-2}w)\frac{1-q^2Cw^2}{1-Cw^2}\check{\bTh}_n(w) +(w -q^{-2}z)\frac{1-q^2Cz^2}{1-Cz^2}\check{\bTh}_n(z) \bigg)
           \\
		& \overset{(*)}{= }\frac{\bDel(zw)  \K_{i} }{q-q^{-1}}  \bigg( (z -q^{-2}w)\frac{z-q^2w}{z-w}\check{\bTh}_n(w) +(w -q^{-2}z)\frac{w-q^2z}{w-z}\check{\bTh}_n(z) \bigg)
           \\
		&= \frac{\bDel(zw)  \K_{i}q^{-2} }{q-q^{-1}} \frac{(z-q^2w)(w-q^2z)}{z-w} ( \check{\bTh}_n(z) -\check{\bTh}_n(w)).
		\end{align*}
The lemma is proved. 
	\end{proof}

\begin{lemma}
    The relation \eqref{R-Serre2} is equivalent to the relation: for any $i,j$ such that	$c_{ij}=-1,i=\tau i$, we have
	\begin{align}  \label{R-Serre2var}
		&(z-qw_1)(z-qw_2)\mathbb{S}_{i,j}(w_1,w_2| z)\\
        &=\K_i\bDel(w_1w_2)\frac{z(qw_1-q^{-1}w_2)(q^{-1}w_1-qw_2)}{w_1-w_2}(\check{\bTh}_i(w_2)\bB_{j}(z)-\check{\bTh}_i(w_1)\bB_{j}(z)).\notag
	\end{align}
\end{lemma}
\begin{proof}
    By the assumption, $i=n$. Hence, 
    \begin{align*}  
		&(z-qw_1)(z-qw_2)\mathbb{S}_{i,j}(w_1,w_2| z)\\
        &\overset{\eqref{R-Serre2}}{=}\Sym_{w_1,w_2} \K_i\bDel(w_1w_2)(zw_1-q^{-2}zw_2)\bTh_i(w_2)\bB_{j}(z)\\
        &=\K_i\bDel(w_1w_2)(zw_1-q^{-2}zw_2)\frac{1-q^2Cw_2^2}{1-Cw_2^2} \check{\bTh}_i(w_2)\bB_{j}(z)\\
        &+\K_i\bDel(w_1w_2)(zw_2-q^{-2}zw_1)\frac{1-q^2Cw_1^2}{1-Cw_1^2} \check{\bTh}_i(w_1)\bB_{j}(z)\\
        &=\K_i\bDel(w_1w_2)(zw_1-q^{-2}zw_2)\frac{w_1-q^2w_2}{w_1-w_2} \check{\bTh}_i(w_2)\bB_{j}(z)\\
        &+\K_i\bDel(w_1w_2)(zw_2-q^{-2}zw_1)\frac{w_2-q^2w_1}{w_2-w_1} \check{\bTh}_i(w_1)\bB_{j}(z)\\
        &=\K_i\bDel(w_1w_2)\frac{z(qw_1-q^{-1}w_2)(q^{-1}w_1-qw_2)}{w_1-w_2}(\check{\bTh}_i(w_2)\bB_{j}(z)-\check{\bTh}_i(w_1)\bB_{j}(z)).
	\end{align*}
\end{proof}

\subsection{A variant of Drinfeld presentation} 

Combining all the new variants of relations \eqref{eq:Th2}, \eqref{R-BnBn}, and \eqref{R-Serre2var}, we obtain a new Drinfeld presentation of $\tUi$ below from the original one in Theorem~\ref{thm:drinfeld}. For the sake of simplicity of notations, we shall use $\bTh_n(z)$ to denote the $\check{\bTh}_n(z)$ above.

\begin{theorem} \label{thm:drinfeld'} 
The affine iquantum group $\tUi$ is isomorphic to the
	$\C(q)$-algebra generated by the elements $B_{i,l}$, $\Theta_{i,m}$, $\K_i^{\pm 1}$, and $C^{\pm 1}$, where $i\in\I_0$, $l\in\Z$ and $m\ge 1$, subject
	to the following relations \eqref{iDR KK}--\eqref{iDR Serre1}, for $i,j\in \I_0$:
	\begin{align}
		C \textit{ is central}, & \qquad  [\K_i,\K_j]=[\K_i, \bTh_j(w)] = 0,
		\label{iDR KK} \\
		\bTh_i(z)\bTh_j(w) &= \bTh_j(w)\bTh_i(z),
		\label{iDR ThTh} \\
		\K_i\bB_{j}(w) &= q^{c_{\tau i,j}-c_{ij}} \bB_{j}(w) \K_i,
		\label{iDR KB} \\
	  \bB_j(w)  \bTh_i(z)
		&=  \frac{(1 -q^{c_{ij}}zw^{-1}) (1 -q^{-c_{\tau i,j}}zw C)}{(1 -q^{-c_{ij}}zw^{-1})(1 -q^{c_{\tau i,j}} zw C)}\bTh_i(z) \bB_j(w), \label{iDR BTh}
		\\
		[\bB_i(z), \bB_{\tau i}(w)] 
		&= \frac{\bDel (zw)}{q-q^{-1}} (\K_{\tau i} \bTh_i(z) -\K_{i} \bTh_{\tau i}(w)),  \qquad \text{ if }c_{i,\tau i}=0\label{iDR Btaui},
		\\
		(q^{c_{ij}}z -w) \bB_i(z) & \bB_j(w) +(q^{c_{ij}}w-z) \bB_j(w) \bB_i(z)=0, \quad \text{ if } j \neq  \tau i, \label{iDR BB0}
		\\
		(q^2z-w) \bB_i(z) & \bB_i(w) +(q^{2}w-z) \bB_i(w) \bB_i(z)
  		\label{iDR BBii} \\
		= \frac{\bDel(zw)\K_{i}q^{-2}}{q-q^{-1}} & \frac{(z-q^2w)(w-q^2z)}{z-w} ( \bTh_i(z) -\bTh_i(w))\notag, \quad \text{ if } i =\tau i,
		\end{align}
		and the Serre relations:
  \begin{align}
		&\mathbb{S}_{i,j}(w_1,w_2| z)=0, \textit{\quad if } c_{ij}=-1, j\neq \tau i\neq i,
  		\label{iDR Serre0} \\
		& (z-qw_1)(z-qw_2)\mathbb{S}_{i,j}(w_1,w_2| z)=\K_i\bDel(w_1w_2)\frac{z(qw_1-q^{-1}w_2)(q^{-1}w_1-qw_2)}{w_1-w_2}
  		\label{iDR Serre1} \\
        &(\bTh_i(w_2)\bB_{j}(z)-\bTh_i(w_1)\bB_{j}(z)),\textit{ if } c_{ij}=-1,i=\tau i.\notag
	\end{align}
\end{theorem}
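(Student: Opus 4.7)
The theorem should follow by combining the three preceding intermediate results---the identity \eqref{eq:Th2}, Lemma~\ref{lem:serre2}, and the lemma equating \eqref{R-BiBi} with \eqref{R-BnBn}---with the original Drinfeld presentation of Theorem~\ref{thm:drinfeld}. I would organize the argument in three steps. The first step is to verify that the recursive formula defining $\check{\Theta}_{n,m}$ in terms of $\Theta_{n,l}$ (for $l\le m$) and $C$ is triangular, hence invertible over $\C(q)[C^{\pm 1}]$. This identifies the $\C(q)$-subalgebra generated by $\{\Theta_{n,m}\}_{m \ge 1}$ (with $C^{\pm 1}$ adjoined) with that generated by $\{\check{\Theta}_{n,m}\}_{m \ge 1}$. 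The convention in Theorem~\ref{thm:drinfeld'} of writing $\bTh_n(z)$ for $\check{\bTh}_n(z)$ is then merely a relabeling of generators.

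The second step is to transcribe each defining relation of Theorem~\ref{thm:drinfeld} under this relabeling. Relations \eqref{iDR KK}, \eqref{iDR ThTh}, \eqref{iDR KB}, \eqref{iDR Btaui}, and \eqref{iDR BB0} either do not involve $\bTh_n$, or they involve only the scalar factor $(1-q^2Cz^2)/(1-Cz^2)$, which is central and cancels on both sides; in particular, \eqref{iDR BTh} for $i=n$ follows from \eqref{R-BTh} by dividing both sides by the same scalar factor in $z$. The modified relations \eqref{iDR BBii} and \eqref{iDR Serre1} are exactly the content of the lemmas producing \eqref{R-BnBn} and \eqref{R-Serre2}, respectively, and \eqref{iDR Serre0} is unchanged since it does not involve $\bTh$.

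The third step addresses the converse: that the new relations recover the original ones modulo the others. Since \eqref{eq:Th2} is invertible in the formal variable $z$ (via the central element $C$), the $i=n$ case of \eqref{R-BiBi} is recovered from \eqref{iDR BBii} by direct substitution. For the original Serre relation \eqref{serre}, following the strategy of \cite{Z22} it suffices to recover \eqref{serre2} and \eqref{serre3} from \eqref{iDR Serre1} modulo the other Drinfeld relations.

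The main obstacle is exactly this last claim: extracting the content of the two separate identities \eqref{serre2} and \eqref{serre3} from the single polynomial identity \eqref{iDR Serre1}. My plan is to manipulate \eqref{iDR Serre1} together with \eqref{iDR BTh} and \eqref{iDR ThTh} so as to reconstruct both \eqref{serre2} and \eqref{serre3} modulo the other relations. Recall that \eqref{iDR Serre1} was derived in Lemma~\ref{lem:serre2} from a specific linear combination of \eqref{serre2} and \eqref{serre3} (multiplied by $[2]z$ and $(w_1^{-1}+w_2^{-1})$ respectively) followed by further scalar multiplications arising from the $\bTh_n$-$\bB_j$ commutation in \eqref{iDR BTh}. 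I would expect that targeted coefficients of \eqref{iDR Serre1} in appropriate monomials of $w_1,w_2,z$ yield identities equivalent modulo \eqref{iDR KK}--\eqref{iDR BB0} to each of \eqref{serre2} and \eqref{serre3}; verifying this reversibility (and, if needed, supplying an alternative linear combination inverting the one in Lemma~\ref{lem:serre2}) is the genuinely nontrivial part of the argument.
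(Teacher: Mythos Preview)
Your proposal is correct and follows exactly the route the paper takes: the paper offers no proof of Theorem~\ref{thm:drinfeld'} beyond the sentence ``Combining all the new variants of relations \eqref{R-Serre2}, \eqref{eq:Th2}, and \eqref{R-BnBn}, we obtain a new Drinfeld presentation of $\tUi$ below from the original one in Theorem~\ref{thm:drinfeld},'' and your Steps~1 and~2 spell out precisely what that sentence means.

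Your Step~3 correctly isolates the only point left implicit in the paper, namely the converse implication \eqref{iDR Serre1}$\Rightarrow$\eqref{serre} modulo the remaining relations. The paper does not argue this either; it relies tacitly on the framework of \cite{Z22}, where it is shown that the generating-function Serre relation \eqref{serre} is equivalent, modulo \eqref{R-comm}--\eqref{R-BiBi}, to the finite-type relation \eqref{eq:S2}. So rather than trying to invert the particular linear combination in Lemma~\ref{lem:serre2} to recover \eqref{serre2} and \eqref{serre3} separately, the cleaner route is to extract the finite-type relation \eqref{eq:S2} from a suitable coefficient of \eqref{iDR Serre1} and then invoke \cite{Z22} to regenerate \eqref{serre2}, \eqref{serre3}, and hence \eqref{serre}. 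Your plan to use \eqref{iDR BTh} alongside \eqref{iDR Serre1} when extracting coefficients is the right instinct, since the polynomial prefactor $(z-qw_1)(z-qw_2)$ on the left mixes adjacent modes and a direct single-coefficient extraction does not immediately yield \eqref{eq:S2}.
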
	
Note that, in our setting, $c_{i,\tau i}=0 \Leftrightarrow i\neq n$ and $i=\tau i \Leftrightarrow i=n$. 

\subsection{A polynomial representation}
 \label{subsec:poly}

For $m\in \Z$, we denote
\begin{align}  \label{eq:th}
\theta_m(z) =\frac{q^mz-1}{z-q^m}. 
\end{align}
Then $\theta_m(z)^{-1}=\theta_m(z^{-1})$. For any subset $I\subset [d]$, we denote 
\begin{align}  \label{PhiI}
\Phi_I(z):=\prod_{t\in I}\theta_1(z/x_t). 
\end{align}

For any set partition $I=(I_1,I_2,\ldots, I_l)$ of $\{1,2,\dots,d\}$, let $(x_I)$ denote the corresponding ordered set of variables 
\[
(x_{i_{1,1}},\dots, x_{i_{1,j_1}},x_{i_{2,1}},\dots, x_{i_{2,j_2}},\dots, x_{i_{l,j_l}},x^{-1}_{i_{1,1}},\dots, x^{- 1}_{i_{1,j_1}},x^{- 1}_{i_{2,1}},\dots, x^{- 1}_{i_{2,j_2}},\dots, x^{- 1}_{i_{l,j_l}}), 
\]
where $I_k=\{i_{k,1}<i_{k,2}<\dots<i_{k,j_k}\}$. For any $r\in I_s$, let $\tau_r^+I$ (respectively, $\tau_r^-I$) be the partition of $\{1,2,\dots,d\}$ with $r$ shifted from $I_s$ to $I_{s+1}$ (respectively, $I_{s-1}$). Let $(x_{\iota_jI})$ (resp. $(x_{\iota_j\iota_kI})$) denote the ordered set of variables $(x_I)$ with $x_j^{\pm 1}$ (resp. $x_j^{\pm 1}$ and $x_k^{\pm 1}$) switched to $x_j^{\mp 1}$ (resp. $x_j^{\mp 1}$ and $x_k^{\mp 1}$). For example, let $I=(\{1,2\},\{3,4\})$ be a set partition of $\{1,2,3,4\}$. Then \[f(x_{\tau_3^-\tau_1^+I})=f(x_2,x_3,x_1,x_4,x_2^{-1},x_3^{-1},x_1^{-1},x_4^{-1})=f(x_{\tau_1^+\tau_3^-I}),\] and 
\[
f(x_{\iota_1\tau_1^+I})=f(x_2,x_1^{-1},x_3,x_4,x_2^{-1},x_1,x_3^{-1},x_4^{-1}).
\]
Moreover, we define $f(x_{\tau_r^+\iota_sI}):=f(x_{\iota_s\tau_r^+I})$, $f(x_{\tau_r^+\iota_s\iota_tI}):=f(x_{\iota_s\iota_t\tau_r^+I})$, and $f(x_{\iota_s\tau_r^+\iota_tI}):=f(x_{\iota_s\iota_t\tau_r^+I})$.

Denote
\begin{align}  \label{eq:P}
    \bfP:= \bigoplus_{\mbf v\in \Lambda_{\fc,d}} \bfR^{[\bfv]^\fc} [q,q^{-1}]. 
    \end{align}
Define operators $\hat{B}_{n,k}$ and $\hat{E}_{i,k}$,$\hat{F}_{i,k}$ (for $i\in \I_0 \setminus \{n\}$, i.e., $1\leq i\leq n-1, k\in \bbZ$)  on 
\[
\underline{\bfP} := \bigoplus_{\mbf v\in \Lambda_{\fc,d}} \underline{\bfR}^{[\bfv]^\fc},
\]
where we recall that 
\[\underline{\bfR}^{[\bfv]^\fc}:=\bfR^{[\bfv]^\fc}(q)\otimes_{\bbC}\bigg(\bbC\Big[\frac{1}{1-e^\alpha},\frac{1}{1-q^2e^\alpha}, \forall \alpha \in R \Big]\bigg)^W.\]
Recall also $\underline{R}(G)(q)=R(G)(q)\otimes_{\bbC}\Big(\bbC\big[\frac{1}{1-e^\alpha},\frac{1}{1-q^2e^\alpha}, \forall \alpha \in R \big]\Big)^W$.

Let $1\leq i\leq n-1$. For $\mbf v\in \Lambda_{\fc,d}$  \eqref{eq:Lamdacd}, denote
\begin{align*}
\mathbf{v'} &=\bfv-\bfe_i+\bfe_{i+1}+\bfe_{2n-i}-\bfe_{2n+1-i},
\\
\mathbf{v''} &=\bfv+\bfe_i-\bfe_{i+1}-\bfe_{2n-i}+\bfe_{2n+1-i}.
\end{align*}
We define 
\[
\hat{E}_{i,k}\in \bigoplus_{\mbf v\in \Lambda_{\fc,d}} \Hom_{\underline{R}(G)(q)}(\underline{\bfR}^{[\mbf v']^{\fc}},\underline{\bfR}^{[\bfv]^{\fc}})
\]
by letting
\begin{equation}\label{equ:Ei}
    (\hat{E}_{i,k}f)(x_{[\bfv]^\fc}):=\sum_{j\in [\bfv]_{i}}x_j^k\Phi_{[\bfv]_i\setminus \{j\}}(qx_j)f(x_{\tau^+_j[\bfv]^\fc})\in \underline{\bfR}^{[\bfv]^\fc},
    \quad \text{for } f\in \underline{\bfR}^{[\bfv']^\fc}.
\end{equation}
We further define 
\[\hat{F}_{i,k}\in \bigoplus_{\mbf v\in \Lambda_{\fc,d}} \Hom_{\underline{R}(G)(q)}(\underline{\bfR}^{[\mbf v'']^{\fc}},\underline{\bfR}^{[\bfv]^{\fc}})\] 
by letting
\begin{equation}\label{equ:F}
    (\hat{F}_{i,k}f)(x_{[\bfv]^\fc}) :=\sum_{j\in [\bfv]_{i+1}}x_j^k\Phi_{[\bfv]_{i+1}\setminus \{j\}}(q^{-1}x_j)^{-1}f(x_{\tau^-_j[\bfv]^\fc})\in \underline{\bfR}^{[\bfv]^{\fc}},
    \quad \text{for }f\in \underline{\bfR}^{[\bfv'']^\fc}.
\end{equation}
Finally, we define 
\[\hat{B}_{n,k}\in \bigoplus_{\mbf v\in \Lambda_{\fc,d}} \Hom_{\underline{R}(G)(q)}(\underline{\bfR}^{[\mbf v]^{\fc}},\underline{\bfR}^{[\bfv]^{\fc}})\]
by letting
\begin{equation}\label{equ:En}
    (\hat{B}_{n,k}f)(x_{[\bfv]^\fc}):=\sum_{j\in [\bfv]_n}x_{j}^k\cdot\theta_1(qx_j^2)\cdot\Phi_{[\bfv]_n\setminus\{j\}}(qx_j) \cdot f(x_{\iota_j[\bfv]^\fc})\in \underline{\bfR}^{[\bfv]^\fc}.
\end{equation}

\begin{remark}
    By Propositions \ref{prop:actions1} and \ref{prop:actions2} below, the operators $\hat{E}_{i,k}$ and $\hat{F}_{i,k}$ defined in \eqref{equ:Ei}--\eqref{equ:F} preserve the (non-localized) vector space $\bfP$ while Proposition~ \ref{prop:actions3} shows that $\hat{B}_{n,k}$ acts on the localized vector space $\underline{\bfP}$.
\end{remark}

For $1\leq i\leq n-1$, recalling $N=2n$, we denote  
\begin{align*}
	\hat{E}_i(z) &=\sum_{k\in\bbZ}\hat{E}_{i,k}q^{ki}z^{k},
 \qquad \hat{F}_{i}(z)=\sum_{k\in\bbZ}\hat{F}_{i,k}q^{-k(N-i)}z^{-k},
 \\
 \hat{B}_n(z) &=\sum_{k\in\bbZ}\hat{B}_{n,k}q^{kn}z^{k}.
\end{align*}

We define linear operators on $\underline{\bfR}^{[\bfv]^\fc}$, for each $\mbf v\in \Lambda_{\fc,d}$ and $1\leq i\leq n-1$:
\begin{align}  \label{eq:3K}
\begin{cases}
    &\hat{\K}_i = \text{ the scalar multiplication by }q^{-v_i+v_{i+1}},\\
    &\hat{\K}_{\tau i} = \hat{\K}_i^{-1}, \\
    &\hat{\K}_n = \text{ the scalar multiplication by } -q.
\end{cases}
\end{align}
This gives rise to linear maps
\[
\hat{\K}_i  \in \bigoplus_{\mbf v\in \Lambda_{\fc,d}} \Hom_{\underline{R}(G)(q)}(\underline{\bfR}^{[\mbf v]^{\fc}},\underline{\bfR}^{[\bfv]^{\fc}}),
\qquad \text{ for } i\in \I_0 =\{1, \ldots, 2n-1\}.
\]

For $1\leq i\leq n$, recalling $[\bfv]_i$ from \eqref{eq:vi} and $\Phi_I(z)$ from \eqref{PhiI}, we define the functions
\begin{equation}\label{equ:Phi}
	\Phi_{i,\bfv}(z):=\begin{dcases}
 \Phi_{[\bfv]_i}(q^{1-i}z)\Phi_{[\bfv]_{i+1}}(q^{-1-i}z)^{-1},\quad &\textit{ if } 1\leq i\leq n-1;\\
		\Phi_{[\bfv]_n}(q^{1-n}z)\Phi_{[\bfv]_n}(q^{1+n}z^{-1}), \quad &\textit{ if } i=n. 
  \end{dcases}
\end{equation}
Note that $\Phi_{n,\bfv}(z^{-1})=\Phi_{n,\bfv}(zq^N)$. For $1\leq i\leq n$, we define the operator $\hat{\bTh}_i(z)$ (respectively, $\hat{\bTh}_{\tau i}(z)$), which acts on $\underline{\bfR}^{[\bfv]^\fc}$ by multiplication with the rational function $q^{v_{i+1}-v_i}\Phi_{i,\bfv}(z^{-1})$ (respectively, $q^{v_i-v_{i+1}}\Phi_{i,\bfv}(zq^N)$). All these functions expand as a formal power series in $z$ with leading term 1. 

\begin{theorem}\label{thm:polyrepeven}
	The assignment 
	\[
 \Psi: C\mapsto q^N, \quad\K_i \mapsto \hat{\K}_i,\quad \bTh_i(z)\mapsto \hat{\bTh}_i(z),\]
	\[
 \bB_{i}(z)\mapsto \begin{dcases}\hat{E}_{i}(z) &\textit{ if } 1\leq i\leq n-1,\\
		\hat{B}_n(z) &\textit{ if } i=n,\\
		\hat{F}_{\tau(i)}(z) &\textit{ if } 1+n\leq i\leq 2n-1,
	\end{dcases}
 \]
	defines a representation of $\tUi$ on $\underline{\bfP}$; here the presentation of $\tUi$ in Theorem \ref{thm:drinfeld'} is used.
\end{theorem}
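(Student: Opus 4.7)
The plan is to verify each of the defining relations \eqref{iDR KK}--\eqref{iDR Serre1} of Theorem \ref{thm:drinfeld'} for the assigned operators, component by component on $\underline{\bfR}^{[\bfv]^\fc}$. The operators $\hat{\K}_j$ and $\hat{\bTh}_i(z)$ act as multiplication by scalars or by rational functions in the variables $x_j$, while each generator $\hat{E}_i(z)$, $\hat{F}_i(z)$, $\hat{B}_n(z)$ acts as a finite sum of shift operators indexed by $j\in[\bfv]_i$ (or $[\bfv]_{i+1}$, or $[\bfv]_n$) that either move $j$ between adjacent blocks of the partition $[\bfv]^\fc$ or flip $x_j \mapsto x_j^{-1}$. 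Consequently every relation becomes a rational function identity in the equivariant variables, which can be checked by expanding products, collecting terms by the summation indices, and comparing coefficients.

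First I would dispatch the Cartan-type relations \eqref{iDR KK}, \eqref{iDR ThTh}, \eqref{iDR KB}. Since $\hat{\K}_j$ and $\hat{\bTh}_i(z)$ act diagonally on each component, \eqref{iDR KK} and \eqref{iDR ThTh} are immediate. For \eqref{iDR KB} one tracks how $\bfv$ changes under a shift of one variable from one block to a neighboring block (or under a sign flip at the middle block for $\hat{B}_n$) and checks that the eigenvalue of $\hat{\K}_j$ transforms by the prescribed power of $q$. The $\bTh$--$B$ relation \eqref{iDR BTh} follows by computing $\hat{\bTh}_i(z)\,\hat{E}_j(w)f$ on a test polynomial $f$ and comparing with $\hat{E}_j(w)\,\hat{\bTh}_i(z)f$: the ratio of $\Phi_{i,\bfv}$-values before and after the shift produces exactly the scalar prefactor in \eqref{iDR BTh}, once one rewrites the $\theta_1$-factors in the form $\tfrac{q z-w}{z-qw}$, etc.

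The current relations \eqref{iDR Btaui}--\eqref{iDR BBii} are verified by expanding products of shift operators into double sums indexed by pairs of variables. The "distinct-index" contributions are symmetric and assemble, after clearing denominators, into a zero or a pure multiple of the left-hand side; the remaining "coinciding-index" contributions (for \eqref{iDR BBii} with $i=\tau i=n$, and for \eqref{iDR Btaui} where an $\hat{E}_i$-shift is followed by its $\hat{F}_i$-reverse) carry simple poles whose residues produce the Cartan term on the right. The type $A$ factor is parallel to the argument of Vasserot \cite[\S3]{V98}; the new input is the type $C$ case \eqref{iDR BBii} at node $n$, where the shift is the sign flip $x_j\mapsto x_j^{-1}$ with the coefficient $\theta_1(qx_j^2)\,\Phi_{[\bfv]_n\setminus\{j\}}(qx_j)$. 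For this case one computes the symmetrization of $\hat{B}_n(z)\hat{B}_n(w)$ and checks that the quadratic pole collapses to the single difference $\bTh_n(z)-\bTh_n(w)$ precisely when $\bTh_n(z)$ is taken to be the variant $\check{\bTh}_n(z)$ of \eqref{eq:Th2}; this is what forced the replacement in Theorem \ref{thm:drinfeld'}.

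The main obstacle is the Serre relation \eqref{iDR Serre1} at node $n$, which involves three generators and a highly nontrivial lower-order right-hand side. This is exactly why we reformulated the Serre relation via Lemma \ref{lem:serre2}: the form \eqref{iDR Serre1} is the one whose right-hand side matches the output of the polynomial representation. Expanding $\mathbb{S}_{n,j}(w_1,w_2\mid z)$ with the explicit formulas for $\hat{B}_n$ and $\hat{E}_{n-1}$ (or $\hat{F}_{n-1}$) yields a triple sum over indices $(j_1,j_2,j_3)$; symmetrization in $w_1,w_2$ kills the generic terms where $j_1\ne j_2$, and the residual terms where the two $\hat{B}_n$-indices coincide must be shown to reassemble into $\hat{\bTh}_n(w_2)\hat{\bB}_j(z)-\hat{\bTh}_n(w_1)\hat{\bB}_j(z)$ weighted by $\tfrac{z(qw_1-q^{-1}w_2)(q^{-1}w_1-qw_2)}{w_1-w_2}$. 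The combinatorial bookkeeping of the poles at $x_{j_1}=x_{j_2}^{\pm 1}$ and $x_{j_1}=q^{\pm 2}x_{j_2}$ is delicate, and for this reason the full verification is deferred to Section~\ref{sec:proof} and Appendix~\ref{App:A}.
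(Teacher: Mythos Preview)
Your proposal is correct and follows essentially the same approach as the paper: both verify the relations \eqref{iDR KK}--\eqref{iDR Serre1} of Theorem~\ref{thm:drinfeld'} directly on the explicit operators, separating the ``generic'' (distinct-index) terms from the ``coinciding-index'' residue terms, invoking the residue theorem to produce the Cartan pieces in \eqref{iDR Btaui} and \eqref{iDR BBii}, and reserving the lengthy computation for the Serre relation \eqref{iDR Serre1} to the appendix. One minor refinement: in the Serre relation the distinct-index terms do not vanish by naive symmetrization alone but require the cyclic identity $[2]=\theta_1(qx_l/x_j)\theta_1(qx_j/x_k)\theta_1(qx_k/x_l)+\theta_1(qx_l/x_k)\theta_1(qx_k/x_j)\theta_1(qx_j/x_l)$, which is the key combinatorial input in Appendix~\ref{App:A}.
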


The proof of the theorem will occupy Section~\ref{sec:proof} and Appendix~\ref{App:A}.

\section{Proof of Theorem \ref{thm:polyrepeven}}
\label{sec:proof}

We shall prove Theorem \ref{thm:polyrepeven} by checking that the corresponding operators under $\Psi$ therein satisfy all the relations \eqref{iDR KK}--\eqref{iDR Serre1} for $\tUi$ in Theorem \ref{thm:drinfeld'}. The relations \eqref{iDR KK} and \eqref{iDR ThTh} are clear. The verification of the Serre relations \eqref{iDR Serre0}--\eqref{iDR Serre1} is long and will be deferred to  Appendix~\ref{App:A}. In this section we shall verify the remaining relations \eqref{iDR KB} and \eqref{iDR BTh}--\eqref{iDR BBii} one-by-one. 

\subsection{Relation \eqref{iDR KB}}  

Recall Relation \eqref{iDR KB} states that $\K_i \bB_j(w) =q^{c_{\tau i,j}-c_{ij}} \bB_{j}(w) \K_i$.

This relation clearly holds for $i=n$ since $\hat{\K}_n=-q \text{Id}$. 

Let us assume $i\neq n$.
Since $\hat{\K}_{\tau i}=\hat{\K}^{-1}_i$, it suffices to verify the relation for $1\leq i\leq n-1$. If $1\leq j\leq n-1$, for any $f\in\underline{\bfR}^{[\bfv]^\fc}$,
\begin{align*}
	\hat{\K}_i\hat{E}_{j}(w)\hat{\K}_i^{-1}(f)&=q^{-(\bfe_j-e_{j+1}-\bfe_{2n+1-j}+\bfe_{2n+2-j})_i+(\bfe_j-\bfe_{j+1}-\bfe_{2n+1-j}+\bfe_{2n+2-j})_{i+1}}\hat{E}_{j}(z)(f)\\	&=q^{-2\delta_{i,j}+\delta_{i,j+1}+\delta_{i,j-1}+2\delta_{i,\tau j}-\delta_{i,\tau j+1}-\delta_{i,\tau j-1}}\hat{E}_{j}(w)(f)\\
	&=q^{c_{\tau i,j}-c_{i,j}}\hat{E}_{j}(w)(f),
\end{align*}
and
\begin{align*}
	\hat{\K}_i\hat{F}_{j}(w)\hat{\K}_i^{-1}(f)&=q^{(\bfe_j-\bfe_{j+1}-\bfe_{2n+1-j}+\bfe_{2n+2-j})_i-(\bfe_j-\bfe_{j+1}-\bfe_{2n+1-j}+\bfe_{2n+2-j})_{i+1}}\hat{F}_{j}(w)(f)\\
	&=q^{c_{i,j}-c_{\tau i,j}}\hat{F}_{j}(w)(f)\\
	&=q^{c_{\tau i,\tau j}-c_{i,\tau j}}\hat{F}_{j}(w)(f).
\end{align*}
If $j=n$, it follows by \eqref{eq:3K} that  $\hat{\K}_i\hat{B}_{n}(w) = \hat{B}_{n}(w)\hat{\K}_i = q^{c_{\tau i,n} -c_{i,n}}\hat{B}_{n}(w)\hat{\K}_i$, since $\hat{B}_n(w)$ maps each $\underline{\bfR}^{[\bfv]^\fc}$ to itself and $c_{\tau i,n}=c_{i,n}$.

\subsection{Relation \eqref{iDR BTh}} 

Introduce the delta function 
\[
\delta(z)=\sum_{k\in\Z} z^k.
\]
Recall Relation \eqref{iDR BTh} states that 
\[\bB_j(w)\bTh_i(z)= \frac{(1 -q^{c_{ij}}zw^{-1}) (1 -q^{-c_{\tau i,j}}zw C)}{(1 -q^{-c_{ij}}zw^{-1})(1 -q^{c_{\tau i,j}} zw C)}\bTh_i(z)\bB_j(w),
\]
which is equivalent to 
\begin{align} \label{eq:BKTh}
\bB_j(w)  \K_{\tau i}\bTh_i(z)= q^{c_{\tau i,j}-c_{ij}}\frac{(1 -q^{c_{ij}}zw^{-1}) (1 -q^{-c_{\tau i,j}}zw C)}{(1 -q^{-c_{ij}}zw^{-1})(1 -q^{c_{\tau i,j}} zw C)}\K_{\tau i}\bTh_i(z) \bB_j(w).
\end{align}
Since $C\mapsto q^{2n}$, $\hat{\K}_{ i}\hat{\bTh}_{\tau i}(z)= \big(\hat{\K}_{\tau i}\hat{\bTh}_i(z) \big)|_{z\mapsto z^{-1}q^{-2n}}$, and $q^{c_{\tau i,j}-c_{ij}}\frac{(1 -q^{c_{ij}}zw^{-1}) (1 -q^{-c_{\tau i,j}}zw C)}{(1 -q^{-c_{ij}}zw^{-1})(1 -q^{c_{\tau i,j}} zw C)}$ is invariant under the change $i\leftrightarrow \tau i$, $z\leftrightarrow z^{-1}q^{-2n}$, we only need to check the relation \eqref{eq:BKTh}, for $1\leq i\leq n$. We separate these into 2 cases.

\underline{Case (1): $1\leq i\leq n-1$}. 

We prove it when $1\leq j\leq n$, as the case of $n+1\leq j\leq 2n-1$ follows in the same way. The case for $|j-i|\geq 2$ is trivial. So we only need check the cases for $j\in\{i-1,i,i+1\}$. If $j=i-1$, then we have 
\begin{align*}
	&(\hat{E}_{i-1}(w)\hat{\bTh}_{i}(z)f)(x_{[\bfv]^\fc})\\
	&= \sum_{r\in [\bfv]_{i-1}}\delta(q^{i-1}x_rw)\Phi_{[\bfv]_{i-1}\setminus\{r\}}(qx_r)\cdot (\hat{\bTh}_{i}(z)f)(x_{\tau_r^+[\bfv]^\fc})\\
	&= q^{v_{i+1}-v_i-1}\Phi_{i,\bfv}(z^{-1})\sum_{r\in [\bfv]_{i-1}}\delta(q^{i-1}x_rw)\theta_1(q^{1-i}z^{-1}x_r^{-1})\Phi_{[\bfv]_{i-1}\setminus\{r\}}(qx_r)f(x_{\tau_r^+[\bfv]^\fc})\\
	&= q^{-1}\theta_1(wz^{-1})(\hat{\bTh}_{i}(z)\hat{E}_{i-1}(w)f)(x_{[\bfv]^\fc})\\
	&= \frac{1-q^{-1}zw^{-1}}{1-qzw^{-1}}(\hat{\bTh}_{i}(z)\hat{E}_{i-1}(w)f)(x_{[\bfv]^\fc}),
\end{align*}
where the third equality follows from the fact that 
\[
\delta(q^{i-1}x_rw)\theta_1(q^{1-i}z^{-1}x_r^{-1})=\delta(q^{i-1}x_rw)\theta_1(wz^{-1}).
\]
If $j=i$, then we have 
\begin{align*}
	&(\hat{E}_{i}(w)\hat{\bTh}_{i}(z)f)(x_{[\bfv]^\fc})\\
	&= \sum_{r\in [\bfv]_{i}}\delta(q^{i}x_rw)\Phi_{[\bfv]_{i}\setminus\{r\}}(qx_r)\cdot (\hat{\bTh}_{i}(z)f)(x_{\tau_r^+[\bfv]^\fc})\\
	&= q^{v_{i+1}-v_i+2}\Phi_{i,\bfv}(z^{-1})\sum_{r\in [\bfv]_{i}}\delta(q^{i}x_rw)\theta_1(q^{-1-i}z^{-1}x_r^{-1})^{-1}\theta_1(q^{1-i}z^{-1}x_r^{-1})^{-1}\Phi_{[\bfv]_{i}\setminus\{r\}}(qx_r)f(x_{\tau_r^+[\bfv]^\fc})\\
	&= q^2\frac{\theta_1(qzw^{-1})}{\theta_1(qwz^{-1})}(\hat{\bTh}_{i}(z)\hat{E}_{i}(w)f)(x_{[\bfv]^\fc})\\
	&= \frac{1-q^{2}zw^{-1}}{1-q^{-2}zw^{-1}}(\hat{\bTh}_{i}(z)\hat{E}_{i}(w)f)(x_{[\bfv]^\fc}).
\end{align*}
If $j=i+1\leq n-1$, then we have 
\begin{align*}
	&(\hat{E}_{i+1}(w)\hat{\bTh}_{i}(z)f)(x_{[\bfv]^\fc})\\
	&= \sum_{r\in [\bfv]_{i+1}}\delta(q^{i+1}x_rw)\Phi_{[\bfv]_{i+1}\setminus\{r\}}(qx_r)\cdot (\hat{\bTh}_{i}(z)f)(x_{\tau_r^+[\bfv]^\fc})\\
	&= q^{v_{i+1}-v_i-1}\Phi_{i,\bfv}(z^{-1})\sum_{r\in [\bfv]_{i+1}}\delta(q^{i+1}x_rw)\theta_1(q^{-1-i}z^{-1}x_r^{-1})\Phi_{[\bfv]_{i+1}\setminus\{r\}}(qx_r)f(x_{\tau_r^+[\bfv]^\fc})\\
	&= q^{-1}\theta_1(wz^{-1})(\hat{\bTh}_{i}(z)\hat{E}_{i+1}(w)f)(x_{[\bfv]^\fc})\\
	&= \frac{1-q^{-1}zw^{-1}}{1-qzw^{-1}}(\hat{\bTh}_{i}(z)\hat{E}_{i+1}(w)f)(x_{[\bfv]^\fc}).
\end{align*}
If $j=i+1=n$, then we have 
\begin{align*}
	&(\hat{B}_n(w)\hat{\bTh}_{n-1}(z)f)(x_{[\bfv]^\fc})\\
	&= \sum_{r\in [\bfv]_n}\delta(q^nx_rw)\theta_1(qx_r^2)\Phi_{[\bfv]_n\setminus\{r\}}(qx_r)\cdot (\hat{\bTh}_{n-1}(z)f)(x_{\iota_r[\bfv]^\fc})\\
	&= q^{v_{n}-v_{n-1}}\Phi_{n-1,\bfv}(z^{-1})\sum_{r\in [\bfv]_n}\delta(q^nx_rw)\theta_1(q^{-n}z^{-1}x_r^{-1}) \times \\
 &\qquad\qquad\qquad\theta_1(q^{-n}z^{-1}x_r)^{-1}\theta_1(qx_r^2)\Phi_{[\bfv]_n\setminus\{r\}}(qx_r)f(x_{\iota_r[\bfv]^\fc})\\
	&= \theta_1(wz^{-1})\theta_1(q^{2n}zw)(\hat{\bTh}_{n-1}(z)\hat{B}_n(w)f)(x_{[\bfv]^\fc})\\
	&= \frac{(1 -q^{-1}zw^{-1}) (1 -qzw C)}{(1 - qzw^{-1})(1 -q^{-1}zw C)}(\hat{\bTh}_{n-1}(z)\hat{B}_n(w)f)(x_{[\bfv]^\fc}).
\end{align*}

\underline{Case (2): $i=n$}. 

For $j\notin \{n-1,n,n+1\}$, the relation \eqref{eq:BKTh} is clear. \\
If $j=n-1$, then we have 
\begin{align*}
	&(\hat{E}_{n-1}(w)\hat{\bTh}_{n}(z)f)(x_{[\bfv]^\fc})\\
	=&\sum_{r\in [\bfv]_{n-1}}\delta(q^{n-1}x_rw)\Phi_{[\bfv]_{n-1}\setminus\{r\}}(qx_r)\cdot (\hat{\bTh}_{n}(z)f)(x_{\tau_r^+[\bfv]^\fc})\\
	=&\Phi_{n,\bfv}(z^{-1})\sum_{r\in [\bfv]_{n-1}}\delta(q^{n-1}x_rw)\theta_1(q^{1-n}z^{-1}x_r^{-1})\theta_1(q^{1+n}zx_r^{-1})\Phi_{[\bfv]_{n-1}\setminus\{r\}}(qx_r)f(x_{\tau_r^+[\bfv]^\fc})\\
	=&\theta_1(wz^{-1})\theta_1(zwC)(\hat{\bTh}_n(z)\hat{E}_{n-1}(w)f)(x_{[\bfv]^\fc})\\
	=&\frac{1-q^{-1}zw^{-1}}{1-qzw^{-1}}\frac{1-qzwC}{1-q^{-1}zwC}(\hat{\bTh}_{i}(z)\hat{E}_{i-1}(w)f)(x_{[\bfv]^\fc}).
\end{align*}
If $j=n$, then we have 
\begin{align*}
	&(\hat{B}_n(w)\hat{\bTh}_{n}(z)f)(x_{[\bfv]^\fc})\\
	=&\sum_{r\in [\bfv]_n}\delta(q^nx_rw)\theta_1(qx_r^2)\Phi_{[\bfv]_n\setminus\{r\}}(qx_r)\cdot (\hat{\bTh}_{n}(z)f)(x_{\iota_r[\bfv]^\fc})\\
	=&\Phi_{n,\bfv}(z^{-1})\sum_{r\in [\bfv]_n}\delta(q^nx_rw)\theta_1(q^{1-n}z^{-1}x_r)\theta_1(q^{1+n}zx_r)\theta_1(q^{1-n}z^{-1}x_r^{-1})^{-1}\theta_1(q^{1+n}zx_r^{-1})^{-1}\\
	&\theta_1(qx_r^2)\Phi_{[\bfv]_n\setminus\{r\}}(qx_r)\cdot f(x_{\iota_r[\bfv]^\fc})\\	=&\theta_1(qC^{-1}w^{-1}z^{-1})\theta_1(qzw^{-1})\theta_1(qz^{-1}w)^{-1}\theta_1(qCzw)^{-1}(\hat{\bTh}_n(z)\hat{B}_n(w)f)(x_{[\bfv]^\fc})\\
	=&\frac{1-q^{2}zw^{-1}}{1-q^{-2}zw^{-1}}\frac{1-q^{-2}zwC}{1-q^2zwC}(\hat{\bTh}_{n}(z)\hat{B}_n(w)f)(x_{[\bfv]^\fc}).
\end{align*}
If $j=n+1$, then we have 
\begin{align*}
	&(\hat{F}_{n-1}(w)\hat{\bTh}_{n}(z)f)(x_{[\bfv]^\fc})\\
	=&\sum_{r\in [\bfv]_{n}}\delta(q^{n+1}x_r^{-1}w)\Phi_{[\bfv]_{n}\setminus\{r\}}(q^{-1}x_r)^{-1}\cdot (\hat{\bTh}_{n}(z)f)(x_{\tau_r^-[\bfv]^\fc})\\
	=&\Phi_{n,\bfv}(z^{-1})\sum_{r\in [\bfv]_{n}}\delta(q^{n+1}x_r^{-1}w)\theta_1(q^{1-n}z^{-1}x_r^{-1})^{-1}\theta_1(q^{1+n}zx_r^{-1})^{-1}\Phi_{[\bfv]_{n}\setminus\{r\}}(q^{-1}x_r)^{-1}f(x_{\tau_r^-[\bfv]^\fc})\\
	=&\theta_1(wz^{-1})\theta_1(zwC)(\hat{\bTh}_n(z)\hat{F}_{n-1}(w)f)(x_{[\bfv]^\fc})\\
	=&\frac{1-q^{-1}zw^{-1}}{1-qzw^{-1}}\frac{1-qzwC}{1-q^{-1}zwC}(\hat{\bTh}_{i}(z)\hat{E}_{i-1}(w)f)(x_{[\bfv]^\fc}).
\end{align*}
This completes the proof of Relation \eqref{eq:BKTh}, which is equivalent to \eqref{iDR BTh}, under $\Psi$.

\subsection{Relation \eqref{iDR Btaui} } 

Relation \eqref{iDR Btaui} states that $[\bB_i(z), \bB_{\tau i}(w)] = \frac{\bDel (zw)}{q-q^{-1}} (\K_{\tau i} \bTh_i(z) -\K_{i} \bTh_{\tau i}(w)), \text{ if }c_{i,\tau i}=0$, i.e., $i\neq n$. By symmetry, we can assume $1\leq i\leq n-1$. 

By definition, we have 
\begin{align*}
	&(\hat{E}_i(z)\hat{F}_i(w)f)(x_{[\bfv]^\fc})\\
	=&\sum_{r\in[\bfv]_i}\sum_{s\in [\bfv]_{i+1}\cup\{r\}}\delta(q^ix_rz)\delta(q^ix_sw^{-1}C^{-1})\Phi_{[\bfv]_i\setminus\{r\}}(qx_r)\Phi_{[\bfv]_{i+1}\cup\{r\}\setminus\{s\}}(q^{-1}x_s)^{-1}f(x_{\tau_s^-\tau_r^+[\bfv]^\fc}),
\end{align*}
and
\begin{align*}
	&(\hat{F}_i(w)\hat{E}_i(z)f)(x_{[\bfv]^\fc})\\
	=&\sum_{s\in [\bfv]_{i+1}}\sum_{r\in[\bfv]_i\cup\{s\}}\delta(q^ix_rz)\delta(q^{i}x_sw^{-1}C^{-1})\Phi_{[\bfv]_i\cup\{s\}\setminus\{r\}}(qx_r)\Phi_{[\bfv]_{i+1}\setminus\{s\}}(q^{-1}x_s)^{-1}f(x_{\tau_s^-\tau_r^+[\bfv]^\fc}).
\end{align*}
Thus, we have 
\begin{align*}
	&([\hat{E}_i(z),\hat{F}_i(w)]f)(x_{[\bfv]^\fc})\\
	=&f(x_{[\bfv]^\fc})\bigg(\sum_{r\in[\bfv]_i}\delta(q^ix_rz)\delta(q^{i}x_rw^{-1}C^{-1})\Phi_{[\bfv]_i\setminus\{r\}}(qx_r)\Phi_{[\bfv]_{i+1}}(q^{-1}x_r)^{-1}\\
	&-\sum_{r\in [\bfv]_{i+1}}\delta(q^ix_rz)\delta(q^{i}x_rw^{-1}C^{-1})\Phi_{[\bfv]_i}(qx_r)\Phi_{[\bfv]_{i+1}\setminus\{r\}}(q^{-1}x_r)^{-1}\bigg)\\
	=&\frac{f(x_{[\bfv]^\fc})}{q-q^{-1}}\sum_{r\in[\bfv]_i\cup [\bfv]_{i+1}}\delta(q^ix_rz)\delta(q^{i}x_rw^{-1}C^{-1})\frac{B(x_r)}{x_rA'(x_r)},
\end{align*}
where 
\begin{align*}
    A(x)&:= \prod_{r\in[\bfv]_i\cup [\bfv]_{i+1}}(x-x_r) 
    \\
    B(x)&:= \prod_{r\in[\bfv]_i}(qx-q^{-1}x_r)\prod_{r\in[\bfv]_{i+1}}(q^{-1}x-qx_r).
\end{align*}
Applying the residue theorem to $\delta(q^ixz)\delta(q^{i}xw^{-1}C^{-1})\frac{B(x)}{xA(x)}$, we obtain that 
\begin{align*}
	&([\hat{E}_i(z),\hat{F}_i(w)]f)(x_{[\bfv]^\fc})\nonumber\\
	&= \frac{f(x_{[\bfv]^\fc})\delta(zwC)}{q-q^{-1}}\left(\left(\frac{B(x)}{A(x)}\right)^+|_{x=q^{-i}z^{-1}}-\left(\frac{B(x)}{A(x)}\right)^-|_{x=q^{-i}wC}\right)\\
	&= \left(\frac{\bDel (zw)}{q-q^{-1}} (\hat{\K}_{\tau i} \hat{\bTh}_i(z) -\hat{\K}_{i} \hat{\bTh}_{\tau i}(w))f\right)(x_{[\bfv]^\fc}),
\end{align*}
where $\left(\frac{B(x)}{A(x)}\right)^\pm\in \bbC[x^{\mp 1}]$ denotes the Laurent expansion at $x=\infty$ and $x=0$, respectively, $|_{x=q^{-i}wC}$ denotes the substitution of $x$ by $q^{-i}wC$, and the last equality follows from $\frac{B(x)}{A(x)}=\Phi_{[\bfv]_i}(qx)\Phi_{[\bfv]_{i+1}}(q^{-1}x)^{-1}$.

\subsection{Relation \eqref{iDR BB0} }

Recall Relation \eqref{iDR BB0} states that $(q^{c_{ij}}z -w) \bB_i(z) \bB_j(w) +(q^{c_{ij}}w-z) \bB_j(w) \bB_i(z)=0,  \text{ if } j \neq \tau i$.

By the symmetry, we can assume $i\leq j$. There are three cases to consider depending on the values of $c_{ij}$.

\underline{Case (a): $c_{ij}=0$}. The only nontrivial case is that $j=\tau(i-1)$ and $i\leq n-1$. It follows by definition that 
	\begin{align*}
		&(\hat{E}_{i}(z)\hat{F}_{i-1}(w)f)(x_{[\bfv]^\fc})\\
		=&\sum_{r\in [\bfv]_i}\delta(q^ix_rz)\Phi_{[\bfv]_i\setminus \{r\}}(qx_r)\cdot (\hat{F}_{i-1}(w)f)(x_{\tau_r^+[\bfv]^\fc})\\
		=&\sum_{r,s\in [\bfv]_i,r\neq s}\delta(q^ix_rz)\delta(q^{i-1}x_sw^{-1}C^{-1})\Phi_{[\bfv]_i\setminus \{r\}}(qx_r)\Phi_{[\bfv]_i\setminus \{r,s\}}(q^{-1}x_s)^{-1}f(x_{\tau_s^-\tau_r^+[\bfv]^\fc}),
	\end{align*}
	and
	\begin{align*}
		&(\hat{F}_{i-1}(w)\hat{E}_{i}(z)f)(x_{[\bfv]^\fc})\\
		=&\sum_{r,s\in [\bfv]_i,r\neq s}\delta(q^ix_rz)\delta(q^{i-1}x_sw^{-1}C^{-1})\Phi_{[\bfv]_i\setminus \{s,r\}}(qx_r)\Phi_{[\bfv]_i\setminus \{s\}}(q^{-1}x_s)^{-1}f(x_{\tau_r^+\tau_s^-[\bfv]^\fc}).
	\end{align*}
Since $\theta_1(qx_r/x_s)=\theta_1(q^{-1}x_s/x_r)^{-1}$, we have $\hat{E}_{i}(z)\hat{F}_{i-1}(w)=\hat{F}_{i-1}(w)\hat{E}_{i}(z)$.
	
\underline{Case (b): $c_{ij}=-1$}. By the assumption $i\leq j$, we have $j=i+1$. Let us first consider the case $i<n-1$. 
	By definition, we have
	\begin{align*}
		&(\hat{E}_{i+1}(w)\hat{E}_i(z)f)(x_{[\bfv]^\fc})\\
		&= \sum_{s\in [\bfv]_{i+1}}\delta(q^{i+1}x_sw)\Phi_{[\bfv]_{i+1}\setminus\{s\}}(qx_s)\cdot (\hat{E}_{i}(z)f)(x_{\tau_s^+[\bfv]^\fc})\\
		&= \sum_{s\in [\bfv]_{i+1}}\sum_{r\in [\bfv]_i}\delta(q^ix_rz)\delta(q^{i+1}x_sw)\Phi_{[\bfv]_{i}\setminus\{r\}}(qx_r)\Phi_{[\bfv]_{i+1}\setminus\{s\}}(qx_s)\cdot f(x_{\tau_r^+\tau_s^+[\bfv]^\fc}).
	\end{align*}
	On the other hand, we have
	\begin{align*}
		&(\hat{E}_i(z)\hat{E}_{i+1}(w)f)(x_{[\bfv]^\fc})\\
		&= \sum_{r\in [\bfv]_i}\delta(q^ix_rz)\Phi_{[\bfv]_i\setminus\{r\}}(qx_r)\cdot (\hat{E}_{i+1}(w)f)(x_{\tau_r^+[\bfv]^\fc})\\
		&= \sum_{r\in [\bfv]_i}\sum_{s\in [\bfv]_{i+1}\cup\{r\}}\delta(q^ix_rz)\delta(q^{i+1}x_sw)\Phi_{[\bfv]_i\setminus\{r\}}(qx_r)\Phi_{[\bfv]_{i+1}\cup\{r\}\setminus\{s\}}(qx_s)\cdot f(x_{\tau_s^+\tau_r^+[\bfv]^\fc})\\
		&= \theta_1(z/w)(\hat{E}_{i+1}(w)\hat{E}_i(z)f)(x_{[\bfv]^\fc})\\
		&+\sum_{r\in [\bfv]_i}\delta(q^ix_rz)\delta(q^{i+1}x_rw)\Phi_{[\bfv]_i\setminus\{r\}}(qx_r)\Phi_{[\bfv]_{i+1}}(qx_r)\cdot f(x_{\tau_r^+\tau_r^+[\bfv]^\fc})
	\end{align*}
	Using $(q^{-1}z-w)\delta(q^ix_rz)\delta(q^{i+1}x_rw)=0$, we get 
	\[(q^{-1}z-w)\hat{E}_i(z)\hat{E}_{i+1}(w)=(z-q^{-1}w)\hat{E}_{i+1}(w)\hat{E}_i(z).\]
	The case of $i\geq n+1$ follows in the same way. 
 
 Finally, let us prove the case $i=n-1$ (the case of $i=n$ will follow similarly). By definition, we have 
	\begin{align*}
		&(\hat{B}_n(w)\hat{E}_{n-1}(z)f)(x_{[\bfv]^\fc})\\
		&= \sum_{s\in [\bfv]_n}\delta(q^nx_sw)\theta_1(qx_s^2)\Phi_{[\bfv]_n\setminus\{s\}}(qx_s)\cdot (\hat{E}_{n-1}(z)f)(x_{\iota_s[\bfv]^\fc})\\
		&= \sum_{s\in [\bfv]_n}\sum_{r\in [\bfv]_{n-1}}\delta(q^{n-1}x_rz)\delta(q^nx_sw)\Phi_{[\bfv]_{n-1}\setminus\{r\}}(qx_r)\theta_1(qx_s^2)\Phi_{[\bfv]_n\setminus\{s\}}(qx_s)\cdot f(x_{\tau_r^+\iota_s[\bfv]^\fc}).
	\end{align*}
	On the other hand, we have
	\begin{align*}
		&(\hat{E}_{n-1}(z)\hat{B}_n(w)f)(x_{[\bfv]^\fc})\\
		&= \sum_{r\in [\bfv]_{n-1}}\delta(q^{n-1}x_rz)\Phi_{[\bfv]_{n-1}\setminus\{r\}}(qx_r)\cdot (\hat{B}_n(w)f)(x_{\tau_r^+[\bfv]^\fc})\\
		&= \sum_{r\in [\bfv]_{n-1}}\sum_{s\in [\bfv]_n\cup\{r\}}\delta(q^{n-1}x_rz)\delta(q^nx_sw) \times \\
  &\qquad\qquad\qquad \Phi_{[\bfv]_{n-1}\setminus\{r\}}(qx_r)\theta_1(qx_s^2)\Phi_{[\bfv]_n\cup\{r\}\setminus\{s\}}(qx_s)\cdot f(x_{\iota_s\tau_r^+[\bfv]^\fc})
  \\
		&= \theta_1(z/w)(\hat{B}_n(w)\hat{E}_{n-1}(z)f)(x_{[\bfv]^\fc})\\
		&+\sum_{r\in [\bfv]_{n-1}}\delta(q^{n-1}x_rz)\delta(q^nx_rw)\Phi_{[\bfv]_{n-1}\setminus\{r\}}(qx_r)\theta_1(qx_r^2)\Phi_{[\bfv]_n}(qx_r)\cdot f(x_{\iota_r\tau_r^+[\bfv]^\fc}).
	\end{align*} 
	Since $(q^{-1}z-w)\delta(q^{n-1}x_rz)\delta(q^nx_rw)=0$, we have 
	\[(q^{-1}z-w)\hat{E}_{n-1}(z)\hat{B}_n(w)=(z-q^{-1}w)\hat{B}_n(w)\hat{E}_{n-1}(z).\]
	
\underline{Case (c): $c_{ij}=2$}. In this case, $i\neq n$.
	Let us first consider the case for $1\leq i\leq n-1$. By definition,
	\begin{align*}
	&(\hat{E}_i(z)\hat{E}_i(w)f)(x_{[\bfv]^\fc})\\
	&= \sum_{r, s\in [\bfv]_i, r\neq s}\delta(q^ix_rz)\delta(q^ix_sw)\Phi_{[\bfv]_i\setminus \{r\}}(qx_r)\Phi_{[\bfv]_i\setminus \{r,s\}}(qx_s)f(x_{\tau_s^+\tau_r^+[\bfv]^c})\\
	&= \theta_1(qw/z)\sum_{r, s\in [\bfv]_i, r\neq s}\delta(q^ix_rz)\delta(q^ix_sw)\Phi_{[\bfv]_i\setminus \{r,s\}}(qx_r)\Phi_{[\bfv]_i\setminus \{r,s\}}(qx_s)f(x_{\tau_s^+\tau_r^+[\bfv]^c}).
    \end{align*}
	Hence,
	\[(q^2z-w)\hat{E}_i(z)\hat{E}_i(w)=(z-q^2w)\hat{E}_i(w)\hat{E}_i(z).\] 
	The case for $n+1\leq i$ is entirely similar.

\subsection{Relation \eqref{iDR BBii}} Recall Relation \eqref{iDR BBii} states that $(q^2z-w) \bB_n(z) \bB_n(w) +(q^2w-z) \bB_n(w) \bB_n(z)= \frac{\bDel(zw)  \K_{n} q^{-2}}{q-q^{-1}} \frac{(z-q^2w)(w-q^2z)}{z-w} (\bTh_n(z) -\bTh_n(w))$.

It follows by definition that
\begin{align*}
	&\hat{B}_n(z)\hat{B}_n(w)f\\
	&= \sum_{j\in [\bfv]_n}\delta(zx_jq^n)\theta_1(qx_j^2)\Phi_{[\bfv]_n\setminus\{j\}}(qx_j)\cdot \iota_j(\hat{B}_n(w)f)\\
	&= \sum_{i\neq j\in [\bfv]_n}\delta(zx_jq^n)\delta(wx_iq^n)\theta_1(qx_j^2)\Phi_{[\bfv]_n\setminus\{j,i\}}(qx_j) \times \\
&\qquad\qquad \theta_1(qx_j/x_i)\theta_1(qx_i^2)\Phi_{[\bfv]_n\setminus\{i,j\}}(qx_i)\theta_1(qx_ix_j)\cdot \iota_j\iota_i f\\
	&\quad +\sum_{j\in [\bfv]_n}\delta(zx_jq^n)\delta(wx_j^{-1}q^n)\theta_1(qx_j^2)\theta_1(qx_j^{-2})\Phi_{[\bfv]_n\setminus\{j\}}(qx_j)\Phi_{[\bfv]_n\setminus\{j\}}(qx_j^{-1}) f.
\end{align*}
Using the identities 
\begin{align*}
\delta(zx_jq^n)\delta(wx_iq^n)\theta_1(qx_j/x_i) &=\delta(zx_jq^n)\delta(wx_iq^n)\theta_1(qw/z),
\\
\delta(zx_jq^n)\delta(wx_j^{-1}q^n)\theta_1(qx_j^2) &=\delta(zx_jq^n)\delta(wx_j^{-1}q^n)\theta_1(qw/z), 
\end{align*}
we have
\begin{align*}
	&(q^2z-w)\hat{B}_n(z)\hat{B}_n(w)f+(q^2w-z)\hat{B}_n(w)\hat{B}_n(z)f\\
	&= \frac{(q^2z-w)(q^2w-z)}{qw-qz}\bigg(\sum_{j\in [\bfv]_n}\delta(zx_jq^n)\delta(wx_j^{-1}q^n)\Phi_{[\bfv]_n\setminus\{j\}}(qx_j)\Phi_{[\bfv]_n}(qx_j^{-1}) \\
	&-\sum_{j\in [\bfv]_n}\delta(wx_jq^n)\delta(zx_j^{-1}q^n)\Phi_{[\bfv]_n\setminus\{j\}}(qx_j)\Phi_{[\bfv]_n}(qx_j^{-1}) \bigg)f\\
	&= \frac{f}{q-q^{-1}}\frac{(q^2z-w)(q^2w-z)}{qw-qz}\Res'\bigg(\frac{\delta(zxq^n)\delta(wx^{-1}q^n)\Phi_{[\bfv]_n}(qx)\Phi_{[\bfv]_n}(qx^{-1})}{x}\bigg)\\
	&= \frac{f\delta(q^Nzw)}{q-q^{-1}}\frac{(q^2z-w)(q^2w-z)}{qw-qz} \times \\
  &\qquad \qquad \bigg(\big(\Phi_{[\bfv]_n}(qx)\Phi_{[\bfv]_n}(qx^{-1})\big)^+|_{x=q^{-n}z^{-1}}-\big(\Phi_{[\bfv]_n}(qx)\Phi_{[\bfv]_n}(qx^{-1})\big)^-|_{x=q^{n}w}\bigg)
 \\
	&= \frac{f\bDel(zw)}{q-q^{-1}}\frac{(q^2z-w)(q^2w-z)}{qw-qz} \times \\
 &\qquad\qquad \bigg(\Phi_{[\bfv]_n}(q^{1-n}z^{-1})\Phi_{[\bfv]_n}(q^{1+n}z)-\Phi_{[\bfv]_n}(q^{1+n}w)\Phi_{[\bfv]_n}(q^{1-n}w^{-1})\bigg)\\
	&= \frac{\bDel(zw)  \hat{\K}_{\alpha_n} q^{-2}}{q-q^{-1}} \frac{(z-q^2w)(w-q^2z)}{z-w} (\hat{\bTh}_n(z) -\hat{\bTh}_n(w))f,
\end{align*}
where $\Res'$ denotes the sum of the residues of $\frac{\delta(zxq^n)\delta(wx^{-1}q^n)\Phi_{[\bfv]_n}(qx)\Phi_{[\bfv]_n}(q^{-1})}{x}$ at the singular points $x_j$ and $x_j^{-1}$ for $j\in [\bfv]_n$, the third equality follows from the residue theorem, and $(\Phi_{[\bfv]_n}(qx)\Phi_{[\bfv]_n}(qx^{-1}))^+$ (respectively, $(\Phi_{[\bfv]_n}(qx)\Phi_{[\bfv]_n}(qx^{-1}))^-$) denotes the expansion at $x=\infty$ (respectively, $x=0$).

\section{A K-theoretic realization of the affine iquantum group}
 \label{sec:K}

In this section, we first introduce certain $G\times\bbC^*$-equivariant sheaves on the Steinberg variety $\calZ$.
Through the convolution action $\underline{K}^{G\times \bbC^*}(\calZ) \curvearrowright \underline{K}^{G\times \bbC^*}(\calM)$, we show that the class of these sheaves act on 
\begin{equation}\label{equ:KM}
\underline{K}^{G\times \bbC^*}(\calM)\simeq \underline{K}^{G\times \bbC^*}(\calF)\simeq \underline{\bfP}
\end{equation}
via the operators on $\underline{\bfP}$ defined in \S\ref{subsec:poly}. Then
we construct an algebra homomorphism from the iquantum group $\tUi$ to the convolution algebra  $\underline{K}^{G\times \bbC^*}(\calZ)$. 

\subsection{The $\bTh$ operator}\label{subsec:Theta}

Recall that $E_{ij}^{\theta}, E_{ij}^{\theta}(\bfv,a)$ are defined in \eqref{eq:Eijv}, and recall the standard basis $\bfe_i$ $(1\le i\le N)$ for $\C^{N}$. 

Recall the functions $\Phi_{i,\bfv}$ introduced in Equation \eqref{equ:Phi}. For $1\leq i\leq n-1$ (and hence $n+1 \le \tau i \le 2n-1$) and $k\ge 1$, we denote by $\hat{\bTh}_{i,\bfv,k}$ (respectively, $\hat{\bTh}_{\tau i,\bfv, k}$) the coefficient of $z^k$ in the series expansion of 
\[
(q-q^{-1})^{-1} q^{v_{i+1}-v_i} \cdot \Phi_{[\bfv]_i}(q^{1-i}z^{-1})\Phi_{[\bfv]_{i+1}}(q^{-1-i}z^{-1})^{-1}
\] 
\big(respectively, $(q-q^{-1})^{-1} q^{v_{i}-v_{i+1}} \cdot \Phi_{[\bfv]_i}(q^{1-i+N}z)\Phi_{[\bfv]_{i+1}}(q^{-1-i+N}z)^{-1}$\big) at $z=0$. 
Denote by $\hat{\bTh}_{n,\bfv,k}$ the coefficient of $z^k$ in the expansion of $(q-q^{-1})^{-1} \cdot 
\Phi_{[\bfv]_n}(q^{1-n}z^{-1})\Phi_{[\bfv]_n}(q^{1+n}z)$ at $z=0$, for $k\ge 1$.

By construction, we have $\hat{\bTh}_{i,\bfv, k}\in \mbf R_\bbA^{[\mbf v]^{\fc}}$, for $1\leq i\leq 2n-1$ and $k\ge 1$. Denote
\[
\hat{\bTh}_{i,k}:=\sum_{\bfv}\hat{\bTh}_{i,\bfv,k}.
\]
The cotangent bundle $\calM$ embeds diagonally into the Steinberg variety $\calZ$, and by the isomorphism \eqref{equ:KM}, $\hat{\bTh}_{i,k}$ can also be regarded as equivariant K-theory class on $\calZ$, i.e., $\hat{\bTh}_{i,k} \in K^{G\times \bbC^*}(\calZ)$.  

\subsection{The $E$ operators}
  \label{subsec:Eoperators}

Let $1\leq i\leq n-1$. One sees that $E_{i,i+1}^{\theta}(\bfv,1)$ is minimal under the order on $\Xi_d$ defined in Equation \eqref{equ:order}. Therefore, the orbit $\mathcal{O}_{E_{i,i+1}^{\theta}(\bfv,1)}$ is closed by Proposition \ref{prop:orbit}.

By definition, we have
\begin{align*}
\mathcal{O}_{E_{i,i+1}^{\theta}(\bfv,1)}&= \bigg\{(F,F')\mid \substack{F=(V_k)_k\in \mcal{F}_{\bfv + \mathbf{e}_i  + \mathbf{e}_{N + 1-i} }\\ F'=(V'_k)_k\in \mcal{F}_{\bfv + \mathbf{e}_{i+1}  + \mathbf{e}_{N -i}}}, V'_i \stackrel{1}{\subset} V_i, V_k=V'_k \textit{ if } k\neq i,N-i \bigg\}.
\end{align*}
Let $\calL$ be the line bundle on $\mathcal{O}_{E_{i,i+1}^{\theta}(\bfv,1)}$ whose fiber at $(F,F')$ is $V_i/V'_i$. Under the isomorphism $K^{G}(\mathcal{O}_{E_{i,i+1}^{\theta}(\bfv,1)})\cong \mathbf{R}^{[E_{i,i+1}^{\theta}(\bfv,1)]^{\fc}}$ in Proposition \ref{prop:pushforward}(a), this line bundle $\calL$ corresponds to $x_{1+\bar{v}_{i}}\in \mathbf{R}^{[E_{i,i+1}^{\theta}(\bfv,1)]^{\fc}}$. Notice that $[E_{i,i+1}^{\theta}(\bfv,1)]_{i,i+1}=[1+\bar{v}_{i},1+\bar{v}_{i}]$, 
$x_{1+\bar{v}_{i}}\in \mathbf{R}^{[E_{i,i+1}^{\theta}(\bfv,1)]^{\fc}}$.

Let $p_1: \mathcal{O}_{E_{i,i+ 1}^{\theta}(\bfv,1)} \rightarrow \mcal{F}_{\bfv + \mathbf{e}_i  + \mathbf{e}_{N + 1-i} }$ be the first projection. Let $T^{*}_{p_1}$ be the relative cotangent sheaf along the fibers of $p_1$ and $\mathrm{Det}(T^{*}_{p_1})$ be the determinant line bundle on $\mathcal{O}_{E_{i,i+ 1}^{\theta}(\bfv,1)}$. The fiber of $p_1$ at $F\in \mcal{F}_{\bfv + \mathbf{e}_i  + \mathbf{e}_{N + 1-i} }$ is the Grassmannian $\Gr(v_i, V_i/V_{i-1})$. Therefore, we have 
\[T^*_{p_{1}}=\sum_{ \bar{v}_{i-1} < t \leq  \bar{v}_{i}} x_t/x_{\bar{v}_{i}+1}\in K^{G}(\mathcal{O}_{E_{i,i+1}^{\theta}(\bfv,1)}).\]

Let $\pi: \mcal{Z}_{E_{i,i+1}^{\theta}(\bfv,1)} \rightarrow \mathcal{O}_{E_{i,i+ 1}^{\theta}(\bfv,1)}$ be the projection. For any $1\leq i\leq n-1$ and $k\in \bbZ$, define
\begin{align}\label{equ:Eivk}
    \mathscr{E}_{i,\bfv,k} &=  \pi^{*} \big(\mathrm{Det} \, T^{*}_{p_1} \otimes \calL^{\otimes k} \big)\in K^{G\times \bbC^*}(\calZ_{E_{i,i+1}^{\theta}(\bfv,1)}),
\\
\mathscr{E}_{i,k} &= \sum\limits_{\bfv} (-q)^{-v_i}\mathscr{E}_{i,\bfv,k}\in K^{G\times \bbC^*}(\calZ). \notag
\end{align}

Let us compute the convolution operators on $K^{G\times \bbC^*}(\calM)\simeq \oplus_{\mbf v\in \Lambda_{\fc,d}} \bfR^{[\mbf v]^{\fc}}[q,q^{-1}]$ (see \eqref{KMR}) corresponding to  $\mathscr{E}_{i,k}$. Recall the operators 
\[
\hat{E}_{i,k}\in \bigoplus_{\mbf v\in \Lambda_{\fc,d}} \Hom_{R(G)[q,q^{-1}]} \big(\bfR^{[\mbf v']^{\fc}}[q,q^{-1}],\mathbf{R}^{[\bfv]^{\fc}}[q,q^{-1}] \big)
\]
from Equation \eqref{equ:Ei}, where $\mathbf{v'}=\bfv-\bfe_i+\bfe_{i+1}+\bfe_{2n-i}-\bfe_{2n+1-i}$.

\begin{prop}\label{prop:actions1}
	For any $1 \leq i \leq n-1$ and $k\in\bbZ$, the convolution action of $\mathscr{E}_{i,k}$ on $K^{G\times \bbC^*}(\calM)\simeq \oplus_{\mbf v\in \Lambda_{\fc,d}} \bfR^{[\mbf v]^{\fc}}[q,q^{-1}]$ is given by the operator $\hat{E}_{i,k}$.
\end{prop}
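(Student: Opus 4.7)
The plan is to unpack both sides of the claim at the base point of the orbit and verify they match term by term. First I will apply Lemma~\ref{lem:convolution} to rewrite the action of $\mathscr{E}_{i,\mbf u,k}$ on $\pi_2^{\ast}f$ as the pullback under $\pi_1$ of a pushforward along the first projection $p_1\colon\mathcal{O}_A \to \calF_{\mbf v}$, where $A=E_{i,i+1}^\theta(\mbf u,1)$ and $\mbf v = \mbf u+\bfe_i+\bfe_{N+1-i}$. Because $1\le i\le n-1$, the orbit $\mathcal{O}_A$ is closed by Proposition~\ref{prop:orbit}(1), so Proposition~\ref{prop:pushforward}(b) applies without localization and gives
\[
\rho_{\mathscr{E}_{i,\mbf u,k}}(\pi_2^{\ast}f) \;=\; \pi_1^{\ast}\, \frac{W_{[\mbf v]^\fc}}{W_{[A]^\fc}}\!\left(\frac{\bigwedge_{q^2} T_{p_1}\otimes \mathrm{Det}(T^{\ast}_{p_1})\otimes \calL^{\otimes k}\otimes p_{2}^{\ast}f}{\bigwedge(T^{\ast}_{p_1})}\right).
\]

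Next I will compute each factor at the torus fixed base point $\pt_A=(F_\circ,F'_\circ)$ constructed in the proof of Proposition~\ref{prop:pushforward}. The fiber of $p_1$ over $F_\circ$ is $\Gr(u_i, V_i/V_{i-1})$, and the one-codimensional subspace $V'_i\subset V_i$ at $\pt_A$ omits the basis vector $\epsilon_{j_0}$, where $j_0:=\bar{v}_{i}$ is the unique element of $[A]_{i,i+1}$. Consequently $T_{p_1}|_{\pt_A}$ has weights $\{x_{j_0}/x_t : t\in [\mbf v]_i\setminus\{j_0\}\}$, and $\calL|_{\pt_A}=x_{j_0}$. A direct calculation then gives
\[
\frac{\bigwedge_{q^2}(T_{p_1})\cdot \mathrm{Det}(T^{\ast}_{p_1})\cdot \calL^{\otimes k}}{\bigwedge(T^{\ast}_{p_1})}\bigg|_{\pt_A} \;=\; x_{j_0}^{k}\prod_{t\ne j_0}\frac{x_t-q^2 x_{j_0}}{x_{j_0}-x_t} \;=\; (-q)^{u_i}\, x_{j_0}^{k}\, \Phi_{[\mbf v]_i\setminus\{j_0\}}(qx_{j_0}),
\]
using the identity $\theta_1(qx_{j_0}/x_t)=(q^2x_{j_0}-x_t)/(q(x_{j_0}-x_t))$. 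The overall prefactor $(-q)^{u_i}$ is then precisely cancelled by the normalization $(-q)^{-v_i}=(-q)^{-u_i}$ appearing in the definition \eqref{equ:Eivk} of $\mathscr{E}_{i,k}$. Meanwhile, the identification of $p_2^{\ast}f$ at $\pt_A$ corresponds to evaluating $f$ at the variables labelled by the column partition at $F'_\circ$, which is exactly $\tau_{j_0}^{+}[\mbf v]^\fc$.

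Finally, since $[A]^\fc$ refines $[\mbf v]^\fc$ only by splitting off the singleton $\{j_0\}$ from $[\mbf v]_i$, a set of coset representatives for $W_{[\mbf v]^\fc}/W_{[A]^\fc}$ is indexed by the choices $j_0\in[\mbf v]_i$. Summing the base-point contribution over these representatives recovers
\[
\sum_{j_0\in [\mbf v]_i} x_{j_0}^{k}\, \Phi_{[\mbf v]_i\setminus\{j_0\}}(qx_{j_0})\, f(x_{\tau_{j_0}^{+}[\mbf v]^\fc}),
\]
which agrees with $(\hat E_{i,k}f)(x_{[\mbf v]^\fc})$ by \eqref{equ:Ei}. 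The main bookkeeping obstacle will be tracking the $\bbC^{\ast}$-weights and the signs carefully enough so that the factor $(-q)^{u_i}$ from the Thom class and determinant cancels the normalization in $\mathscr{E}_{i,k}$; once this is in hand, the rest is a routine identification of the Weyl-coset sum with the sum over $[\mbf v]_i$ appearing in the definition of $\hat E_{i,k}$.
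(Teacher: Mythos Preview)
Your proof is correct and follows essentially the same approach as the paper: apply Lemma~\ref{lem:convolution}, then the pushforward formula of Proposition~\ref{prop:pushforward}(b), identify the tangent weights and line bundle restrictions at the base point, and recognize the Weyl-coset sum as the sum over $[\mbf v]_i$ in \eqref{equ:Ei}. The only blemish is the sentence ``the normalization $(-q)^{-v_i}=(-q)^{-u_i}$'': in your conventions $v_i=u_i+1$, so this literal equality is false; what you mean (and what the computation uses) is that the dummy-variable $\bfv$ in the definition \eqref{equ:Eivk} has been relabelled to your $\bfu$, so the coefficient multiplying $\mathscr{E}_{i,\mbf u,k}$ is $(-q)^{-u_i}$, and this cancels the factor $(-q)^{u_i}$ you computed.
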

\begin{proof}
	Let $p_2: \mathcal{O}_{E_{i,i+1}^{\theta}(\bfv-\bfe_i-\bfe_{2n+1-i},1)} \rightarrow \mcal{F}_{\bfv'}$ be the second projection. For any $f \in  K^{\mathbb{C}^{*}\times G}(\calF_{\bfv'})\simeq \bfR^{[\bfv']^{\fc}}[q,q^{-1}]$, the convolution action of $(-q)^{-v_i+1}\mathscr{E}_{i,\bfv-\bfe_i-\bfe_{2n+1-i},k}$ is given by 
	\begin{align*}
		&(-q)^{-v_i+1}\mathscr{E}_{i,\bfv-\bfe_i-\bfe_{2n+1-i},k}\star f\\
		&=  (-q)^{-v_i+1}Rp_{1*}\bigg(\bigwedge\nolimits_{q^2}T_{p_1}\otimes p_2^*f\otimes \pi^{*}(\mathrm{Det}\,T^{*}_{p_1} \otimes \calL^{\otimes k}) \bigg)\\
		&= (-q)^{-v_i+1}W_{[\bfv]^{\fc}}/W_{[E_{i,i+1}^{\theta}(\bfv-\bfe_i-\bfe_{2n+1-i},1)]^{\fc}}\bigg(\frac{\bigwedge\nolimits_{q^2}T_{p_1}\otimes p_2^*f\otimes \pi^{*}(\mathrm{Det}\,T^{*}_{p_1}\otimes \calL^{\otimes k})}{\bigwedge (T^*_{p_1})}\bigg)\\
		&= S_{[\bar{v}_{i-1}+1,\bar{v}_i]}/S_{[\bar{v}_{i-1}+1,\bar{v}_i-1]}\bigg(x_{\bar{v}_{i}}^k\prod_{\bar{v}_{i-1} < t \leq \bar{v}_{i}-1} \frac{q-q^{-1}x_{t}/x_{\bar{v}_{i}}}{1-x_t/x_{\bar{v}_{i}}} \cdot f\bigg)\\
		&= \sum_{j\in [\bfv]_i}s_{j,\bar{v}_i}\Big(x_{\bar{v}_{i}}^k\Phi_{[\bfv]_i\setminus\{\bar{v}_i\}}(qx_{\bar{v}_i}) \cdot f\Big)\\
		&= \sum_{j\in [\bfv]_i}x_{j}^k\Phi_{[\bfv]_i\setminus\{j\}}(qx_j) \cdot s_{j,\bar{v}_i}f.
	\end{align*}
	Here the first equality follows from Lemma \ref{lem:convolution}, the second one follows from Proposition \ref{prop:pushforward}(b), and the third one is due to $T^*_{p_{1}}=\sum_{ \bar{v}_{i-1} < t \leq  \bar{v}_{i}-1} x_t/x_{\bar{v}_{i}}$. Since $(s_{j,\bar{v}_i}f)(x_{[\bfv]^\fc})=f(x_{\tau^+_j[\bfv]^\fc})$, we can convert the last formula above to $\hat{E}_{i,k}$ in \eqref{equ:Ei} as desired. 
\end{proof}

\subsection{The $B_n$ operator}
  \label{subsec:Boperator}
  
The matrix $E^\theta_{n,n+1}(\bfv,1)$ is not minimal as $\diag(\bfv+\bfe_n+\bfe_{n+1})\prec E^\theta_{n,n+1}(\bfv,1)$. Thus,
\[\overline{\calO_{E^\theta_{n,n+1}(\bfv,1)}}=\Delta\calF_{\bfv+\bfe_n+\bfe_{n+1}}\sqcup\calO_{E^\theta_{n,n+1}(\bfv,1)},\]
where $\Delta\calF_{\bfv+\bfe_n+\bfe_{n+1}}$ denotes the diagonal copy of $\calF_{\bfv+\bfe_n+\bfe_{n+1}}$ inside $\calF_{\bfv+\bfe_n+\bfe_{n+1}}\times \calF_{\bfv+\bfe_n+\bfe_{n+1}}$. More explicitly, we have
\begin{align*}
	\mathcal{O}_{E_{n,n+1}^{\theta}(\bfv,1)}&= \bigg\{\substack{F=( 0=V_0 \subset V_1 \subset \cdots \subset V_{N}=V)\in \mcal{F}_{\bfv + \mathbf{e}_n  + \mathbf{e}_{n+1} }\\ F'=( 0=V'_0 \subset V'_1 \subset \cdots \subset V'_{N}=V)\in \mcal{F}_{\bfv + \mathbf{e}_{n}  + \mathbf{e}_{n+1}}}\ \mid \\
	& \dim V_n/V_n\cap V'_n=1, V_k=V'_k \textit{ if } k\neq n\bigg\}.
\end{align*}
Let $p_1$ and $p_2$ denote its projections to $\calF_{\bfv+\bfe_n+\bfe_{n+1}}$. Then for any $F=( 0=V_0 \subset V_1 \subset \cdots \subset V_{N}=V)\in \mcal{F}_{\bfv + \mathbf{e}_n  + \mathbf{e}_{n+1} }$, we have
\[
p_1^{-1}(F)\simeq\{V'_n\subset V \mid V'_n=(V'_n)^{\perp}, \dim V_n/V_n\cap V'_n=1 \}.
\]
To fix a point $V_n'$ on the right hand side, we can first choose a $v_n$-dimensional subspace of $V_n/V_{n-1}$, which gives $V_n'\cap V_n$. Then we can choose a line in $(V_n\cap V'_n)^\perp/(V_n\cap V'_n)$. Since $\dim(V_n\cap V'_n)^\perp/(V_n\cap V'_n)=2$ and $V_n'\neq V_n$, the fiber $p_1^{-1}(F)$ is isomorphic to an $\bbA^1$-bundle over the Grassmannian $\Gr(v_n, V_n/V_{n-1})$. Thus, the class of the  relative bundle is
\[T_{p_1}=\sum_{t\in [\bfv]_n}\frac{x_{1+\bar{v}_n}}{x_t}+x^2_{1+\bar{v}_n}=\sum_{t\in [\bfv]_n}\frac{x_d}{x_t}+x^2_d\in K^{G\times \bbC^*}(\mathcal{O}_{E_{n,n+1}^{\theta}(\bfv,1)}).\]
 
Let $\calL$ be the line bundle over $\mathcal{O}_{E_{n,n+1}^{\theta}(\bfv,1)}$ whose fiber over $(F,F')$ is $V_n/V_n\cap V'_n$. Then 
\[\calL=x_{1+\bar{v}_n}=x_d\in K^{G\times \bbC^*}(\mathcal{O}_{E_{n,n+1}^{\theta}(\bfv,1)}).\]
Let $\pi:\calZ_{E_{n,n+1}^{\theta}(\bfv,1)}\rightarrow \mathcal{O}_{E_{n,n+1}^{\theta}(\bfv,1)}$ denote the projection. Define
\[\mathscr{B}_{n,\bfv,k}:=\pi^*(\mathrm{Det} T_{p_1}^*\otimes \calL^{\otimes k}),\quad \textit{and \quad} \mathscr{B}_{n,k}=\sum_{\bfv}(-q)^{-v_n-1}\calE_{n,\bfv,k}.\]

Although $p_1$ is not proper, the pushforward $p_{1,*}$ can still be defined using localization, and it is given by the same formula in Proposition \ref{prop:pushforward}(b). Recall the operator 
\[
\hat{B}_{n,k}\in \bigoplus_{\mbf v\in \Lambda_{\fc,d}} \Hom_{\underline{R}(G)(q)} \big(
\underline{\bfR}^{[\mbf v]^{\fc}},\underline{\bfR}^{[\bfv]^{\fc}} \big)
\]
defined in Equation \eqref{equ:En}.

\begin{prop}\label{prop:actions3}
	The convolution action of $\mathscr{B}_{n,k}$ on $\underline{\bfP}$ is given by the operator $\hat{B}_{n,k}$.
\end{prop}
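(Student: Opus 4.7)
The plan is to imitate the proof of Proposition \ref{prop:actions1}, applying Lemma \ref{lem:convolution} to rewrite the convolution as a relative pushforward, but with the essential modification that $p_1 : \calO_{E_{n,n+1}^\theta(\bfv,1)}\to\calF_{\bfv+\bfe_n+\bfe_{n+1}}$ is \emph{not} proper since the orbit is not closed. Hence $p_{1,*}$ must be computed via the localization formula \eqref{equ:localization} as explained at the end of Section~\ref{subsec:convolution}; this is exactly why the target is the localized module $\underline{\bfP}$ rather than the integral $\bfP$.

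Concretely, I would proceed in three steps. First, using the class $T_{p_1}=\sum_{t\in [\bfv]_n}x_d/x_t+x_d^2$ and $\calL=x_d$ computed in Section~\ref{subsec:Boperator}, write out $\bigwedge_{q^2}T_{p_1}$ and $\mathrm{Det}\,T_{p_1}^*$ explicitly; combined with $\pi^*(\calL^{\otimes k})=x_d^k$ and $p_2^*f$, the integrand becomes an explicit rational expression on $\calO_{E_{n,n+1}^\theta(\bfv,1)}$. Second, apply the localization formula to $p_{1,*}$: the torus-fixed points in the fiber $p_1^{-1}(F)$ over a chosen torus-fixed flag $F$ are indexed by $j\in[\bfv]_n$, with the $j$-th fixed point obtained from the base point by the Weyl-group element $\iota_j$ (the Lagrangian $V'_n$ replaces the $j$-th basis vector by its dual partner), in the sense of the identifications made in the proof of Proposition~\ref{prop:pushforward}. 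Third, sum the contributions: at the $j$-th fixed point, the cotangent weights are $x_t/x_j$ for $t\in[\bfv]_n\setminus\{j\}$ coming from the Grassmannian direction $\Gr(v_n, V_n/V_{n-1})$, together with the extra weight $x_j^{-2}$ from the $\bbA^1$-direction in the fiber. Dividing by these weights, pulling back $\mathrm{Det}\,T_{p_1}^*\otimes\calL^{\otimes k}\otimes p_2^*f$ along $\iota_j$, and symmetrizing over $W_{[\bfv]^\fc}/W_{[E_{n,n+1}^\theta(\bfv,1)]^\fc}$ yields an explicit expression.

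The Grassmannian-direction factors assemble into $\Phi_{[\bfv]_n\setminus\{j\}}(qx_j)$ precisely as in the proof of Proposition~\ref{prop:actions1}; the monomial $x_j^k$ comes from $\calL^{\otimes k}$; the action $f\mapsto f(x_{\iota_j[\bfv]^\fc})$ encodes the fact that the fixed point is $\iota_j\cdot pt$; and, crucially, the extra $\bbA^1$-direction contributes the factor $(1-q^2x_d^2)/(1-x_d^{-2})$, which after the normalizing prefactor $(-q)^{-v_n-1}$ is absorbed, matches exactly $\theta_1(qx_j^2)$. Comparing with \eqref{equ:En} completes the identification with $\hat{B}_{n,k}$.

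The main obstacle is the careful sign and $q$-power bookkeeping for the $\bbA^1$-direction of the fiber: this is the one ingredient absent from the type~$A$ computation in \cite{V98} and Proposition~\ref{prop:actions1}. In particular, verifying that the combination $(1-q^2x_j^2)/(1-x_j^{-2})$ together with the normalization $(-q)^{-v_n-1}$ produces precisely the factor $\theta_1(qx_j^2)=(q^2x_j^2-1)/(q(x_j^2-1))$ appearing in \eqref{equ:En} is the one genuinely new check. Everything else reduces to the Grassmannian computation of Proposition~\ref{prop:actions1}, with $s_{j,\bar{v}_i}$ replaced by $\iota_j$ on the Weyl-group side.
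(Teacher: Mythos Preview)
Your proposal is correct and follows essentially the same route as the paper: apply Lemma~\ref{lem:convolution}, then compute the (non-proper) pushforward $p_{1,*}$ by localization, packaged in the paper via the Weyl-group symmetrization formula of Proposition~\ref{prop:pushforward}(b). The paper's proof fixes the single base point $(F,\iota_d F)$, observes $p_2^*f|_{(F,\iota_d F)}=\iota_d(f)$, and then sums over cosets $s_{j,d}\in S_{[\bar v_{n-1}+1,d]}/S_{[\bar v_{n-1}+1,d-1]}$; since $s_{j,d}\iota_d=\iota_j s_{j,d}$ and $f$ is $S_{[\bfv]_n}$-invariant this recovers exactly your description of the fixed points as $(F,\iota_j F)$, so the two computations are identical. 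One small bookkeeping point: the $\bbA^1$-direction contributes $\frac{(1-q^2x_d^2)\,x_d^{-2}}{1-x_d^{-2}}$ once the $\mathrm{Det}\,T_{p_1}^*$ factor $x_d^{-2}$ is included, and after the shift $\bfv\mapsto \bfv-\bfe_n-\bfe_{n+1}$ the normalizing prefactor becomes $(-q)^{-v_n}$, which then absorbs into $\theta_1(qx_j^2)$ and $\Phi_{[\bfv]_n\setminus\{j\}}(qx_j)$ exactly as you describe.
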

\begin{proof}
	Let $p_1,p_2: \mathcal{O}_{E_{n,n+1}^{\theta}(\bfv-\bfe_n-\bfe_{n+1},1)} \rightarrow \mcal{F}_{\bfv}$ be the two projections. We will use the notations in the proof of Proposition \ref{prop:pushforward} for the flags. The base point $(F,F')\in \mathcal{O}_{E_{n,n+1}^{\theta}(\bfv-\bfe_n-\bfe_{n+1},1)}$ with $F=(V_i)$ and $F'=(V_i')$ satisfies that $V_i=V'_i$ for $i\neq n$, $V_n/V_{n-1}=\Span\{\epsilon_i\mid 1+\bar{v}_{n-1}\leq i\leq d\}$, and $V'_n/V'_{n-1}=\Span\{\epsilon_{2d},\epsilon_i\mid 1+\bar{v}_{n-1}\leq i\leq d-1\}$. Thus, we have
	\[F'=\iota_d(F).\]
	Recall $\iota_d\in W_\fc=\bbZ_2^d\rtimes S_d$ is the non-trivial element in the d-th copy of $\bbZ_2$.
	
	Given any $\mathscr{F} \in  K^{\mathbb{C}^{*}\times G}(\mathscr{F}_{\bfv})$, let $f=\mathscr{F}|_F\in\mathbf{R}^{[\bfv]^{\fc}}[q,q^{-1}]$. Then 
	\[p_2^*(\mathscr{F})|_{(F,F')}=\mathscr{F}|_{F'}=\iota_d(f).\]
	Thus, $p_2^*(\mathscr{F})$ corresponds to $\iota_d(f)$ under the isomorphism in Proposition \ref{prop:pushforward}(b).
	
	The convolution action of $(-q)^{-v_n}\mathscr{B}_{n,\bfv-\bfe_n-\bfe_{n+1},k}$ is given by 
	\begin{align*}
		&(-q)^{-v_n}\mathscr{B}_{n,\bfv-\bfe_n-\bfe_{n+1},k}\star \calF\\
		&= (-q)^{-v_n}Rp_{1*}\Big(\bigwedge\nolimits_{q^2}T_{p_1}\otimes p_2^*\calF\otimes \pi^{*} (\mathrm{Det}\,T^{*}_{p_1}\otimes \calL^{\otimes k}) \Big)\\
		&= (-q)^{-v_n}W_{[\bfv]^{\fc}}/W_{[E_{n,n+1}^{\theta}(\bfv-\bfe_n-\bfe_{n+1},1)]^{\fc}}\bigg(\frac{\bigwedge\nolimits_{q^2}T_{p_1}\otimes p_2^*\calF\otimes \pi^{*} (\mathrm{Det}\,T^{*}_{p_1}\otimes \calL^{\otimes k})}{\bigwedge (T^*_{p_1})}\bigg)\\
		&= S_{[\bar{v}_{n-1}+1,d]}/S_{[\bar{v}_{n-1}+1,d-1]}\bigg(x_d^k\frac{q-q^{-1}x_d^{-2}}{1-x_d^{-2}}\prod_{\bar{v}_{n-1} < t \leq d-1}\frac{q-q^{-1}x_{t}/x_d}{1-x_t/x_d} \cdot \iota_d(f)\bigg)\\
		&= \sum_{j\in [\bfv]_n}s_{j,d}\Big(x_d^k\cdot\theta_1(qx_d^2)\cdot\Phi_{[\bfv]_n\setminus\{d\}}(qx_d) \cdot \iota_d(f)\Big)\\
		&= \sum_{j\in [\bfv]_n}x_{j}^k\cdot\theta_1(qx_j^2)\cdot\Phi_{[\bfv]_n\setminus\{j\}}(qx_j) \cdot \iota_j(f).
	\end{align*}
	Here the first equation follows from Lemma \ref{lem:convolution}, the second one follows from Proposition \ref{prop:pushforward}(b), the third one is due to $T_{p_{1}}=\sum_{ \bar{v}_{n-1} < t \leq  \bar{v}_{n}-1}x_d/x_t+x_d^2$, and the last one follows from the fact that $f\in\mathbf{R}^{[\bfv]^{\fc}}[q,q^{-1}]$. This finishes the proof.
\end{proof}

\subsection{The $F$ operators} \label{subsec:F}
This section is parallel to \S\ref{subsec:Eoperators}. 
For any $1\leq i\leq n-1$, the orbit $\mathcal{O}_{E_{i+1,i}^{\theta}(\bfv,1)}$ is closed, and we have
\begin{align*}
\mathcal{O}_{E_{i+1,i}^{\theta}(\bfv,1)}&=\bigg\{\substack{F=( 0=V_0 \subset V_1 \subset \cdots \subset V_{N}=V)\in \mcal{F}_{\bfv + \mathbf{e}_{i+1}  + \mathbf{e}_{N-i} }\\ F'=( 0=V'_0 \subset V'_1 \subset \cdots \subset V'_{N}=V)\in \mcal{F}_{\bfv + \mathbf{e}_i  + \mathbf{e}_{N+1-i}}}\ \mid \\\
& V_i\subset V'_i, \dim V'_i/V_i=1, V_k=V'_k \textit{ if } k\neq i,N-i\bigg\}.
\end{align*}

Let $q_1: \mathcal{O}_{E_{i+1,i }^{\theta}(\bfv,1)} \rightarrow \mcal{F}_{\bfv + \mathbf{e}_{i+1}  + \mathbf{e}_{N-i} }$ be the first projection. Let $T^{*}_{q_1}$ be the relative cotangent sheaf along the fibers of $q_1$, and $\mathrm{Det}(T^{*}_{q_1})$ be the determinant line bundle on $\mathcal{O}_{E_{i+1,i }^{\theta}(\bfv,1)}$. The fiber of $q_1$ over $F$ is the Grassmannian $\Gr(1, V_{i+1}/V_i)$. Thus 
\[T^*_{q_1}=\sum_{\bar{v}_i+1<t\leq \bar{v}_{i+1}+1}x_{\bar{v}_i+1}/x_t\in K^{G\times \bbC^*}(\mathcal{O}_{E_{i+1,i }^{\theta}(\bfv,1)}).\]

Define a line bundle $\calL'$ on $\mathcal{O}_{E_{i+1,i }^{\theta}(\bfv,1)}$, whose fiber at $(F,F')$ is $V'_i/V_i$. Under the isomorphism $K^{G}(\mathcal{O}_{E_{i+1,i}^{\theta}(\bfv,1)})\cong \mathbf{R}^{[E_{i+1,i}^{\theta}(\bfv,1)]^{\fc}}$ in Proposition \ref{prop:pushforward}(a), the line bundle $\calL'$ corresponds to $x_{1+\bar{v}_{i}}\in \mathbf{R}^{[E_{i+1,i}^{\theta}(\bfv,1)]^{\fc}}$.

Let $\pi':  \mcal{Z}_{E_{i+1,i}^{\theta}(\bfv,1)} \rightarrow  \mathcal{O}_{E_{i+1,i }^{\theta}(\bfv,1)}$ be the projection. Let
\[\mathscr{F}_{i,\bfv,k} =  \pi^{'*}(\mathrm{Det}(T^{*}_{p'_1})\otimes \calL'^{\otimes k})\in K^{G\times \bbC^*}(\calZ_{E_{i+1,i}^{\theta}(\bfv,1)}),\]
and 
\[\mathscr{F}_{i,k}= \sum\limits_{\bfv} (-q)^{-v_{i+1}}\mathscr{F}_{i,\bfv,k}\in K^{G\times \bbC^*}(\calZ).\]

Recall the operators 
\[
\hat{F}_{i,k}\in \bigoplus_{\mbf v\in \Lambda_{\fc,d}} \Hom_{R(G)[q,q^{-1}]}(\bfR^{[\mbf v'']^{\fc}}[q,q^{-1}],\mathbf{R}^{[\bfv]^{\fc}}[q,q^{-1}])
\] 
from Equation \eqref{equ:F}, where $\mathbf{v''}=\bfv+\bfe_i-\bfe_{i+1}-\bfe_{2n-i}+\bfe_{2n+1-i}$.

\begin{prop}\label{prop:actions2}
	For any $1 \leq i \leq n-1$ and $k\in\bbZ$, the convolution action of $\mathscr{F}_{i,k}$ on $K^{G\times\mathbb{C}^{*} }(\calM)\simeq \oplus_{\mbf v\in \Lambda_{\fc,d}} \bfR^{[\mbf v]^{\fc}}[q,q^{-1}]$ is given by the operator $\hat{F}_{i,k}$.
\end{prop}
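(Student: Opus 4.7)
The argument runs entirely parallel to the proof of Proposition~\ref{prop:actions1}, with the block $[\bfv]_i$ and orbit $\mathcal{O}_{E^\theta_{i,i+1}(\cdot,1)}$ replaced by $[\bfv]_{i+1}$ and $\mathcal{O}_{E^\theta_{i+1,i}(\cdot,1)}$. The plan is to compute, for $\mathscr F \in K^{G\times\bbC^*}(\calF_{\bfv''})$,
\[
(-q)^{-v_{i+1}+1}\,\mathscr{F}_{i,\bfv-\bfe_{i+1}-\bfe_{N-i},k}\star \mathscr F
\]
by Lemma~\ref{lem:convolution}, then rewrite the resulting pushforward along $q_1$ as a $W_{[\bfv]^\fc}/W_{[A]^\fc}$-symmetrization via Proposition~\ref{prop:pushforward}(b), with $A=E^\theta_{i+1,i}(\bfv-\bfe_{i+1}-\bfe_{N-i},1)$.

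I then evaluate the integrand at the base point $\pt_A$: the fiber of $q_1$ is $\Gr(1,V_{i+1}/V_i)$, $\calL'$ restricts to $x_{\bar v_i+1}$, and $T^*_{q_1}=\sum_{\bar v_i+1<t\leq \bar v_{i+1}}x_{\bar v_i+1}/x_t$. A short direct manipulation, using the identity $(y-q^2)/(1-y)=-q\,\theta_1(q^{-1}y)^{-1}$ applied at $y=x_{\bar v_i+1}/x_t$, shows that
\[
\frac{\bigwedge_{q^2}T_{q_1}\cdot \mathrm{Det}\,T^*_{q_1}\cdot \calL'^{\otimes k}}{\bigwedge T^*_{q_1}}\bigg|_{\pt_A}
=x_{\bar v_i+1}^k\,(-q)^{v_{i+1}-1}\,\Phi_{[\bfv]_{i+1}\setminus\{\bar v_i+1\}}(q^{-1}x_{\bar v_i+1})^{-1},
\]
and the normalization $(-q)^{-v_{i+1}+1}$ built into $\mathscr F_{i,k}$ cancels the factor $(-q)^{v_{i+1}-1}$ cleanly.

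Finally, the matrix $A$ partitions $[\bfv]_{i+1}$ as $[A]_{i+1,i}\sqcup[A]_{i+1,i+1}=\{\bar v_i+1\}\sqcup[\bar v_i+2,\bar v_{i+1}]$, with all other blocks of $[A]^\fc$ equal to the corresponding blocks of $[\bfv]^\fc$; hence $W_{[\bfv]^\fc}/W_{[A]^\fc}$ is represented by the transpositions $s_{j,\bar v_i+1}$ for $j\in[\bfv]_{i+1}$. Since $[A]^\fc$ refines $[\bfv'']^\fc$, the pullback $q_2^*\mathscr F$ equals $\mathscr F$ viewed inside $\bfR^{[A]^\fc}$, and the block-symmetry of $\mathscr F$ in the $[\bfv'']_i$ and $[\bfv'']_{i+1}$ variables yields $s_{j,\bar v_i+1}(q_2^*\mathscr F)=\mathscr F(x_{\tau^-_j[\bfv]^\fc})$. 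Summing over $j$ reproduces precisely $\hat F_{i,k}\mathscr F$ from \eqref{equ:F}. The only delicate step, and the main place where the $F$-case departs from the $E$-case, is the sign-and-power-of-$q$ bookkeeping in the middle display: unlike the $E$-case, where the analogous quotient collapses directly to $\Phi(qx_{\bar v_i})$, here it produces $\Phi^{-1}(q^{-1}x_{\bar v_i+1})$ multiplied by a spurious $(-q)^{v_{i+1}-1}$ that must be absorbed by the chosen normalization of $\mathscr F_{i,k}$.
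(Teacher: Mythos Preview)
Your proof is correct and follows essentially the same approach as the paper's own argument: apply Lemma~\ref{lem:convolution}, rewrite the pushforward via Proposition~\ref{prop:pushforward}(b) as a symmetrization over $S_{[\bar v_i+1,\bar v_{i+1}]}/S_{[\bar v_i+2,\bar v_{i+1}]}$, evaluate the integrand at the base point using the expressions for $T^*_{q_1}$ and $\calL'$, and identify the result with $\hat F_{i,k}$. Your explicit use of the identity $(y-q^2)/(1-y)=-q\,\theta_1(q^{-1}y)^{-1}$ and your remark on how the normalization $(-q)^{-v_{i+1}+1}$ absorbs the extra factor make the sign-and-power bookkeeping more transparent than the paper's compressed one-line version, but the substance is identical.
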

\begin{proof}
	Let $q_2: \mathcal{O}_{E_{i+1,i}^{\theta}(\bfv-\bfe_{i+1}-\bfe_{2n-i},1)} \rightarrow \mcal{F}_{\bfv''}$ be the second projection. Same argument as in the proof of Proposition \ref{prop:actions1} shows that, for any $f \in  K^{\mathbb{C}^{*}\times G}(\calF_{\bfv''})\simeq \bfR^{[\mbf v'']^{\fc}}[q,q^{-1}]$, 
	\begin{align*}
	&	(-q)^{1-v_{i+1}} \mathscr{F}_{i,\bfv-\bfe_{i+1}-\bfe_{2n-i},k}\star f\\
		&= (-q)^{1-v_{i+1}}W_{[\bfv]^{\fc}}/W_{[E_{i+1,i}^{\theta}(\bfv-\bfe_{i+1}-\bfe_{2n-i},1)]^{\fc}}\bigg(\frac{\bigwedge\nolimits_{q^2}T_{q_1}\otimes q_2^*f\otimes \pi^{*}(\mathrm{Det}(T^{*}_{q_1})\otimes \calL'^{\otimes k}}{\bigwedge (T^*_{q_1})}\bigg)\\
	&= S_{[\bar{v}_i+1,\bar{v}_{i+1}]}/S_{[\bar{v}_i+2,\bar{v}_{i+1}]}\bigg(x_{\bar{v}_{i}+1}^k\prod_{\bar{v}_i+1<t\leq \bar{v}_{i+1}} \frac{q-q^{-1}x_{\bar{v}_{i}+1}/x_{t}}{1-x_{\bar{v}_{i}+1}/x_t} \cdot f\bigg)\\
	&= \sum_{j\in [\bfv]_{i+1}}s_{\bar{v}_i+1,j}\Big(x_{\bar{v}_{i}+1}^k\Phi_{[\bfv]_{i+1}\setminus\{1+\bar{v}_i\}}(q^{-1}x_{1+\bar{v}_i})^{-1}\cdot f\Big)\\
	&= \sum_{j\in [\bfv]_{i+1}}x_j^k \Phi_{[\bfv]_{i+1}\setminus\{j\}}(q^{-1}x_j)^{-1}\cdot s_{\bar{v}_i+1,j}f.
	\end{align*}
	The second equality follows from $T^*_{q_1}=\sum_{\bar{v}_i+1<t\leq \bar{v}_{i+1}}x_{\bar{v}_i+1}/x_t$. Since
	$(s_{\bar{v}_i+1,j}f)(x_{[\bfv]^\fc})=f(x_{\tau^-_j[\bfv]^\fc})$, we convert the last formula to $\hat{F}_{i,k}$ in \eqref{equ:F} as desired. 
\end{proof}

\subsection{The homomorphism $\Psi$}

We are now ready to connect the iquantum group $\tUi$ to the convolution algebra $\underline{K}^{G\times\mathbb{C}^{*}}(\calZ)$.

\begin{theorem}
 \label{thm:krealization}
	Sending $
 C\mapsto q^N, \quad\K_i \mapsto \hat{\K}_i,\quad \Theta_{i,k}\mapsto \hat{\bTh}_{i,k}$,
 and
	\[
 B_{i,k}\mapsto \begin{dcases}q^{ki}\mathscr{E}_{i,k}&\textit{ if } 1\leq i\leq n-1,\\
		q^{kn}\mathscr{B}_{n,k} &\textit{ if } i=n,\\
		q^{ki}\mathscr{F}_{\tau(i),-k} &\textit{ if } 1+n\leq i\leq 2n-1,
	\end{dcases}
 \]
	defines a $\bbC(q)$-algebra homomorphism
    \begin{align} 
        \label{eq:PsiUi}
        \Psi:\tUi\longrightarrow \underline{K}^{G\times\mathbb{C}^{*}}(\calZ).
    \end{align}
\end{theorem}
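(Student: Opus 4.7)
The strategy is to reduce the verification of all defining relations of $\tUi$ (namely \eqref{iDR KK}--\eqref{iDR Serre1}) to the corresponding assertions in the polynomial representation of Theorem~\ref{thm:polyrepeven}, using the faithfulness of the convolution action. Write $\rho$ for the convolution action
\[
\rho:\underline{K}^{G\times\mathbb{C}^{*}}(\calZ)\longrightarrow \End_{\underline{R(G\times\bbC^{*})}}\big(\underline{K}^{G\times\mathbb{C}^{*}}(\calM)\big),
\]
which is injective by Lemma~\ref{lemma:faithful} (and remains injective after localization, since localization at $S$ is flat). Under the identification $\underline{K}^{G\times\mathbb{C}^{*}}(\calM)\simeq \underline{\bfP}$ from \eqref{KMR}, I wish to show that $\rho\circ\Psi$ coincides with the polynomial representation of $\tUi$ on $\underline{\bfP}$ of Theorem~\ref{thm:polyrepeven}. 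Once this identification is established, the fact that the images of the generators of $\tUi$ under $\Psi$ satisfy the relations \eqref{iDR KK}--\eqref{iDR Serre1} in $\underline{K}^{G\times\mathbb{C}^{*}}(\calZ)$ follows from the injectivity of $\rho$ together with Theorem~\ref{thm:polyrepeven}.

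First I would treat the Cartan-type generators. The $\K_i$'s are sent to the scalar operators $\hat{\K}_i$ from \eqref{eq:3K} acting component-wise on $\underline{\bfP}$, and the central element $C$ is sent to $q^{N}$; both are trivially identified with the corresponding operators under $\rho$. Next, for the $\Theta_{i,k}$'s, note that the classes $\hat{\bTh}_{i,k}$ constructed in \S\ref{subsec:Theta} are supported on the diagonal $\calM\hookrightarrow\calZ$, which is the conormal to $\mathcal{O}_{\diag(\bfv)}$. By Proposition~\ref{prop:diagconv}, convolution with a class on such a diagonal orbit is just multiplication on $\underline{\bfP}$; comparing the formulas defining $\hat{\bTh}_{i,\bfv,k}$ with the functions $\Phi_{i,\bfv}$ from \eqref{equ:Phi}, one sees that $\rho(\hat{\bTh}_{i,k})$ is exactly the multiplication operator appearing as the coefficient of $z^{k}$ in $\hat{\bTh}_i(z)$. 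This matches the action of $\Theta_{i,k}$ in the polynomial representation.

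The remaining generators $B_{i,k}$ are handled by the propositions already established: Proposition~\ref{prop:actions1} identifies $\rho(\mathscr{E}_{i,k})$ with $\hat{E}_{i,k}$ for $1\le i\le n-1$; Proposition~\ref{prop:actions2} identifies $\rho(\mathscr{F}_{i,k})$ with $\hat{F}_{i,k}$; and Proposition~\ref{prop:actions3} identifies $\rho(\mathscr{B}_{n,k})$ with $\hat{B}_{n,k}$. Assembling these generating series gives
\[
\rho\big(\Psi(\bB_{j}(z))\big)=\begin{dcases}\hat{E}_{j}(z), &\textit{ if } 1\leq j\leq n-1,\\
\hat{B}_n(z), &\textit{ if } j=n,\\
\hat{F}_{\tau(j)}(z), &\textit{ if } n+1\leq j\leq 2n-1,\end{dcases}
\]
which is precisely the prescription of Theorem~\ref{thm:polyrepeven}. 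Therefore $\rho\circ\Psi$ agrees with the representation of $\tUi$ from Theorem~\ref{thm:polyrepeven} on all generators.

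To conclude, let $\calR$ be any one of the defining relations \eqref{iDR KK}--\eqref{iDR Serre1}. Apply the relation to the images $\Psi$(generators) in $\underline{K}^{G\times\mathbb{C}^{*}}(\calZ)$; the result is a specific element $R\in\underline{K}^{G\times\mathbb{C}^{*}}(\calZ)$. Since $\rho\circ\Psi$ realizes the polynomial representation, $\rho(R)$ is the corresponding relation evaluated on $\hat{\K}_i,\hat{\bTh}_{i,k},\hat{E}_{i,k},\hat{F}_{i,k},\hat{B}_{n,k}$, which vanishes by Theorem~\ref{thm:polyrepeven}. By the injectivity of $\rho$, $R=0$, so $\Psi$ descends to a well-defined $\C(q)$-algebra homomorphism $\tUi\to \underline{K}^{G\times\mathbb{C}^{*}}(\calZ)$. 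The only subtle point is ensuring that the polynomial representation of Theorem~\ref{thm:polyrepeven} is valid on the localized module $\underline{\bfP}$ and that $\rho$ stays injective after localization; the latter is automatic because the localization $(-)\otimes_{R(G\times\bbC^{*})}\underline{R(G\times\bbC^{*})}$ preserves faithfulness of the unlocalized convolution representation on each graded piece (\S\ref{subsec:convolution}), and the bookkeeping with the filtration by $\calZ_{\preceq A}$ is compatible with localization.
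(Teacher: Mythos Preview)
Your proposal is correct and follows essentially the same approach as the paper: both argue via the commutative diagram where the convolution action $\rho:\underline{K}^{G\times\mathbb{C}^{*}}(\calZ)\hookrightarrow\End_{\C(q)}(\underline{\bfP})$ is faithful (Lemma~\ref{lemma:faithful}), identify $\rho\circ\Psi$ with the polynomial representation of Theorem~\ref{thm:polyrepeven} using Propositions~\ref{prop:actions1}--\ref{prop:actions3} and the diagonal description of $\hat{\bTh}_{i,k}$, and then deduce that the relations hold in $\underline{K}^{G\times\mathbb{C}^{*}}(\calZ)$ by injectivity of $\rho$. Your write-up is somewhat more detailed (e.g.\ explicitly addressing the Cartan generators and the preservation of faithfulness under localization), but the logical structure is identical to the paper's proof.
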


\begin{proof}
Consider the following diagram
\begin{align*}
\xymatrix{
\tUi \ar[r]    \ar@{..>}[d]
&\End_{\C(q)} (\underline{\bfP}) 
\ar@{=}[d]
  \\
\underline{K}^{G\times\mathbb{C}^{*}}(\calZ) 
\ar@{^{(}->}[r]
& \End_{\C(q)} (\underline{\bfP})
}
\end{align*}
where the map on the top row is given by Theorem \ref{thm:polyrepeven}, the map on the bottom row is given by the convolution action (see Lemma \ref{lemma:faithful} and \eqref{equ:KM}).
Thanks to the explicit formulae for the actions of the $\mathscr{E}, \mathscr{F}, \mathscr{B}, \hat{\bTh}$ operators in \S\ref{subsec:Theta}--\ref{subsec:F}, we see that the correspondence given in the statement of the theorem indeed defines a homomorphism $\tUi\rightarrow \underline{K}^{G\times\mathbb{C}^{*}}(\calZ)$ which makes the above diagram commutative.
\end{proof}

\section{Finite-dimensional $\tUi$-modules}
 \label{sec:modules}

In this section, specializing the homomorphism $\Psi:\tUi \rightarrow K^{G\times\mathbb{C}^{*}}(\calZ)$ in \eqref{eq:PsiUi} at a non-root of unity, we establish a surjective homomorphism $\Psi_a: \tUi_t\twoheadrightarrow  K^{G\times\mathbb{C}^{*}}(\calZ)_a \cong H^{BM}_*(\calZ^a,\bbC)$ in \eqref{eq:Psia} and \eqref{UtoH}. We further construct a family of finite-dimensional standard modules and irreducible modules of $\tUi_t$ and provide a composition multiplicity formula for these standard modules. 

\subsection{Surjectivity of the homomorphism $\Psi_a$}
 \label{subsec:surj}

In this subsection, we pick $a:=(s,t)\in T\times\bbC^*$ satisfying $1-e^\alpha(s)\neq 0$ and $1-t^2e^\alpha(s)\neq 0$ for all roots $\alpha \in R$.

Let $\tUi_t$ denote the specialization of $\tUi$ at $q=t$. Let $\bbC_a$ denote the one-dimensional module of $K^{G\times\bbC^*}(\pt)$ by evaluation at $a$, and 
\[
K^{G\times\mathbb{C}^{*}}(\calZ)_a:=K^{G\times\mathbb{C}^{*}}(\calZ)\otimes_{K^{G\times\bbC^*}(\pt)}\bbC_a.
\]
By the definition of the localized module and the choice of $a$, we have
\begin{align} \label{eq:Kloc}
\underline{K}^{G\times\mathbb{C}^{*}}(\calZ)\otimes_{\underline{K}^{G\times\bbC^*}(\pt)}\bbC_a=K^{G\times\mathbb{C}^{*}}(\calZ)_a.
\end{align}
The specialization of \eqref{eq:PsiUi} at $q=t$ gives us an algebra homomorphism
\begin{align}  \label{eq:Psia}
\Psi_a:\tUi_t \longrightarrow K^{G\times\mathbb{C}^{*}}(\calZ)_a.
\end{align}

\begin{theorem}  \label{thm:surj}
Suppose that $t$ is not a root of unity. Then 
the homomorphism $\Psi_a$ in \eqref{eq:Psia} is surjective.
\end{theorem}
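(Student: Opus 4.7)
The plan is to combine Theorem~\ref{thm:generators} with the explicit classes $\hat{\bTh}_{i,k}, \mathscr{E}_{i,k}, \mathscr{F}_{i,k}, \mathscr{B}_{n,k}$ constructed in Section~\ref{sec:K}. By Theorem~\ref{thm:generators} and the identification~\eqref{eq:Kloc}, the algebra $K^{G\times\bbC^*}(\calZ)_a$ is generated as a $\bbC$-algebra by the images of $K^{G\times\bbC^*}(\calZ_{\diag(\bfv)})_a$ for $\bfv\in\Lambda_{\fc,d}$ together with $K^{G\times\bbC^*}(\calZ_{E^\theta_{i,i+1}(\bfv',1)})_a$ for $1\le i\le N-1$ and $\bfv'\in\Lambda_{\fc,d-1}$. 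So the task reduces to showing that each such generating subspace lies in $\Psi_a(\tUi_t)$.

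\emph{Diagonal contribution.} By Proposition~\ref{prop:diagconv}, classes in $K^{G\times\bbC^*}(\calZ_{\diag(\bfv)})_a\cong \bfR^{[\bfv]^\fc}_a$ act on $\underline{\bfP}$ by multiplication on the $\bfv$-summand. The image of $\Psi_a$ contains $\hat{\K}_i|_\bfv$ and the coefficients $\hat{\bTh}_{i,\bfv,k}$ of the generating series defined in \S\ref{subsec:Theta}. I would then argue that, because $t$ is not a root of unity, the subalgebra of $\bfR^{[\bfv]^\fc}_a$ generated by $\{\hat{\K}_i|_\bfv,\ \hat{\bTh}_{i,\bfv,k}\}_{i,k}$ coincides with the whole ring. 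Concretely, $\hat{\bTh}_i(z)|_\bfv$ for $1\le i\le n-1$ is a rational function of $z$ whose zeros and poles are the explicit points $q^c x_t^{\pm 1}$ for $t\in[\bfv]_i\cup[\bfv]_{i+1}$; extracting coefficients of its expansion at $z=0$ and residues at these poles recovers the elementary symmetric polynomials in $\{x_t:t\in[\bfv]_j\}$ for $j=i,i+1$, along with their Laurent duals. The $i=n$ series supplies the sign-flip-invariant combinations needed for $W_\fc$-invariance, and the combined data generate all of $\bfR^{[\bfv]^\fc}_a$.

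\emph{Off-diagonal contribution.} When $1\le i\le n-1$, the orbit $\calO_{E^\theta_{i,i+1}(\bfv,1)}$ is closed and minimal, so by Proposition~\ref{prop:pushforward}(a) we have $K^{G\times\bbC^*}(\calZ_{E^\theta_{i,i+1}(\bfv,1)})_a\cong \bfR^{[E^\theta_{i,i+1}(\bfv,1)]^\fc}_a$, which is generated over the diagonal parabolic invariants by the distinguished coordinate $x_{1+\bar v_i}^{\pm 1}$. Left and right convolution of $\mathscr{E}_{i,k}$ with diagonal classes already in the image (from the previous step), varied over $k\in\bbZ$, then exhaust this ring: left convolution by $K^{G\times\bbC^*}(\calZ_{\diag(\bfv+\bfe_i+\bfe_{N+1-i})})_a$ realizes expressions merging $x_{1+\bar v_i}$ with $[A]_{ii}$, right convolution by $K^{G\times\bbC^*}(\calZ_{\diag(\bfv+\bfe_{i+1}+\bfe_{N-i})})_a$ merges it with $[A]_{i+1,i+1}$, and the two together generate the entire Laurent polynomial ring. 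The identical argument handles $\mathscr{F}_{i,k}$ on $\calZ_{E^\theta_{i+1,i}(\bfv,1)}$. For $i=n$, the orbit $\calO_{E^\theta_{n,n+1}(\bfv,1)}$ is non-closed; here the non-root-of-unity hypothesis makes the localization in Proposition~\ref{prop:actions3} well-defined, and a short filtration argument with respect to the deeper diagonal stratum $\Delta\calF_{\bfv+\bfe_n+\bfe_{n+1}}$ shows that $\mathscr{B}_{n,k}$ together with diagonals exhaust $K^{G\times\bbC^*}(\calZ_{E^\theta_{n,n+1}(\bfv,1)})_a$.

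The main obstacle is the verification in the diagonal step that $\{\hat{\bTh}_{i,\bfv,k}\}$ generate the full Weyl-invariant Laurent polynomial ring $\bfR^{[\bfv]^\fc}_a$, rather than a proper subalgebra. The non-root-of-unity hypothesis on $t$ enters decisively here: it guarantees that the poles $q^c x_t^{\pm 1}$ of $\hat{\bTh}_i(z)$ at distinct triples $(i,c,t)$ remain generically distinct, so that residue computations cleanly separate the variables $x_t$. A careful bookkeeping of these poles, followed by a standard Laurent-symmetric-polynomial generation argument block-by-block, should then complete the proof.
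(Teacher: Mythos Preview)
Your overall strategy matches the paper's: invoke Theorem~\ref{thm:generators} and then show that each generating piece lies in the image of $\Psi_a$, treating diagonal and off-diagonal orbits separately. The off-diagonal argument you sketch is essentially the paper's Lemma~\ref{lem:E_ijorbit}, and the isolation of a fixed $\bfv$-summand is handled in the paper by an explicit Lagrange-type projector $\bfI_\bfv$ built from the $\K_i$ (Lemma~\ref{lem:Iv}); you allude to this via $\hat{\K}_i|_\bfv$ but should make it explicit.

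The genuine gap is in the diagonal step, and your proposed fix via ``residues at the poles $q^c x_t^{\pm1}$'' is not the right mechanism. The $x_t$ are ring elements, not complex numbers, so ``separating poles'' has no clear meaning in $\bfR^{[\bfv]^\fc}$. What actually works (and what the paper does in Lemma~\ref{lem:diagonal}) is to pass to the logarithmic generators $\hat H_{i,k}$, which are \emph{linear} in the block power sums $\sum_{j\in[\bfv]_i}x_j^{\pm k}$. There are $2n-1$ elements $\hat H_{1,k},\ldots,\hat H_{2n-1,k}$ but $2n$ block power sums, so one relation is missing: the $(2n)$th input is the \emph{full} power sum $\sum_{j=1}^d(x_j^k+x_j^{-k})\in R(G)=\bfR^{W_\fc}$. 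The resulting $2n\times 2n$ transition matrix has determinant $\frac{1-(-t^{2k})^{2n}}{1+t^{2k}}$, which is nonzero exactly when $t$ is not a root of unity, and inverting it recovers all block power sums.

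This is also why the theorem concerns $\Psi_a$ and not merely $\Psi_t$ (cf.\ Remark~\ref{rem:at}): the extra generator $\sum_j(x_j^k+x_j^{-k})$ is \emph{not} in the image of $\Psi$, but it lies in $R(G)$ and therefore specializes to a scalar once you evaluate at $s\in T$. Only after that specialization do the $\hat\bTh_{i,\bfv,k}$ alone generate the diagonal piece. Your residue heuristic does not see this, and without it you would not be able to close the argument.
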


\begin{remark}
\label{rem:at}
    The specialization at $a =(s,t)$, instead of at $t$, is needed in Theorem~ \ref{thm:surj} since a localization at the target space $\underline{K}^{G\times\bbC^*}(\calZ)$ is used in \eqref{eq:Kloc}.
\end{remark}

The remainder of this subsection is devoted to the proof of Theorem \ref{thm:surj}. To that end, we consider the specialization of $K^{G\times\mathbb{C}^{*}}(\calZ)$ (and its localization) at $q=t$, denoted by $K^{G\times\mathbb{C}^{*}}(\calZ)_t$, and the specialized morphism $\Psi_t$. 

Let us first show that $\mathscr{E}_{i,\bfv,k}$, $\mathscr{B}_{n,k}$, $\mathscr{F}_{i,\bfv,k}$, and $\hat{\bTh}_{i,\bfv,k}$ belong to the image of $\Psi_t$.
For any $\bfv\in \Lambda_{\fc,d}$, let 
\[
\bfI_\bfv:=\prod_{i=1}^{n-1}\prod_{\substack{m=-d\\m\neq v_{i+1}-v_i}}^d\frac{\K_i-t^m}{t^{v_{i+1}-v_i}-t^m}\in \tUi_t.
\] 

\begin{lemma}\label{lem:Iv}
    For any $\bfu,\bfv,\bfw\in \Lambda_{\fc,d}$, $\Psi_t(\bfI_\bfv)\star K^{G\times \bbC^*}(\calZ_{\bfu,\bfw})\neq 0$ if and only if $\bfv=\bfu$.
\end{lemma}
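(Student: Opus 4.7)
The plan is to compute $\Psi_t(\bfI_\bfv)$ explicitly as a class supported on the diagonal orbits $\calO_{\diag(\bfv')}\subset\calF\times\calF$, and then use Proposition \ref{prop:diagconv} to read off its convolution action on each piece $K^{G\times\bbC^*}(\calZ_{\bfu,\bfw})$ as scalar multiplication by an explicit rational expression in $\bfu$, $\bfv$ and $t$.

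First I would identify the class $\Psi_t(\K_i)$ for $1\le i\le n-1$. By \eqref{eq:3K} the operator $\hat{\K}_i$ acts on $\underline{\bfR}^{[\bfv']^\fc}$ as the scalar $t^{v'_{i+1}-v'_i}$, while Proposition \ref{prop:diagconv} (applied with $f=1$) says that the structure sheaf class $[\calO_{\diag(\bfv')}]$ acts on $K^{G\times\bbC^*}(\calZ_{\bfv',\bfw})$ as the identity. Combined with the faithfulness of Lemma \ref{lemma:faithful}, this forces
\[
\Psi_t(\K_i)\;=\;\sum_{\bfv'\in\Lambda_{\fc,d}} t^{v'_{i+1}-v'_i}\,[\calO_{\diag(\bfv')}].
\]
It follows that $\Psi_t(\bfI_\bfv)$ is again supported on $\bigsqcup_{\bfv'}\calO_{\diag(\bfv')}$, with its $\bfv'$-summand equal to $c_{\bfv',\bfv}\cdot[\calO_{\diag(\bfv')}]$, where
\[
c_{\bfv',\bfv}\;:=\;\prod_{i=1}^{n-1}\prod_{\substack{m=-d\\ m\ne v_{i+1}-v_i}}^{d}\frac{t^{v'_{i+1}-v'_i}-t^{m}}{t^{v_{i+1}-v_i}-t^{m}}.
\]

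Next I would apply Proposition \ref{prop:diagconv} once more: for any $A\in\Xi_d(\bfu,\bfw)$ and any $g\in K^{G\times\bbC^*}(\calZ_A)$, the convolution $[\calO_{\diag(\bfv')}]\star g$ vanishes unless $\bfv'=\ro(A)=\bfu$, in which case it equals $g$. Hence only the $\bfv'=\bfu$ summand of $\Psi_t(\bfI_\bfv)$ contributes, and the action on $K^{G\times\bbC^*}(\calZ_{\bfu,\bfw})$ is scalar multiplication by $c_{\bfu,\bfv}$. Since $\Xi_d(\bfu,\bfw)$ is nonempty for any $\bfu,\bfw\in\Lambda_{\fc,d}$, the group $K^{G\times\bbC^*}(\calZ_{\bfu,\bfw})$ is nonzero, so the image $\Psi_t(\bfI_\bfv)\star K^{G\times\bbC^*}(\calZ_{\bfu,\bfw})$ is nonzero if and only if $c_{\bfu,\bfv}\ne 0$.

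Finally I would analyze when $c_{\bfu,\bfv}$ vanishes. Because $t$ is not a root of unity (the standing assumption of Theorem \ref{thm:surj}) and $u_{i+1}-u_i\in[-d,d]$, the factor $t^{u_{i+1}-u_i}-t^{m}$ vanishes iff the integer exponents agree; thus $c_{\bfu,\bfv}\ne 0$ iff $u_{i+1}-u_i=v_{i+1}-v_i$ for every $i=1,\ldots,n-1$. Telescoping these equalities together with the constraint $\sum_{i=1}^n u_i=d=\sum_{i=1}^n v_i$ from $\bfu,\bfv\in\Lambda_{\fc,d}$ gives $u_i=v_i$ for $i=1,\ldots,n$, and the symmetry $u_i=u_{2n+1-i}$ then extends this to $\bfu=\bfv$. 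There is no serious obstacle here; $\bfI_\bfv$ is engineered as a Cartan projector onto the weight $\bfv$, and the only thing to verify is that the range $m\in[-d,d]$ in its definition is wide enough to separate distinct elements of $\Lambda_{\fc,d}$, which the telescoping makes transparent.
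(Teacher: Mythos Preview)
Your proof is correct and follows essentially the same approach as the paper: compute the convolution action of $\Psi_t(\bfI_\bfv)$ on $K^{G\times\bbC^*}(\calZ_{\bfu,\bfw})$ as scalar multiplication by the product $\prod_{i,m}(t^{u_{i+1}-u_i}-t^m)/(t^{v_{i+1}-v_i}-t^m)$, then use that $t$ is not a root of unity together with the constraint $\sum_{i=1}^n u_i=\sum_{i=1}^n v_i=d$ to conclude. Your argument is in fact slightly more detailed than the paper's in that you explicitly identify $\Psi_t(\K_i)$ as a class supported on the diagonal orbits (via faithfulness and Proposition~\ref{prop:diagconv}) before invoking the scalar action, whereas the paper passes directly from the scalar action of $\hat{\K}_i$ on $\bfR^{[\bfu]^\fc}$ to the formula \eqref{eq:starF}.
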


\begin{proof}
    Recall that, for $1\leq i\leq n-1$, $\hat{\K}_i$ acts on $\bfR^{[\bfu]^\fc}[q,q^{-1}]$ as scalar multiplication by $q^{u_{i+1}-u_i}$. Thus, for any $\mathscr{F}\in K^{G\times \bbC^*}(\calZ_{\bfu,\bfw})$,
    \begin{align}  \label{eq:starF}
    \Psi_t(\bfI_\bfv)\star \mathscr{F}= \prod_{i=1}^{n-1}\prod_{\substack{m=-d\\m\neq v_{i+1}-v_i}}^d\frac{t^{u_{i+1}-u_i}-t^m}{t^{v_{i+1}-v_i}-t^m}\mathscr{F}.
    \end{align}
    Hence, if $\bfv=\bfu$, the RHS of \eqref{eq:starF} equals $\mathscr{F}$. If $\Psi_t(\bfI_\bfv)\star \mathscr{F}\neq 0$, then by \eqref{eq:starF} we must have $u_{i+1}-u_i=v_{i+1}-v_i$, for $1\leq i\leq n-1$. Since $\sum_{i=1}^nu_i=\sum_{i=1}^nv_i=d$, we have $\bfu=\bfv$.
\end{proof}

The following lemma deals with the diagonal orbits.

\begin{lemma}
 \label{lem:diagonal}
    Let $\bfv\in \Lambda_{\fc,d}$. Suppose that $t$ is not a root of unity. Then the elements $\hat{\bTh}_{i,\bfv,k}$ $(1\leq i\leq 2n-1)$ and the elements $\sum_{j=1}^d(x_j^k+x_j^{-k})$, for $k\ge 1$, generate the algebra $K^{G\times\bbC^*}(\calZ_{\diag(\bfv)})_t\simeq \bfR^{[\bfv]^\fc}$.
\end{lemma}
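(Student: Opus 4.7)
The plan has three steps: identify the target algebra, translate the $\hat\bTh$-coefficients into explicit linear combinations of block power sums, and then solve a $2\times 2$ linear system.

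First, by Proposition~\ref{prop:diagconv} convolution on $K^{G\times\bbC^*}(\calZ_{\diag(\bfv)})$ coincides with ordinary multiplication on $\bfR^{[\bfv]^\fc}[q^{\pm 1}]$, so after specializing $q=t$ the task becomes to generate $\bfR^{[\bfv]^\fc}=\bbC[x_1^{\pm 1},\ldots,x_d^{\pm 1}]^{W_{[\bfv]^\fc}}$. For $1\le i\le n$ and $k\ge 1$, set $P^{\pm}_{i,k}:=\sum_{r\in[\bfv]_i}x_r^{\pm k}$. Since $\bbC[y_1^{\pm},\ldots,y_m^{\pm}]^{S_m}=\bbC[e_1,\ldots,e_m,e_m^{-1}]$, Newton's identities applied blockwise show that $\{P^{\pm}_{i,k}\}_{i,k}$ generate $\bfR^{[\bfv]^\fc}$, so it suffices to realize each $P^{\pm}_{i,k}$ as a polynomial in the proposed elements.

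Second, I extract these block power sums by taking formal logarithms of the generating series from \S\ref{subsec:Theta}. The identity
\[
\log \theta_1(z^{-1}/y)=\log q+\sum_{k\ge 1}\tfrac{q^k-q^{-k}}{k}\,y^k z^k,
\]
and its dual at $z=0$ for $\theta_1(wz/y)$, imply after cancelling the $q^{\pm(v_{i+1}-v_i)}$ prefactors that the coefficient of $z^k$ in $\log[(q-q^{-1})\hat\bTh_{\bullet,\bfv}(z)]$ equals $\tfrac{q^k-q^{-k}}{k}$ times
\begin{align*}
A_i^{(k)}&:=q^{(i-1)k}P^+_{i,k}-q^{(i+1)k}P^+_{i+1,k}&&(1\le i\le n-1),\\
B_i^{(k)}&:=q^{(N-1-i)k}P^-_{i+1,k}-q^{(N+1-i)k}P^-_{i,k}&&(\text{from }\hat\bTh_{\tau i,\bfv},\ 1\le i\le n-1),\\
C^{(k)}&:=q^{(n-1)k}P^+_{n,k}-q^{(n+1)k}P^-_{n,k}&&(\text{from }\hat\bTh_{n,\bfv}),
\end{align*}
respectively. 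Triangularity of $\exp$ on $1+z\,\bbC[q^{\pm 1}][[z]]$, combined with the fact that each $t^k-t^{-k}$ is a unit when $t$ is not a root of unity, implies the $\hat\bTh_{\bullet,\bfv,k}$ generate the same $\bbC$-subalgebra as $\{A_i^{(k)},B_i^{(k)},C^{(k)}\}_{i,k}$; the extra element is $D^{(k)}:=\sum_{j=1}^d(x_j^k+x_j^{-k})=\sum_{i=1}^n(P^+_{i,k}+P^-_{i,k})$, giving $2n$ linear relations for each $k\ge 1$ among the $2n$ unknowns $\{P^{\pm}_{i,k}\}_{1\le i\le n}$.

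Third, I invert this system. The $A_i^{(k)}$ yield $P^+_{i,k}=t^{2(n-i)k}P^+_{n,k}+(\text{known})$ for $i<n$; dually the $B_i^{(k)}$ yield $P^-_{i,k}=t^{-2(n-i)k}P^-_{n,k}+(\text{known})$. Substituting into $D^{(k)}$ and pairing with $C^{(k)}$ reduces to a $2\times 2$ system in $(P^+_{n,k},P^-_{n,k})$ with coefficient matrix
\[
\begin{pmatrix}[n]_{t^{2k}}&t^{-2(n-1)k}\,[n]_{t^{2k}}\\ t^{(n-1)k}&-t^{(n+1)k}\end{pmatrix},\qquad [n]_{t^{2k}}:=\frac{t^{2nk}-1}{t^{2k}-1},
\]
whose determinant collapses to $-[n]_{t^{2k}}\,t^{-(n-1)k}(t^{2nk}+1)$. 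This is nonzero whenever $t$ is not a root of unity (ruling out both $t^{2nk}=1$ and $t^{2nk}=-1$), so $P^{\pm}_{n,k}$, and hence every $P^{\pm}_{i,k}$, lies in the subalgebra generated by the proposed elements. The main technical obstacle is producing the clean factored form of the determinant and verifying that its non-vanishing is precisely controlled by $t$ avoiding roots of unity; the remainder (symmetric-function generation and exp/log triangularity) is standard.
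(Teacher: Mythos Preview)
Your proposal is correct and follows essentially the same approach as the paper: both pass from the $\hat\bTh$-coefficients to the log-coefficients $\hat H_{i,k}$ (your $A_i^{(k)},B_i^{(k)},C^{(k)}$ up to the scalar $\tfrac{[k]_t}{k}$), express these together with $\sum_j(x_j^k+x_j^{-k})$ as a linear system in the block power sums $P^{\pm}_{i,k}$, and verify invertibility when $t$ is not a root of unity. The only cosmetic difference is that the paper writes out the full $2n\times 2n$ bidiagonal matrix (diagonal $1$'s, superdiagonal $c=-t^{2k}$, last row all $1$'s) and computes its determinant $\tfrac{1-c^{2n}}{1-c}=\tfrac{1-t^{4nk}}{1+t^{2k}}$ directly, whereas you back-substitute to a $2\times 2$ system with determinant $-[n]_{t^{2k}}t^{-(n-1)k}(t^{2nk}+1)$; the two nonvanishing conditions are equivalent (both amount to $t^{4nk}\neq 1$).
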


\begin{proof}
    Recall from \eqref{GF} that 
    \[
    \bTh_{i}(z)=1+ \sum_{k\geq 1} (q-q^{-1})\Theta_{i,k}z^{k} =\exp\Big((q-q^{-1})\sum_{k\geq 1}H_{i,k}z^k\Big).
    \]
    For $1\leq i\leq n-1$, the operator $\hat{\bTh}_{i}(z)$ acts on $\bfR^{[\bfv]^\fc}[q,q^{-1}]$ as the scalar multiplication by $q^{v_{i+1}-v_i}\Phi_{[\bfv]_i}(q^{1-i}z^{-1})\Phi_{[\bfv]_{i+1}}(q^{-1-i}z^{-1})^{-1}$, and $\hat{\bTh}_{\tau i}(z)$ acts on $\bfR^{[\bfv]^\fc}[q,q^{-1}]$ as the scalar multiplication by $q^{v_i-v_{i+1}}\Phi_{[\bfv]_i}(q^{1+N-i})\Phi_{[\bfv]_{i+1}}(q^{-1+N-i}z)^{-1}$. Also, $\hat{\bTh}_n(z)$ acts on $\bfR^{[\bfv]^\fc}[q,q^{-1}]$ as the scalar multiplication by $\Phi_{[\bfv]_i}(q^{1-n}z^{-1})\Phi_{[\bfv]_{n}}(q^{1+n}z)$. A direct computation shows that the operators $\hat{H}_{i,k}$ acts on $K^{G\times\bbC^*}(\calZ_{\diag(\bfv)})_t\simeq \bfR^{[\bfv]^\fc}$ by the following scalar multiplications
    \[
    \hat{H}_{i,k}=\begin{dcases}
        \frac{1}{k}[k]_tt^{(i-1)k}\Big(\sum_{j\in [\bfv]_i}x_j^k-t^{2k}\sum_{j\in [\bfv]_{i+1}}x_j^k\Big), & \quad\text{ for }  1\leq i\leq n-1;
        \\
        \frac{1}{k}[k]_tt^{(n-1)k}\Big(\sum_{j\in [\bfv]_n}x_j^k-t^{2k}\sum_{j\in [\bfv]_n}x_j^{-k}\Big), & \quad\text{ for }  i=n.\\
    \end{dcases}
    \]
    Moreover, we have 
    \[
    \hat{H}_{\tau i,k}=
        \frac{1}{k}[k]_tt^{(N-i-1)k}\Big(\sum_{j\in [\bfv]_{i+1}}x_j^{-k}-t^{2k}\sum_{j\in [\bfv]_i}x_j^{-k}\Big),
        \qquad \text{ for } 1\leq i\leq n-1.
    \]
    
    Therefore, the column vector 
    \[
    \bigg(\frac{k}{[k]_t}\hat{H}_{1,k},\ldots, \frac{k}{[k]_tt^{(n-1)k}}\hat{H}_{n,k}, \frac{k}{[k]_tt^{nk}}\hat{H}_{\tau (n-1),k},\ldots,\frac{k}{[k]_tt^{(N-2)k}}\hat{H}_{\tau 1,k},\sum_{j=1}^d(x_j^k+x_j^{-k})\bigg)^T
    \]
    is related to the vector 
    \[
    \bigg(\sum_{j\in [\bfv]_1}x_j^k,\ldots, \sum_{j\in [\bfv]_n}x_j^k, \sum_{j\in [\bfv]_n}x_j^{-k},\ldots, \sum_{j\in [\bfv]_1}x_j^{-k}\bigg)^T
    \]
    by the matrix
    \[A=\begin{pmatrix}
        1 & c & & & \\
        & 1 & c &&\\
        & & 1 & c &\\
        &&&\ddots &\ddots\\
        1&1&1&\cdots &1
    \end{pmatrix},\]
    where $c=-t^{2k}$. Then $\det(A)=\frac{1-c^{2n}}{1-c}$ is nonzero (since $t$ is not a root of 1) and $A$ is invertible. Since $\big\{\sum_{j\in [\bfv]_i}x_j^{\pm k} \mid 1\le i\le n,k\ge 1 \big\}$ generate the algebra $\bfR^{[\bfv]^\fc}$, we conclude that the algebra $\bfR^{[\bfv]^\fc}$ is generated by $\{\hat{H}_{i,k} \mid 1\le i \le 2n-1, k\ge 1\}$ and the elements $\sum_{j=1}^d(x_j^k+x_j^{-k})$, for $k\ge 1$. The lemma follows by converting $\hat{H}_{i,k}$'s to $\hat{\bTh}_{i,k}$'s. 
\end{proof}

We now deal with the non-diagonal orbits required in Theorem~\ref{thm:generators}.

\begin{lemma}
  \label{lem:E_ijorbit}
    Let $\bfv\in \Lambda_{\fc,d-1}$.
    \begin{enumerate}
        \item 
        For $1\leq i\leq n-1$, $K^{G\times\bbC^*}(\calZ_{E^\theta_{i,i+1}(\bfv,1)})$ (respectively, $K^{G\times\bbC^*}(\calZ_{E^\theta_{i+1,i}(\bfv,1)})$) is contained in the algebra generated by $\mathscr{E}_{i,\bfv,k}$ (respectively, $\mathscr{F}_{i,\bfv,k}$), for $k\in \bbZ$, and the classes of sheaves supported on the diagonal orbits.
        \item
        $K^{G\times\bbC^*}(\calZ_{E^\theta_{n,n+1}(\bfv,1)})$ is contained in the algebra generated by $\mathscr{B}_{n,\bfv,k}$, for $k\in \bbZ$, and the classes of sheaves supported on the diagonal orbits.
    \end{enumerate}
\end{lemma}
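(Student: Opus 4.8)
\textbf{Proof proposal for Lemma~\ref{lem:E_ijorbit}.}
The plan is to reduce each statement to an explicit identification of the relevant piece of the associated graded of $\underline{K}^{G\times\bbC^*}(\calZ)$, using the short exact sequences and filtration from \S\ref{steinberg variety of type c} together with the explicit convolution formulas of \S\ref{sec:K}. Recall from \S\ref{steinberg variety of type c} that for any $A\in\Xi_d$ the group $K^{G\times\bbC^*}(\calZ_{\preceq A})$ surjects onto $K^{G\times\bbC^*}(\calZ_A)\cong \bfR^{[A]^\fc}[q,q^{-1}]$ with kernel $K^{G\times\bbC^*}(\calZ_{\prec A})$, a group of sheaves supported on strictly smaller orbits. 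Since each orbit $\calO_{E^\theta_{i,i+1}(\bfv,1)}$ with $1\le i\le n-1$ (resp.\ $\calO_{E^\theta_{i+1,i}(\bfv,1)}$) is \emph{closed} by Proposition~\ref{prop:orbit}(1) (minimality under $\preceq$ was observed in \S\ref{subsec:Eoperators} and \S\ref{subsec:F}), the sequence degenerates: $K^{G\times\bbC^*}(\calZ_{E^\theta_{i,i+1}(\bfv,1)})$ already sits inside $K^{G\times\bbC^*}(\calZ)$ with no lower-order corrections, so it suffices to show the single graded piece $\bfR^{[E^\theta_{i,i+1}(\bfv,1)]^\fc}[q,q^{-1}]$ is hit by products of $\mathscr{E}_{i,\bfv,k}$ and diagonal classes (and similarly for $\mathscr{F}$ and $\mathscr{B}$). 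For the non-closed orbit $\calO_{E^\theta_{n,n+1}(\bfv,1)}$ in part~(2), one works in the localized group: its closure contains only a diagonal copy of $\calF_{\bfv+\bfe_n+\bfe_{n+1}}$, and after passing to $\underline{K}^{G\times\bbC^*}(\calZ)$ the same two-term filtration argument applies, the lower piece being a diagonal class.

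First I would pin down the rings. By Proposition~\ref{prop:pushforward}(a), for $A=E^\theta_{i,i+1}(\bfv,1)$ the ring $\bfR^{[A]^\fc}$ is a tensor product of symmetric Laurent polynomial rings in the blocks $[A]_{jk}^\fc$; the only difference from the diagonal ring $\bfR^{[\bfv']^\fc}$ (where $\bfv'=\bfv+\bfe_i+\bfe_{i+1}+\dots$, the row datum) is that one block of size $v_i$ (resp.\ $v_{i+1}$) has been split off a single singleton variable $x_{1+\bar v_i}$. Concretely, writing $d'=d-1$, one has
\[
\bfR^{[E^\theta_{i,i+1}(\bfv,1)]^\fc}\;\cong\;\bfR^{[\bfv']^{\fc}_{<i}}\otimes \bbC[x_{1+\bar v_i}^{\pm1}]\otimes \bbC[x^{\pm1}_{2+\bar v_i},\dots]^{S_{v_i}}\otimes \cdots,
\]
i.e.\ a free module of rank $1$ over the diagonal-orbit ring tensored with $\bbC[x_{1+\bar v_i}^{\pm1}]$. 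The classes $\mathscr{E}_{i,\bfv,k}$, by the computation in Proposition~\ref{prop:actions1} (and the explicit line bundle $\calL=x_{1+\bar v_i}$), realize exactly the multiplication by $x_{1+\bar v_i}^{k}$ up to an invertible factor $\mathrm{Det}\,T^*_{p_1}$, which is itself a unit in the localized ring and lies in the subalgebra generated by the diagonal classes and $\mathscr{E}_{i,\bfv,0}$. Hence the powers $\{\mathscr{E}_{i,\bfv,k}\}_{k\in\bbZ}$ together with diagonal classes generate the full ring $\bfR^{[E^\theta_{i,i+1}(\bfv,1)]^\fc}$: the diagonal part supplies $\bfR^{[\bfv']^\fc}$ (each such $\bfv'$ being a row or column datum of $E^\theta_{i,i+1}(\bfv,1)$, so its ring embeds via Proposition~\ref{prop:diagconv} as multiplication), and the $\mathscr{E}_{i,\bfv,k}$ supply the extra Laurent variable $x_{1+\bar v_i}^{\pm1}$. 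The argument for $\mathscr{F}_{i,\bfv,k}$ is verbatim the same with the roles of $[A]_i$ and $[A]_{i+1}$ switched, using Proposition~\ref{prop:actions2} and $\calL'=x_{1+\bar v_i}$. For $\mathscr{B}_{n,\bfv,k}$ I would use Proposition~\ref{prop:actions3}: here $\calL=x_d$ and the extra datum the operator introduces is the single variable $x_d^{\pm1}$ sitting inside the $W_\fc$-orbit (note $\iota_d$ exchanges $x_d\leftrightarrow x_d^{-1}$), which together with the diagonal ring $\bfR^{[\bfv+\bfe_n+\bfe_{n+1}]^\fc}$ generates $\bfR^{[E^\theta_{n,n+1}(\bfv,1)]^\fc}$ after localization; the factor $\theta_1(qx_d^2)$ appearing in \eqref{equ:En} is a unit in the localized ring, so it does not affect the generated subalgebra.

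The main obstacle I anticipate is bookkeeping the interaction between the $\bbZ_2$-part of $W_\fc$ and the splitting of blocks in the $i=n$ case: the singleton variable $x_d$ is acted on nontrivially by $\iota_d$, so one must check that the subalgebra generated by the $\mathscr{B}_{n,\bfv,k}$ and the diagonal classes is stable and large enough, i.e.\ that one genuinely obtains all of $\bbC[x_d^{\pm1}]$ and not merely the $\iota_d$-invariant subring $\bbC[x_d^{\pm1}]^{\bbZ_2}$. This is handled by observing that $\mathscr{B}_{n,\bfv,k}$ involves $x_d^k$ for \emph{all} $k\in\bbZ$ (not just symmetric combinations) and that the conormal-bundle direction breaks the symmetry; concretely, one argues that for generic $q$ the operators $\{x_d^k\theta_1(qx_d^2)\}_{k}$ span $\bbC(q)[x_d^{\pm1}]$ over the diagonal subalgebra because $\theta_1(qx_d^2)$ is a fixed invertible element. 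A secondary technical point is to confirm that the (a priori non-closed) structure of $\calO_{E^\theta_{n,n+1}(\bfv,1)}$ does not obstruct the filtration argument: this is exactly why the statement is phrased for the localized $K$-group, and it follows from the remarks after \eqref{equ:localization} and the discussion preceding Lemma~\ref{lem:Egenerators} that pushforward-by-localization makes the two-term sequence $0\to\underline{K}^{G\times\bbC^*}(\calZ_{\prec A})\to\underline{K}^{G\times\bbC^*}(\calZ_{\preceq A})\to\underline{K}^{G\times\bbC^*}(\calZ_A)\to0$ still exact. Once these points are in place, the lemma is immediate; I would write it up by treating (1) first in full (the closed case) and then indicating the one-line modification for (2).
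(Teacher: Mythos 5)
Your overall scheme matches the paper's: identify $K^{G\times\bbC^*}(\calZ_{E^\theta_{i,i+1}(\bfv,1)})$ with the Laurent polynomial ring $\bfR^{[E^\theta_{i,i+1}(\bfv,1)]^\fc}[q,q^{-1}]$ via Proposition~\ref{prop:pushforward}, note that $\mathscr{E}_{i,\bfv,k}$ is the monomial $\mathrm{Det}\,T^*_{p_1}\cdot x_{\bar v_i+1}^k$, and combine with diagonal classes acting by multiplication (Proposition~\ref{prop:diagconv}). But there are two gaps.

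First, the central step is left vague. You argue that $\mathrm{Det}\,T^*_{p_1}$ is ``a unit in the localized ring and lies in the subalgebra generated by the diagonal classes and $\mathscr{E}_{i,\bfv,0}$,'' and from this you conclude the $\mathscr{E}_{i,\bfv,k}$ ``supply the extra Laurent variable $x_{1+\bar v_i}^{\pm 1}$.'' But being a unit of the ambient Laurent ring does not mean its inverse lies in the subalgebra you are generating (a subalgebra is not closed under inverting arbitrary units), and $\mathrm{Det}\,T^*_{p_1}$ is \emph{not} symmetric in $[1+\bar v_{i-1},\,1+\bar v_i]$, so it is not a diagonal class. The argument as stated is circular: to divide by the determinant factor inside the subalgebra, you already need the conclusion. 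The paper closes the gap by an explicit non-circular computation: since
\[
\mathscr{E}_{i,\bfv,k}=\prod_{1+\bar v_{i-1}\le j\le \bar v_i}\frac{x_j}{x_{\bar v_i+1}}\cdot x_{\bar v_i+1}^k
\quad\text{and}\quad
\prod_{1+\bar v_{i-1}\le j\le 1+\bar v_i}x_j^{-1}\in\bfR^{[\bfv+\bfe_i+\bfe_{N+1-i}]^\fc},
\]
one has $\bigl(\prod_{1+\bar v_{i-1}\le j\le 1+\bar v_i}x_j^{-1}\bigr)\star \mathscr{E}_{i,\bfv,k}=x_{\bar v_i+1}^{\,k-v_i-1}$, producing all Laurent monomials in the extra variable. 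This is the essential content of the proof and must be spelled out.

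Second, in part (2) you invoke the factor $\theta_1(qx_d^2)$ and claim it is ``a unit in the localized ring'' whose presence ``does not affect the generated subalgebra.'' This conflates two different objects: $\theta_1(qx_d^2)$ occurs in the \emph{convolution operator} $\hat B_{n,k}$ of \eqref{equ:En} acting on $\underline{\bfP}$, but the K-theory class $\mathscr{B}_{n,\bfv,k}=\pi^*(\mathrm{Det}\,T^*_{p_1}\otimes\calL^{\otimes k})$ is, under the identification $K^{G\times\bbC^*}(\calZ_A)\simeq\bfR^{[A]^\fc}[q,q^{-1}]$, a Laurent monomial just as in part (1); no $\theta_1$ factor appears in it. The proof of (2) is therefore literally the same as (1), once you replace the vague invertibility claim with the explicit cancellation against a diagonal class; there is no new analytic subtlety involving $\theta_1$ to address. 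The only genuine extra point for (2) is exactly the one you correctly flag, namely passing to the localized group because $\calO_{E^\theta_{n,n+1}(\bfv,1)}$ is not closed.
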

\begin{remark}
    In Lemma \ref{lem:E_ijorbit}(2), we consider everything inside $K^{G\times\bbC^*}(\calZ_{E^\theta_{n,n+1}(\bfv,1)})$. By Proposition \ref{prop:diagconv}, this space is stable under the convolution with the classes supported on the diagonal orbits. However, if we want to consider $K^{G\times\bbC^*}(\calZ_{E^\theta_{n,n+1}(\bfv,1)})$ as elements in $K^{G\times\bbC^*}(\calZ)$ via pushforward, we need to use localization.
\end{remark}
\begin{proof}[Proof of Lemma~\ref{lem:E_ijorbit}]
    (1) It suffices to show the statement for the $\mathscr{E}$-operators, as the other case can be proved similarly. First of all, the orbit $\calO_{E^\theta_{i,i+1}(\bfv,1)}$ is closed, and by Proposition \ref{prop:pushforward}, we have
    \[
    K^{G\times\bbC^*}(\calZ_{E^\theta_{i,i+1}(\bfv,1)})\simeq \bfR^{[E^\theta_{i,i+1}(\bfv,1)]^\fc}[q,q^{-1}],
    \]
    where 
    \[
    W_{[E^\theta_{i,i+1}(\bfv,1)]^\fc}=S_{[1,\bar{v}_1]}\times \cdots \times S_{[1+\bar{v}_{i-1},\bar{v}_i]}\times S_{[1+\bar{v}_{i},1+\bar{v}_i]}\times S_{[2+\bar{v}_i,1+\bar{v}_{i+1}]}\times\cdots \times S_{[2+\bar{v}_{n-1},d]}.
    \]
    By the assumption of $\bfv$, $\bfv+\bfe_i+\bfe_{N+1-i}\in \Lambda_{\fc,d}$. Moreover, we have 
    \[K^{G\times\bbC^*}(\calZ_{\diag(\bfv+\bfe_i+\bfe_{N+1-i})})\simeq \bfR^{S_{[1,\bar{v}_1]}\times \cdots \times S_{[1+\bar{v}_{i-1},1+\bar{v}_i]}\times S_{[2+\bar{v}_i,\bar{v}_{i+1}]}\times\cdots \times S_{[2+\bar{v}_{n-1},d]}}[q,q^{-1}],\]
    By the definition of $\mathscr{E}_{i,\bfv,k}$ in \eqref{equ:Eivk}, we have 
    \[
    \mathscr{E}_{i,\bfv,k}=\prod_{1+\bar{v}_{i-1}\leq j\leq \bar{v}_i}\frac{x_j}{x_{\bar{v}_i+1}}\cdot x_{\bar{v}_i+1}^k\in K^{G\times\bbC^*}(\calZ_{E^\theta_{i,i+1}(\bfv,1)}).
    \]
    Therefore, by Proposition \ref{prop:diagconv},
    \[
    \prod_{1+\bar{v}_{i-1}\leq j\leq 1+\bar{v}_i}x_j^{-1}\star\mathscr{E}_{i,\bfv,k}=x_{\bar{v}_i+1}^{k-v_i-1}\in K^{G\times\bbC^*}(\calZ_{E^\theta_{i,i+1}(\bfv,1)}).
    \]
    Since $K^{G\times\bbC^*}(\calZ_{E^\theta_{i,i+1}(\bfv,1)})$ is generated by $K^{G\times\bbC^*}(\calZ_{\diag(\bfv+\bfe_i+\bfe_{N+1-i})})$ and $x_{\bar{v}_i+1}^k$ $(k\in\bbZ)$, Part~ (1) follows.
    
    Part (2) can be proved in the same way.
\end{proof}

Now we can finish the proof of Theorem \ref{thm:surj}.

\begin{proof}[Proof of Theorem \ref{thm:surj}]
Upon specialization at $s\in G$, the elements $\sum_{j=1}^d(x_j^k+x_j^{-k})$ in Lemma~\ref{lem:diagonal} specialize to scalars. Now Theorem~\ref{thm:surj} follows from Theorem \ref{thm:generators}, Lemma~ \ref{lem:Iv}, Lemma \ref{lem:diagonal}, and Lemma~ \ref{lem:E_ijorbit}.
\end{proof}

\subsection{Reduction to the homology case}\label{sec:redu}
Recall $a:=(s,t)\in T\times\bbC^*$ as in \S\ref{subsec:surj} and $t$ is not a root of unity. Let $A$ be the subgroup of $G\times\bbC^*$ generated by $a$. Then $\calZ^A=\calZ^a$, where $\calZ^A$ (respectively, $\calZ^a$) denotes the fixed loci of $\calZ$ under the action of $A$ (respectively, $a$). We have the following chain of algebra isomorphisms
\begin{align*}
K^{G\times\bbC^*}(\calZ)_a &:= K^{G\times\bbC^*}(\calZ)\otimes_{K^{G\times\bbC^*}(\pt)}\bbC_a
\\
 & \simeq  K^{A}(\calZ)\otimes_{R(A)}\bbC_a
	\stackrel{r_a}{\xrightarrow{\sim}} K_\bbC(\calZ^a)
	\stackrel{RR}{\xrightarrow{\sim}} H^{BM}_*(\calZ^a,\bbC).
\end{align*}
Here $H^{BM}_*(\calZ^a,\bbC)$ denotes the Borel--Moore homology of $\calZ^a$, which also has a convolution algebra structure, see \cite[Chapter 2]{CG97}. The first isomorphism follows from \cite[Theorem 6.2.10]{CG97}. The map $r_a$ (respectively, $RR$) is the bivariant localization map from Theorem 5.11.10 (respectively, bivariant Riemann--Roch map from Theorem 5.11.11) \textit{loc. cit.}. All these maps respect the convolution algebra structures. Composing with the surjective algebra homomorphism $\Psi_a$ from Theorem \ref{thm:surj}, we get a surjective algebra homomorphism 
\begin{align}  \label{UtoH}
\tUi_t\twoheadrightarrow  H^{BM}_*(\calZ^a,\bbC).
\end{align}
Therefore, every representation of the convolution algebra $H^{BM}_*(\calZ^a,\bbC)$ pulls back to a representation of $\tUi_t$. Since the homomorphism in \eqref{UtoH} is surjective, the pullbacks of irreducible representations will remain irreducible.

\subsection{Representations of a convolution algebra}
We recall Ginzburg's construction of irreducible representations for convolution algebras, see \cite[\S8.6]{CG97}. Given two graded vector spaces $V$ and $W$, we write $V\stackrel{.}{=}W$ if there is a linear isomorphism that does not necessarily preserve the gradings. We also use the same notation to denote that two objects are quasi-isomorphic up to a shift in the derived category.

Let $\mu:M\rightarrow N$ be a proper map and $M$ is smooth (though possibly disconnected), and let $Z:=M\times_N M$. Then $H^{BM}_*(Z,\bbC)$ has a convolution algebra structure. 

Let $\calD^b(N)$ be the bounded derived category of complexes of sheaves with constructible cohomology sheaves. Then $\Ext^*_{\calD^b(N)}(\mu_*\underline{\bbC}_M,\mu_*\underline{\bbC}_M)$
has an algebra structure via the Yoneda product,
and we have the following algebra isomorphism 
\[H^{BM}_*(Z,\bbC)\stackrel{.}{=}\Ext^*_{\calD^b(N)}(\mu_*\underline{\bbC}_M,\mu_*\underline{\bbC}_M).\]

By the BBDG decomposition theorem, we have the following isomorphism
\[\mu_*\underline{\bbC}_M\simeq \bigoplus_{k\in \bbZ, \phi}L_\phi(k)\otimes \IC_\phi[k]\in \calD^b(N),\]
where $L_{\phi}(k)$ is a vector space, and $\IC_\phi$ is some simple perverse sheaf on $N$ such that some shift of it is a direct summand of $\mu_*\underline{\bbC}_M$. Let $L_\phi:=\oplus_k L_\phi(k)$. Applying this decomposition to the above isomorphism and using some property of the IC sheaves, we obtain that 
\[
H^{BM}_*(Z,\bbC)\stackrel{.}{=}\bigg(\bigoplus_\phi \End L_\phi\bigg)\bigoplus \bigg(\bigoplus_{\phi,\psi,k>0}\Hom_\bbC(L_\phi,L_\psi)\otimes \Ext^k_{\calD^b(N)}(\IC_\phi,\IC_\psi)\bigg).\]
The first sum is a direct sum of matrix algebras, hence semisimple. The second sum is concentrated in degrees $k>0$ and is the radical of the algebra $H^{BM}_*(Z,\bbC)$. Therefore, $\{L_\phi \mid L_\phi \neq 0\}$ forms a complete set of the isomorphism classes of simple $H^{BM}_*(Z,\bbC)$-modules.

There is also an equivariant version of this. Suppose a linear algebra group $H$ acts on $M$ and $N$ such that $\mu: M\rightarrow N$ is $H$-equivariant. Further, assume that there are only finitely many $H$-orbits on $N$. Then the data $\phi$ in the decomposition theorem is $\phi=(\calO_\phi,\chi_\phi)$, where $\calO_\phi\subset N$ is an $H$-orbit, while $\chi_\phi$ is an irreducible $H$-equivariant local system on $\calO_\phi$. Recall that $\chi_\phi$ corresponds to some irreducible representation of the component group $H(x)/H(x)^\circ$ of the stabilizer subgroup $H(x)$ of a point $x$ in the orbit $\calO_\phi$. Let $M_x$ denote the fiber $\mu^{-1}(x)$. Then the homology $H_*(M_x)$ has a commuting action of $H(x)/H(x)^\circ$ and $H^{BM}_*(Z,\bbC)$. We let $H_*(M_x)_\phi$ denote the $\chi_\phi$-isotypical component of $H_*(M_x)$. 

For any two parameters $\phi=(\calO_\phi,\chi_\phi)$ and $\psi=(\calO_\psi,\chi_\psi)$, choose a point $x\in \calO_\phi$ and let $i_x:\{x\}\hookrightarrow N$ denote the inclusion. 

\begin{prop} \cite[Theorem 8.6.23]{CG97}\label{prop:mult}
    The multiplicity of the simple $H^{BM}_*(Z,\bbC)$-module $L_\psi$ in the composition series of $H_*(M_x)_\phi$ is given by
    \[[H_*(M_x)_\phi: L_\psi]=\sum_k\dim H^k(i_x^!\IC_\psi)_\phi,\]
    where $H^k(i_x^!\IC_\psi)_\phi$ denotes the $\chi_\phi$-component of $H^k(i_x^!\IC_\psi)$.
\end{prop}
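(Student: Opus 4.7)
The plan is to invoke the general framework of Chriss--Ginzburg \cite[Theorem 8.6.23]{CG97}, whose hypotheses I would verify in our concrete setting and whose proof strategy I would adapt. First I would fix the geometric input: set $M := \calM^a$ and $N := \calN^a$, where $(-)^a$ denotes the $a$-fixed point locus, and take $Z := \calZ^a = M \times_N M$. The restriction $\mu := \pi|_M : M \to N$ remains proper and $M$ is smooth (since $a$-fixed loci inside smooth varieties are smooth). The group $H$ would be the reductive centralizer $Z_{G\times \bbC^*}(a)/\langle a\rangle$, acting on $N$ with finitely many orbits, which would then give meaning to the pairs $\phi = (\calO_\phi, \chi_\phi)$.

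The next step is the standard module realization. Using base change together with the smoothness of $M$, one has
\[
H_*(M_x) \stackrel{.}{=} i_x^!\, \mu_* \underline{\bbC}_M
\]
as graded vector spaces (up to a shift by the real dimension of $M$). Applying the BBDG decomposition recalled in \S\ref{sec:redu},
\[
\mu_* \underline{\bbC}_M \simeq \bigoplus_{\psi, k} L_\psi(k) \otimes \IC_\psi[k],
\]
and taking $i_x^!$ followed by passage to the $\chi_\phi$-isotypic component under the $H(x)/H(x)^\circ$-action yields
\[
H_*(M_x)_\phi \;\simeq\; \bigoplus_\psi L_\psi \otimes H^*\!\big(i_x^!\IC_\psi\big)_\phi
\]
as vector spaces. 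Since the component group action on the perverse sheaves is encoded through the $\chi_\psi$'s and commutes with the convolution action, this decomposition is compatible with the $H^{BM}_*(Z, \bbC)$-module structure in the appropriate sense.

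Finally, to extract the composition multiplicity, write $V_\psi := H^*(i_x^!\IC_\psi)_\phi$. Under the identification $H^{BM}_*(Z, \bbC) \stackrel{.}{=} \Ext^*(\mu_*\underline{\bbC}_M, \mu_*\underline{\bbC}_M)$, the semisimple quotient $\bigoplus_\psi \End(L_\psi)$ acts on $L_\psi \otimes V_\psi$ as $\End(L_\psi) \otimes \mathrm{id}$, while the radical (supplied by the positive-degree Ext groups) strictly increases a cohomological-degree filtration. Passing to the associated graded with respect to this filtration, each $L_\psi \otimes V_\psi$ contributes $\dim V_\psi$ copies of the simple module $L_\psi$. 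Summing over the graded pieces of $V_\psi$ yields
\[
[H_*(M_x)_\phi : L_\psi] \;=\; \dim V_\psi \;=\; \sum_k \dim H^k\!\big(i_x^!\IC_\psi\big)_\phi,
\]
which is the asserted formula.

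The main obstacle will be confirming that the cohomological-degree filtration on $H^{BM}_*(Z, \bbC)$ interacts with the decomposition theorem exactly as required, and that the $H(x)/H(x)^\circ$-isotypic decomposition is $H^{BM}_*(Z, \bbC)$-equivariant in our non-connected Steinberg setting. Both points are standard in the Chriss--Ginzburg framework; the only work specific to our case is the preliminary check that $\mu: \calM^a \to \calN^a$ and the induced $H$-action satisfy the hypotheses of \cite[Theorem 8.6.23]{CG97}, after which the proof is essentially a citation.
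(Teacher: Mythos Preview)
The paper does not give its own proof of this proposition; it is simply quoted from \cite[Theorem 8.6.23]{CG97} as a general result about convolution algebras $H^{BM}_*(Z,\bbC)$ arising from a proper map $\mu:M\to N$ with $H$-action having finitely many orbits on $N$. Your sketch correctly reconstructs the Chriss--Ginzburg argument (base change to identify $H_*(M_x)$ with $i_x^!\mu_*\underline{\bbC}_M$, apply the decomposition theorem, use that the radical acts by strictly raising cohomological degree), and you rightly note that the result is ultimately a citation.

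One small conflation: the proposition as stated in the paper lives in the \emph{general} framework of \S7.3, with abstract $M,N,Z,H$; the specialization $M=\calM^a$, $N=\calN^a$, $H=G(s)$ only happens afterwards in \S7.4 (Theorem~\ref{thm:simplestandard}). So your opening paragraph, where you set $M:=\calM^a$ etc.\ and propose to verify the hypotheses in that specific case, is doing the work of the \emph{application} rather than of the proposition itself. This is harmless mathematically, but if you are writing a proof of the proposition proper you should keep $M,N,Z,H$ generic and defer the instantiation to where it is actually used.
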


\subsection{Standard modules and irreducible modules}

We apply the above constructions to our case. Recall $a=(s,t)\in T\times \bbC^*$. Let $G(s)\subset G$ be the centralizer of $s$. By definition, 
\[\calN^a=\{x\in \calN\mid sxs^{-1}=t^{-2}x\}.\]
Let $\calM^a:=\bigsqcup_\bfv \calM_\bfv^a$ be the fixed loci. Then the map $\pi:\calM^a\rightarrow \calN^a$ is $G(s)$-equivariant. The equivariant version of the decomposition theorem gives
\[\pi_*\underline{\bbC}_{\calM^a}=\bigoplus_{k\in \bbZ, \phi=(\calO_\phi\subset \calN^a,\chi_\phi)}L_\phi(k)\otimes \IC_\phi[k].\]
Let $L_\phi=\oplus_k L_\phi(k)$. For any $x\in \calO_\phi$, the pullback via \eqref{UtoH} of the $H^{BM}_*(\calZ^a,\bbC)$-module $H_*(\calM_x)_\phi$ is called a {\em standard module} of $\tUi_t$. We also view $L_\phi$ as a $\tUi_t$-module this way. Then the above results give the following proposition.

\begin{theorem}
\label{thm:simplestandard}
    Assume that $t$ is not a root of unity.
    \begin{enumerate}
        \item The nonzero module $L_\phi$ is a simple $\tUi_t$-module.
        \item For any $\phi=(\calO_\phi,\chi_\phi)$ and $\psi=(\calO_\psi,\chi_\psi)$ and $x\in \calO_\phi$, 
        \[[H_*(\calM^a_x)_\phi: L_\psi]=\sum_k\dim H^k(i_x^!\IC_\psi)_\phi.\]
    \end{enumerate}
\end{theorem}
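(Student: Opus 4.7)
The plan is to reduce both parts to the Ginzburg/Chriss--Ginzburg convolution algebra formalism for $\mu = \pi|_{\calM^a}:\calM^a \to \calN^a$, and then transport the resulting statements to $\tUi_t$ through the surjective algebra homomorphism $\tUi_t \twoheadrightarrow H^{BM}_*(\calZ^a,\bbC)$ produced in \eqref{UtoH}.

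First I would check that the geometric setup of Section~\ref{sec:redu} meets the hypotheses recalled just before Proposition~\ref{prop:mult}. The fixed locus $\calM^a$ is smooth since $A$ is a reductive subgroup of $G\times\bbC^*$ acting on the smooth variety $\calM$, the restriction $\pi|_{\calM^a}$ is proper as the restriction of the proper map $\pi:\calM\to\calN$, and the convolution algebra $H^{BM}_*(\calZ^a,\bbC)$ with $\calZ^a = \calM^a\times_{\calN^a}\calM^a$ acts on $H_*(\calM^a_x)$ in the standard way. The centralizer $G(s)$ acts on $\calN^a = \{x\in\calN\mid sxs^{-1}=t^{-2}x\}$ with finitely many orbits, which is the analogue of Lusztig's finiteness for $s$-twisted nilpotents and which one extracts from the $\Xi_d$-parametrization of $G$-orbits on $\calF\times\calF$ in Section~\ref{sec:Stein} together with a standard reduction to a finite number of Jordan-type normal forms.

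For Part (1), the BBDG decomposition of $\pi_*\underline{\bbC}_{\calM^a}$ together with the semisimple-plus-radical splitting of $\mathrm{Ext}^*_{\calD^b(\calN^a)}(\pi_*\underline{\bbC}_{\calM^a},\pi_*\underline{\bbC}_{\calM^a})$ displayed in Section~\ref{sec:redu} identifies $\{L_\phi\mid L_\phi\neq 0\}$ as a complete and irredundant list of simple modules over the convolution algebra $H^{BM}_*(\calZ^a,\bbC)$. Now the surjectivity of the algebra homomorphism in \eqref{UtoH} implies that every $\tUi_t$-stable subspace of $L_\phi$ is automatically $H^{BM}_*(\calZ^a,\bbC)$-stable, so the pullback of $L_\phi$ remains simple as a $\tUi_t$-module. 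For Part (2), Proposition~\ref{prop:mult} applied to our situation delivers the multiplicity formula in the category of $H^{BM}_*(\calZ^a,\bbC)$-modules; surjectivity of \eqref{UtoH} again ensures that the composition multiplicities of $H_*(\calM^a_x)_\phi$ are unchanged when passing to $\tUi_t$-modules, so the formula carries over verbatim.

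The only genuine obstacle is the geometric input that $G(s)$ acts on $\calN^a$ with finitely many orbits, since without this the equivariant parametrization $\phi = (\calO_\phi,\chi_\phi)$ and hence the whole Ginzburg package would not directly apply. The algebraic half of the argument, namely the transfer of simplicity and of the multiplicity formula across \eqref{UtoH}, is immediate once surjectivity of $\Psi_a$ (established in Theorem~\ref{thm:surj}) is granted, so the bulk of the writeup is really a careful recording of how \cite[Chapter~8]{CG97} specializes to our non-connected type $C$ Steinberg variety $\calZ$.
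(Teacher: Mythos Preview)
Your proposal is correct and follows the same approach as the paper: both parts are obtained by applying the general Chriss--Ginzburg formalism (Proposition~\ref{prop:mult}) to the map $\pi:\calM^a\to\calN^a$ and then transporting the conclusions to $\tUi_t$ via the surjection of Theorem~\ref{thm:surj} (equivalently \eqref{UtoH}). The paper simply states this in two sentences, while you spell out the verification of hypotheses; note that the finiteness of $G(s)$-orbits on $\calN^a$ is a standard fact from the Kazhdan--Lusztig/Deligne--Langlands setup rather than something extracted from the $\Xi_d$-parametrization of orbits on $\calF\times\calF$.
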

\begin{proof}
    The simplicity of $L_\phi$ follows from Theorem \ref{thm:surj}. The rest follows from Proposition \ref{prop:mult}.
\end{proof}

\appendix

\section{Verification of Serre relations}
\label{App:A}

In this appendix, we verify the Serre relations \eqref{iDR Serre0}--\eqref{iDR Serre1} for the corresponding operators in  Theorem \ref{thm:polyrepeven}, completing the proof of this theorem in Section~\ref{sec:proof}.

\subsection{Serre relations \eqref{iDR Serre0}}

The Serre relations \eqref{iDR Serre0} states that
\begin{align} \label{eq:S}
\mathbb{S}_{i,j}(w_1,w_2|z)=0, \textit{\quad for } c_{ij}=-1, j\neq \tau i\neq i. 
\end{align}
The relation \eqref{eq:S} under the additional assumption that $(i,j)\neq (n-1,n)$ can be verified just as in \cite{V98}. 

The remaining case of \eqref{eq:S} when $(i,j) = (n-1,n)$ can be checked in the same way as the Serre relation \eqref{iDR Serre1} treated in the next subsection.

\subsection{Serre relations \eqref{iDR Serre1}}

We shall verify the following Serre relation:
\begin{align}\label{equ:checkserre}
	&(z-qw_1)(z-qw_2)\mathbb{S}_{n,j}(w_1,w_2|z)=\K_n\bDel(w_1w_2)\\
	&\times\frac{z(qw_1-q^{-1}w_2)(q^{-1}w_1-qw_2)}{w_1-w_2} \big(\bTh_n(w_2)\bB_{j}(z)-\bTh_n(w_1)\bB_{j}(z)\big),\textit{ if } c_{nj}=-1.\notag
\end{align}
The assumption $c_{nj}=-1$ means that $j=n\pm 1$. We check below the case for $j=n-1$, skipping the completely analogous case for $j=n+1$.

We first note that
\begin{align}  \label{eq:delta2}
\begin{split}
\delta(w_1x_j^{-1}q^n)\delta(w_2x_jq^n)\theta_1(qx_j^2) 
&= \delta(w_1x_j^{-1}q^n)\delta(w_2x_jq^n)\theta_1(qw_2/w_1)
\\ 
\delta(w_1x_jq^n)\delta(w_2x_j^{-1}q^n)\theta_1(qx_j^2)
&= \delta(w_1x_jq^n)\delta(w_2x_j^{-1}q^n)\theta_1(qw_1/w_2),
\end{split}
\end{align} 
and 
\begin{align} \label{eq:theta}
&\theta_1(qw_2/w_1)-[2]\theta_1(qw_2/w_1)\theta_1(z/w_2)+\theta_1(qw_2/w_1)\theta_1(z/w_1)\theta_1(z/w_2)\\
&= -\theta_1(qw_1/w_2)+[2]\theta_1(qw_1/w_2)\theta_1(z/w_1)-\theta_1(qw_1/w_2)\theta_1(z/w_1)\theta_1(z/w_2) \notag \\
&= \frac{z(q^2-1)(q^{-1}w_1-qw_2)(qw_1-q^{-1}w_2)}{(w_1-w_2)(z-qw_1)(z-qw_2)}. \notag
\end{align}

By definition, we have
\begin{align*}
&(\hat{B}_n(w_1)\hat{B}_n(w_2)\hat{E}_{n-1}(z)f)(x_{[\bfv]^\fc})\\
&= \sum_{j\in [\bfv]_n}\delta(w_1x_jq^n)\theta_1(qx_j^2)\Phi_{[\bfv]_n\setminus\{j\}}(qx_j)\cdot (\hat{B}_n(w_2)\hat{E}_{n-1}(z)f)(x_{\iota_j[\bfv]^\fc})\\
&= \sum_{j\in [\bfv]_n}\sum_{k\in [\bfv]_n\setminus\{j\}}\delta(w_1x_jq^n)\theta_1(qx_j^2)\Phi_{[\bfv]_n\setminus\{j\}}(qx_j)\delta(w_2x_kq^n)\theta_1(qx_k^2)\Phi_{[\bfv]_n\setminus\{j,k\}}(qx_k)\theta_1(qx_kx_j)\\
&\qquad \cdot (\hat{E}_{n-1}(z)f)(x_{\iota_k\iota_j[\bfv]^\fc})\\
&\qquad +\sum_{j\in [\bfv]_n}\delta(w_1x_jq^n)\theta_1(qx_j^2)\Phi_{[\bfv]_n\setminus\{j\}}(qx_j)\delta(w_2x_j^{-1}q^n)\theta_1(qx_j^{-2})\Phi_{[\bfv]_n\setminus\{j\}}(qx_j^{-1})\cdot (\hat{E}_{n-1}(z)f)(x_{[\bfv]^\fc})
  \\
&= \sum_{j\neq k\in [\bfv]_n}\sum_{l\in [\bfv]_{n-1}}\delta(w_1x_jq^n)\theta_1(qx_j^2)\Phi_{[\bfv]_n\setminus\{j\}}(qx_j)\delta(w_2x_kq^n)\theta_1(qx_k^2)\Phi_{[\bfv]_n\setminus\{j,k\}}(qx_k)\theta_1(qx_kx_j)\\
&\qquad \cdot \delta(zx_lq^{n-1})\Phi_{[\bfv]_{n-1}\setminus\{l\}}(qx_l)f(x_{\tau_l^+\iota_k\iota_j[\bfv]^\fc})\\
&\qquad +\sum_{j\in [\bfv]_n}\sum_{l\in [\bfv]_{n-1}}\delta(w_1x_jq^n)\theta_1(qx_j^2)\Phi_{[\bfv]_n\setminus\{j\}}(qx_j)\delta(w_2x_j^{-1}q^n)\theta_1(qx_j^{-2})\Phi_{[\bfv]_n\setminus\{j\}}(qx_j^{-1})\\
&\qquad \cdot \delta(zx_lq^{n-1})\Phi_{[\bfv]_{n-1}\setminus\{l\}}(qx_l)f(x_{\tau_l^+[\bfv]^\fc}).
\end{align*}
We also compute that
\begin{align*}
&(\hat{B}_n(w_1)\hat{E}_{n-1}(z)\hat{B}_n(w_2)f)(x_{[\bfv]^\fc})\\
&= \sum_{j\in [\bfv]_n}\delta(w_1x_jq^n)\theta_1(qx_j^2)\Phi_{[\bfv]_n\setminus\{j\}}(qx_j)\cdot (\hat{E}_{n-1}(z)\hat{B}_n(w_2)f)(x_{\iota_j[\bfv]^\fc})\\
&= \sum_{j\in [\bfv]_n}\sum_{l\in [\bfv]_{n-1}}\delta(w_1x_jq^n)\theta_1(qx_j^2)\Phi_{[\bfv]_n\setminus\{j\}}(qx_j)\delta(zx_lq^{n-1})\Phi_{[\bfv]_{n-1}\setminus\{l\}}(qx_l)(\hat{B}_n(w_2)f)(x_{\tau_l^+\iota_j[\bfv]^\fc})\\
&= \sum_{j\in [\bfv]_n}\sum_{l\in [\bfv]_{n-1}}\sum_{k\in [\bfv]_n\cup\{l\}\setminus\{j\}}\delta(w_1x_jq^n)\theta_1(qx_j^2)\Phi_{[\bfv]_n\setminus\{j\}}(qx_j)\delta(zx_lq^{n-1})\Phi_{[\bfv]_{n-1}\setminus\{l\}}(qx_l)\\
&\qquad \cdot \delta(w_2x_kq^n)\theta_1(qx_k^2)\Phi_{[\bfv]_n\cup\{l\}\setminus\{j,k\}}(qx_k)\theta_1(qx_kx_j)f(x_{\iota_k\tau_l^+\iota_j[\bfv]^\fc})\\
&\qquad +\sum_{j\in [\bfv]_n}\sum_{l\in [\bfv]_{n-1}}\delta(w_1x_jq^n)\theta_1(qx_j^2)\Phi_{[\bfv]_n\setminus\{j\}}(qx_j)\delta(zx_lq^{n-1})\Phi_{[\bfv]_{n-1}\setminus\{l\}}(qx_l)\\
&\qquad \cdot \delta(w_2x_j^{-1}q^n)\theta_1(qx_j^{-2})\Phi_{[\bfv]_n\cup\{l\}\setminus\{j\}}(qx_j^{-1}) f(x_{\tau_l^+[\bfv]^\fc}).
\end{align*}
Furthermore, we have that 
\begin{align*}
&(\hat{E}_{n-1}(z)\hat{B}_n(w_1)\hat{B}_n(w_2)f)(x_{[\bfv]^\fc})
\\
&= \sum_{l\in [\bfv]_{n-1}}\delta(zx_lq^{n-1})\Phi_{[\bfv]_{n-1}\setminus\{l\}}(qx_l)(\hat{B}_n(w_1)\hat{B}_n(w_2)f)(x_{\tau_l^+[\bfv]^\fc})
\\
&= \sum_{l\in [\bfv]_{n-1}}\sum_{j\in [\bfv]_n\cup\{l\}}\delta(zx_lq^{n-1})\Phi_{[\bfv]_{n-1}\setminus\{l\}}(qx_l) \\
&\qquad\qquad\qquad\qquad \cdot \delta(w_1x_jq^n)\theta_1(qx_j^2)\Phi_{[\bfv]_n\cup\{l\}\setminus\{j\}}(qx_j)(\hat{B}_n(w_2)f)(x_{\iota_j\tau_l^+[\bfv]^\fc})
\\
&= \sum_{l\in [\bfv]_{n-1}}\sum_{j\in [\bfv]_n\cup\{l\}}\sum_{k\in [\bfv]_n\cup\{l\}\setminus\{j\}}\delta(zx_lq^{n-1})\Phi_{[\bfv]_{n-1}\setminus\{l\}}(qx_l)\delta(w_1x_jq^n)\theta_1(qx_j^2)\Phi_{[\bfv]_n\cup\{l\}\setminus\{j\}}(qx_j)\\
&\qquad\qquad\qquad\qquad \cdot \delta(w_2x_kq^n)\theta_1(qx_k^2)\Phi_{[\bfv]_n\cup\{l\}\setminus\{j,k\}}(qx_k)\theta_1(qx_kx_j)f(x_{\iota_k\iota_j\tau_l^+[\bfv]^\fc})\\
&\quad +\sum_{l\in [\bfv]_{n-1}}\sum_{j\in [\bfv]_n\cup\{l\}}\delta(zx_lq^{n-1})\Phi_{[\bfv]_{n-1}\setminus\{l\}}(qx_l)\delta(w_1x_jq^n)\theta_1(qx_j^2)\Phi_{[\bfv]_n\cup\{l\}\setminus\{j\}}(qx_j)\\
&\qquad\qquad\qquad\qquad \cdot \delta(w_2x_j^{-1}q^n)\theta_1(qx_j^{-2})\Phi_{[\bfv]_n\cup\{l\}\setminus\{j\}}(qx_j^{-1}) f(x_{\tau_l^+[\bfv]^\fc}).
\end{align*}

Each of the formulas above for $\hat{B}_n(w_1)\hat{B}_n(w_2)\hat{E}_{n-1}(z), \hat{B}_n(w_1)\hat{E}_{n-1}(z)\hat{B}_n(w_2),
$ and $\hat{E}_{n-1}(z)\hat{B}_n(w_1)\hat{B}_n(w_2)$ consists of two big summands. The corresponding linear combination of the first summands in  
\begin{align*}
\bigg(\Big( \hat{B}_n(w_1)\hat{B}_n(w_2)\hat{E}_{n-1}(z) &-[2]\hat{B}_n(w_1)\hat{E}_{n-1}(z)\hat{B}_n(w_2)
\\
&  +\hat{E}_{n-1}(z)\hat{B}_n(w_1)\hat{B}_n(w_2) +(w_1\leftrightarrow w_2)\Big)f\bigg)(x_{[\bfv]^\fc})
\end{align*}
is given by
\begin{align*}
&\sum_{j\neq k\in [\bfv]_n}\sum_{l\in [\bfv]_{n-1}}\delta(w_1x_jq^n)\delta(w_2x_kq^n)\delta(zx_lq^{n-1})\theta_1(qx_j^2)\theta_1(qx_k^2)\theta_1(qx_kx_j)  \\
&\qquad\cdot f(x_{\tau_l^+\iota_k\iota_j[\bfv]^\fc}) \Phi_{[\bfv]_{n-1}\setminus\{l\}}(qx_l)\Phi_{[\bfv]_n\setminus\{j\}}(qx_j)\Phi_{[\bfv]_n\setminus\{j,k\}}(qx_k)\\
&+ \sum_{j\neq k\in [\bfv]_n}\sum_{l\in [\bfv]_{n-1}}\delta(w_1x_jq^n)\delta(w_2x_kq^n)\delta(zx_lq^{n-1})\theta_1(qx_j^2)\theta_1(qx_k^2)\theta_1(qx_kx_j) \\
&\qquad\cdot f(x_{\tau_l^+\iota_k\iota_j[\bfv]^\fc}) \Phi_{[\bfv]_{n-1}\setminus\{l\}}(qx_l)\Phi_{[\bfv]_n\setminus\{k\}}(qx_k)\Phi_{[\bfv]_n\setminus\{j,k\}}(qx_j)\\
& -[2]\sum_{j\in [\bfv]_n}\sum_{l\in [\bfv]_{n-1}}\sum_{k\in [\bfv]_n\cup\{l\}\setminus\{j\}}\delta(w_1x_jq^n)\delta(w_2x_kq^n)\delta(zx_lq^{n-1})\theta_1(qx_j^2)\theta_1(qx_k^2)\theta_1(qx_kx_j)  \\
&\qquad\cdot f(x_{\iota_k\tau_l^+\iota_j[\bfv]^\fc}) \Phi_{[\bfv]_{n-1}\setminus\{l\}}(qx_l) \Phi_{[\bfv]_n\setminus\{j\}}(qx_j)\Phi_{[\bfv]_n\cup\{l\}\setminus\{j,k\}}(qx_k)\\
& -[2]\sum_{k\in [\bfv]_n}\sum_{l\in [\bfv]_{n-1}}\sum_{j\in [\bfv]_n\cup\{l\}\setminus\{k\}}\delta(w_1x_jq^n)\delta(w_2x_kq^n)\delta(zx_lq^{n-1})\theta_1(qx_j^2)\theta_1(qx_k^2)\theta_1(qx_kx_j)  \\
&\qquad\cdot f(x_{\iota_j\tau_l^+\iota_k[\bfv]^\fc}) \Phi_{[\bfv]_{n-1}\setminus\{l\}}(qx_l) \Phi_{[\bfv]_n\setminus\{k\}}(qx_k)\Phi_{[\bfv]_n\cup\{l\}\setminus\{j,k\}}(qx_j)\\
&+ \sum_{l\in [\bfv]_{n-1}}\sum_{j\in [\bfv]_n\cup\{l\}}\sum_{k\in [\bfv]_n\cup\{l\}\setminus\{j\}}\delta(w_1x_jq^n)\delta(w_2x_kq^n)\delta(zx_lq^{n-1})\theta_1(qx_j^2)\theta_1(qx_k^2)\theta_1(qx_kx_j)  \\
&\qquad\cdot f(x_{\iota_k\iota_j\tau_l^+[\bfv]^\fc}) \Phi_{[\bfv]_{n-1}\setminus\{l\}}(qx_l)\Phi_{[\bfv]_n\cup\{l\}\setminus\{j\}}(qx_j)\Phi_{[\bfv]_n\cup\{l\}\setminus\{j,k\}}(qx_k)\\
&+ \sum_{l\in [\bfv]_{n-1}}\sum_{k\in [\bfv]_n\cup\{l\}}\sum_{j\in [\bfv]_n\cup\{l\}\setminus\{k\}}\delta(w_1x_jq^n)\delta(w_2x_kq^n)\delta(zx_lq^{n-1})\theta_1(qx_j^2)\theta_1(qx_k^2)\theta_1(qx_kx_j)  \\
&\qquad\cdot f(x_{\iota_j\iota_k\tau_l^+[\bfv]^\fc}) \Phi_{[\bfv]_{n-1}\setminus\{l\}}(qx_l)\Phi_{[\bfv]_n\cup\{l\}\setminus\{k\}}(qx_k)\Phi_{[\bfv]_n\cup\{l\}\setminus\{j,k\}}(qx_j).
\end{align*}
We claim that the above expression is equal to $0$.

Let us prove the claim. To than end, we compute that 
\begin{align*}
& \sum_{j\neq k\in [\bfv]_n}  \delta(w_1x_jq^n)\delta(w_2x_kq^n)\theta_1(qx_j^2)\theta_1(qx_k^2)\theta_1(qx_kx_j)  f(x_{\tau_l^+\iota_k\iota_j[\bfv]^\fc})  \\
&\qquad\cdot 
\Phi_{[\bfv]_n\setminus\{j\}}(qx_j)\Phi_{[\bfv]_n\setminus\{j,k\}}(qx_k)\\
&+\sum_{j\neq k\in [\bfv]_n}\delta(w_1x_jq^n)\delta(w_2x_kq^n)\theta_1(qx_j^2)\theta_1(qx_k^2)\theta_1(qx_kx_j)f(x_{\tau_l^+\iota_k\iota_j[\bfv]^\fc})\\
&\qquad\cdot\Phi_{[\bfv]_n\setminus\{k\}}(qx_k)\Phi_{[\bfv]_n\setminus\{j,k\}}(qx_j)\\
&-[2]\sum_{j\in [\bfv]_n}\sum_{k\in [\bfv]_n\cup\{l\}\setminus\{j\}}\delta(w_1x_jq^n)\delta(w_2x_kq^n)\theta_1(qx_j^2)\theta_1(qx_k^2)\theta_1(qx_kx_j)f(x_{\iota_k\tau_l^+\iota_j[\bfv]^\fc})\\
&\qquad\cdot \Phi_{[\bfv]_n\setminus\{j\}}(qx_j)\Phi_{[\bfv]_n\cup\{l\}\setminus\{j,k\}}(qx_k)\\
&-[2]\sum_{k\in [\bfv]_n}\sum_{j\in [\bfv]_n\cup\{l\}\setminus\{k\}}\delta(w_1x_jq^n)\delta(w_2x_kq^n)\theta_1(qx_j^2)\theta_1(qx_k^2)\theta_1(qx_kx_j)f(x_{\iota_j\tau_l^+\iota_k[\bfv]^\fc})\\
&\qquad\cdot \Phi_{[\bfv]_n\setminus\{k\}}(qx_k)\Phi_{[\bfv]_n\cup\{l\}\setminus\{j,k\}}(qx_j)\\
&+\sum_{j\in [\bfv]_n\cup\{l\}}\sum_{k\in [\bfv]_n\cup\{l\}\setminus\{j\}}\delta(w_1x_jq^n)\delta(w_2x_kq^n)\theta_1(qx_j^2)\theta_1(qx_k^2)\theta_1(qx_kx_j)f(x_{\iota_k\iota_j\tau_l^+[\bfv]^\fc})\\
&\qquad\cdot \Phi_{[\bfv]_n\cup\{l\}\setminus\{j\}}(qx_j)\Phi_{[\bfv]_n\cup\{l\}\setminus\{j,k\}}(qx_k)\\
&+\sum_{k\in [\bfv]_n\cup\{l\}}\sum_{j\in [\bfv]_n\cup\{l\}\setminus\{k\}}\delta(w_1x_jq^n)\delta(w_2x_kq^n)\theta_1(qx_j^2)\theta_1(qx_k^2)\theta_1(qx_kx_j)f(x_{\iota_j\iota_k\tau_l^+[\bfv]^\fc})\\
&\qquad\cdot \Phi_{[\bfv]_n\cup\{l\}\setminus\{k\}}(qx_k)\Phi_{[\bfv]_n\cup\{l\}\setminus\{j,k\}}(qx_j)
 \\
&= \sum_{j\neq k\in [\bfv]_n} \delta(w_1x_jq^n)\delta(w_2x_kq^n)\theta_1(qx_j^2)\theta_1(qx_k^2)\theta_1(qx_kx_j)f(x_{\tau_l^+\iota_k\iota_j[\bfv]^\fc})\\
&\quad \cdot \bigg(\Phi_{[\bfv]_n\setminus\{j\}}(qx_j)\Phi_{[\bfv]_n\setminus\{j,k\}}(qx_k)+\Phi_{[\bfv]_n\setminus\{k\}}(qx_k)\Phi_{[\bfv]_n\setminus\{j,k\}}(qx_j)\\
&\qquad -[2]\Phi_{[\bfv]_n\setminus\{j\}}(qx_j)\Phi_{[\bfv]_n\cup\{l\}\setminus\{j,k\}}(qx_k)-[2]\Phi_{[\bfv]_n\setminus\{k\}}(qx_k)\Phi_{[\bfv]_n\cup\{l\}\setminus\{j,k\}}(qx_j)\\
&\qquad +\Phi_{[\bfv]_n\cup\{l\}\setminus\{j\}}(qx_j)\Phi_{[\bfv]_n\cup\{l\}\setminus\{j,k\}}(qx_k)+\Phi_{[\bfv]_n\cup\{l\}\setminus\{k\}}(qx_k)\Phi_{[\bfv]_n\cup\{l\}\setminus\{j,k\}}(qx_j)\bigg)\\
&\quad -[2]\sum_{j\in [\bfv]_n}\delta(w_1x_jq^n)\delta(w_2x_lq^n)\theta_1(qx_j^2)\theta_1(qx_l^2)\theta_1(qx_lx_j)f(x_{\iota_l\tau_l^+\iota_j[\bfv]^\fc})\\
&\qquad\cdot \Phi_{[\bfv]_n\setminus\{j\}}(qx_j)\Phi_{[\bfv]_n\setminus\{j\}}(qx_l)\\
&\quad -[2]\sum_{k\in [\bfv]_n}\delta(w_1x_lq^n)\delta(w_2x_kq^n)\theta_1(qx_l^2)\theta_1(qx_k^2)\theta_1(qx_kx_l)f(x_{\iota_l\tau_l^+\iota_k[\bfv]^\fc})\\
&\qquad\cdot \Phi_{[\bfv]_n\setminus\{k\}}(qx_k)\Phi_{[\bfv]_n\setminus\{k\}}(qx_l)\\
&\quad +\sum_{j\in [\bfv]_n}\delta(w_1x_jq^n)\delta(w_2x_lq^n)\theta_1(qx_j^2)\theta_1(qx_l^2)\theta_1(qx_lx_j)f(x_{\iota_l\iota_j\tau_l^+[\bfv]^\fc})\\
&\qquad\cdot \Phi_{[\bfv]_n\cup\{l\}\setminus\{j\}}(qx_j)\Phi_{[\bfv]_n\setminus\{j\}}(qx_l)\\
&\quad +\sum_{k\in [\bfv]_n}\delta(w_1x_lq^n)\delta(w_2x_kq^n)\theta_1(qx_l^2)\theta_1(qx_k^2)\theta_1(qx_kx_l)f(x_{\iota_k\iota_l\tau_l^+[\bfv]^\fc})\\
&\qquad\cdot \Phi_{[\bfv]_n}(qx_l)\Phi_{[\bfv]_n\setminus\{k\}}(qx_k)\\
&\quad +\sum_{k\in [\bfv]_n}\delta(w_1x_lq^n)\delta(w_2x_kq^n)\theta_1(qx_l^2)\theta_1(qx_k^2)\theta_1(qx_kx_l)f(x_{\iota_l\iota_k\tau_l^+[\bfv]^\fc})\\
&\qquad\cdot \Phi_{[\bfv]_n\cup\{l\}\setminus\{k\}}(qx_k)\Phi_{[\bfv]_n\setminus\{k\}}(qx_l)\\
&\quad +\sum_{j\in [\bfv]_n}\delta(w_1x_jq^n)\delta(w_2x_lq^n)\theta_1(qx_j^2)\theta_1(qx_l^2)\theta_1(qx_lx_j)f(x_{\iota_j\iota_l\tau_l^+[\bfv]^\fc})
\Phi_{[\bfv]_n}(qx_l)\Phi_{[\bfv]_n\setminus\{j\}}(qx_j),
\end{align*}
which can then be rewritten using the identity $\theta_1(qx)+\theta_1(qx^{-1})=[2]$ as
\begin{align*}
&= \sum_{j\neq k\in [\bfv]_n}\delta(w_1x_jq^n)\delta(w_2x_kq^n)\theta_1(qx_j^2)\theta_1(qx_k^2)\theta_1(qx_kx_j)
\\
&\qquad\quad \cdot f(x_{\tau_l^+\iota_k\iota_j[\bfv]^\fc})\Phi_{[\bfv]_n\setminus\{j,k\}}(qx_j)\Phi_{[\bfv]_n\setminus\{j,k\}}(qx_k)\\
&\qquad\quad \cdot \bigg(\theta_1(qx_j/x_k)+\theta_1(qx_k/x_j)
-[2]\theta_1(qx_j/x_k)\theta_1(qx_k/x_l)-[2]\theta_1(qx_k/x_j)\theta_1(qx_j/x_l)\\
&\qquad\qquad + \theta_1(qx_j/x_l)\theta_1(qx_j/x_k)\theta_1(qx_k/x_l)+\theta_1(qx_k/x_l)\theta_1(qx_k/x_j)\theta_1(qx_j/x_l)\bigg)\\
&= \sum_{j\neq k\in [\bfv]_n}\delta(w_1x_jq^n)\delta(w_2x_kq^n)\theta_1(qx_j^2)\theta_1(qx_k^2)\theta_1(qx_kx_j) \\
&\qquad\quad \cdot f(x_{\tau_l^+\iota_k\iota_j[\bfv]^\fc})\Phi_{[\bfv]_n\setminus\{j,k\}}(qx_j)\Phi_{[\bfv]_n\setminus\{j,k\}}(qx_k)\\
&\qquad\quad \cdot \bigg([2]-\theta_1(qx_l/x_j)\theta_1(qx_j/x_k)\theta_1(qx_k/x_l)-\theta_1(qx_l/x_k)\theta_1(qx_k/x_j)\theta_1(qx_j/x_l)\bigg)\\
&= 0.
\end{align*}
This proves the claim. 

Summarizing, keeping in mind the desired Serre relation \eqref{equ:checkserre}, we continue to compute that
\begin{align*}
&\bigg(\Big(\hat{B}_n(w_1)\hat{B}_n(w_2)\hat{E}_{n-1}(z)-[2]\hat{B}_n(w_1)\hat{E}_{n-1}(z)\hat{B}_n(w_2)+\hat{E}_{n-1}(z)\hat{B}_n(w_1)\hat{B}_n(w_2)\\
&\qquad +(w_1\leftrightarrow w_2)\Big)f\bigg) (x_{[\bfv]^\fc})
  \\
&= 
\sum_{l\in [\bfv]_{n-1}}\delta(zx_lq^{n-1})\Phi_{[\bfv]_{n-1}\setminus\{l\}}(qx_l)f(x_{\tau_l^+[\bfv]^\fc})\\
& \cdot \bigg(\sum_{j\in [\bfv]_n}\delta(w_1x_jq^n)\delta(w_2x_j^{-1}q^n)\theta_1(qx_j^2)\theta_1(qx_j^{-2})\Phi_{[\bfv]_n\setminus\{j\}}(qx_j^{-1}) \Phi_{[\bfv]_n\setminus\{j\}}(qx_j)\\
&\quad +\sum_{j\in [\bfv]_n}\delta(w_1x_j^{-1}q^n)\delta(w_2x_jq^n)\theta_1(qx_j^2)\theta_1(qx_j^{-2})\Phi_{[\bfv]_n\setminus\{j\}}(qx_j)\Phi_{[\bfv]_n\setminus\{j\}}(qx_j^{-1})\\
&\quad -[2]\sum_{j\in [\bfv]_n}\delta(w_1x_jq^n)\delta(w_2x_j^{-1}q^n)\theta_1(qx_j^2)\theta_1(qx_j^{-2})\Phi_{[\bfv]_n\setminus\{j\}}(qx_j)\Phi_{[\bfv]_n\cup\{l\}\setminus\{j\}}(qx_j^{-1})\\
&\quad -[2]\sum_{j\in [\bfv]_n}\delta(w_2x_jq^n)\delta(w_1x_j^{-1}q^n)\theta_1(qx_j^2)\theta_1(qx_j^{-2})\Phi_{[\bfv]_n\setminus\{j\}}(qx_j)\Phi_{[\bfv]_n\cup\{l\}\setminus\{j\}}(qx_j^{-1})\\
&\quad +\sum_{j\in [\bfv]_n\cup\{l\}}\delta(w_1x_jq^n)\delta(w_2x_j^{-1}q^n)\theta_1(qx_j^2)\theta_1(qx_j^{-2})\Phi_{[\bfv]_n\cup\{l\}\setminus\{j\}}(qx_j)\Phi_{[\bfv]_n\cup\{l\}\setminus\{j\}}(qx_j^{-1})\\
&\quad +\sum_{j\in [\bfv]_n\cup\{l\}}\delta(w_1x_j^{-1}q^n)\delta(w_2x_jq^n)\theta_1(qx_j^2)\theta_1(qx_j^{-2})\Phi_{[\bfv]_n\cup\{l\}\setminus\{j\}}(qx_j)\Phi_{[\bfv]_n\cup\{l\}\setminus\{j\}}(qx_j^{-1})\bigg),
\end{align*}
which can be rewritten using the identities \eqref{eq:delta2}
as follows:
\begin{align*}
& =
\sum_{l\in [\bfv]_{n-1}}\delta(zx_lq^{n-1})\Phi_{[\bfv]_{n-1}\setminus\{l\}}(qx_l)f(x_{\tau_l^+[\bfv]^\fc})\\
&\cdot \bigg(\theta_1(qw_2/w_1)\sum_{j\in [\bfv]_n}\delta(w_1x_jq^n)\delta(w_2x_j^{-1}q^n)\Phi_{[\bfv]_n}(qx_j^{-1}) \Phi_{[\bfv]_n\setminus\{j\}}(qx_j)\\
&\quad +\theta_1(qw_1/w_2)\sum_{j\in [\bfv]_n}\delta(w_1x_j^{-1}q^n)\delta(w_2x_jq^n)\Phi_{[\bfv]_n\setminus\{j\}}(qx_j)\Phi_{[\bfv]_n}(qx_j^{-1})\\
&\quad -[2]\theta_1(qw_2/w_1)\sum_{j\in [\bfv]_n}\delta(w_1x_jq^n)\delta(w_2x_j^{-1}q^n)\Phi_{[\bfv]_n\setminus\{j\}}(qx_j)\Phi_{[\bfv]_n\cup\{l\}}(qx_j^{-1})\\
&\quad -[2]\theta_1(qw_1/w_2)\sum_{j\in [\bfv]_n}\delta(w_2x_jq^n)\delta(w_1x_j^{-1}q^n)\Phi_{[\bfv]_n\setminus\{j\}}(qx_j)\Phi_{[\bfv]_n\cup\{l\}}(qx_j^{-1})\\
&\quad +\theta_1(qw_2/w_1)\sum_{j\in [\bfv]_n\cup\{l\}}\delta(w_1x_jq^n)\delta(w_2x_j^{-1}q^n)\Phi_{[\bfv]_n\cup\{l\}\setminus\{j\}}(qx_j)\Phi_{[\bfv]_n\cup\{l\}}(qx_j^{-1})\\
&\quad +\theta_1(qw_1/w_2)\sum_{j\in [\bfv]_n\cup\{l\}}\delta(w_1x_j^{-1}q^n)\delta(w_2x_jq^n)\Phi_{[\bfv]_n\cup\{l\}\setminus\{j\}}(qx_j)\Phi_{[\bfv]_n\cup\{l\}}(qx_j^{-1})\bigg)
  \\
&= \sum_{l\in [\bfv]_{n-1}}\delta(zx_lq^{n-1})\Phi_{[\bfv]_{n-1}\setminus\{l\}}(qx_l)f(x_{\tau_l^+[\bfv]^\fc})\\
&\cdot\bigg(\theta_1(qw_2/w_1)\sum_{j\in [\bfv]_n}\delta(w_1x_jq^n)\delta(w_2x_j^{-1}q^n)\Phi_{[\bfv]_n}(qx_j^{-1}) \Phi_{[\bfv]_n\setminus\{j\}}(qx_j)\\
&\quad +\theta_1(qw_1/w_2)\sum_{j\in [\bfv]_n}\delta(w_1x_j^{-1}q^n)\delta(w_2x_jq^n)\Phi_{[\bfv]_n\setminus\{j\}}(qx_j)\Phi_{[\bfv]_n}(qx_j^{-1})\\
&\quad -[2]\theta_1(qw_2/w_1)\theta_1(z/w_2)\sum_{j\in [\bfv]_n}\delta(w_1x_jq^n)\delta(w_2x_j^{-1}q^n)\Phi_{[\bfv]_n\setminus\{j\}}(qx_j)\Phi_{[\bfv]_n}(qx_j^{-1})\\
&\quad -[2]\theta_1(qw_1/w_2)\theta_1(z/w_1)\sum_{j\in [\bfv]_n}\delta(w_2x_jq^n)\delta(w_1x_j^{-1}q^n)\Phi_{[\bfv]_n\setminus\{j\}}(qx_j)\Phi_{[\bfv]_n}(qx_j^{-1})\\
&\quad +\theta_1(qw_2/w_1)\theta_1(z/w_1)\theta_1(z/w_2)\sum_{j\in [\bfv]_n}\delta(w_1x_jq^n)\delta(w_2x_j^{-1}q^n)\Phi_{[\bfv]_n\setminus\{j\}}(qx_j)\Phi_{[\bfv]_n}(qx_j^{-1})\\
&\quad +\theta_1(qw_1/w_2)\theta_1(z/w_1)\theta_1(z/w_2)\sum_{j\in [\bfv]_n}\delta(w_1x_j^{-1}q^n)\delta(w_2x_jq^n)\Phi_{[\bfv]_n\setminus\{j\}}(qx_j)\Phi_{[\bfv]_n}(qx_j^{-1})\\
&\quad + \theta_1(qw_2/w_1)\delta(w_1x_lq^n)\delta(w_2x_l^{-1}q^n)\Phi_{[\bfv]_n}(qx_l)\Phi_{[\bfv]_n}(qx_l^{-1})\theta_1(qx_l^{-2})\\
&\quad + \theta_1(qw_1/w_2)\delta(w_1x_l^{-1}q^n)\delta(w_2x_lq^n)\Phi_{[\bfv]_n}(qx_l)\Phi_{[\bfv]_n}(qx_l^{-1})\theta_1(qx_l^{-2})\bigg),
\end{align*}
which can be further rewritten using \eqref{eq:theta} as 
\begin{align*}
&= (\hat{E}_{n-1}(z)f)(x_{[\bfv]^\fc})\bigg(\theta_1(qw_2/w_1)-[2]\theta_1(qw_2/w_1)\theta_1(z/w_2)+\theta_1(qw_2/w_1)\theta_1(z/w_1)\theta_1(z/w_2)\bigg)\\
&\bigg(\sum_{j\in [\bfv]_n}\delta(w_1x_jq^n)\delta(w_2x_j^{-1}q^n)\Phi_{[\bfv]_n}(qx_j^{-1}) \Phi_{[\bfv]_n\setminus\{j\}}(qx_j)\\
&-\sum_{j\in [\bfv]_n}\delta(w_1x_j^{-1}q^n)\delta(w_2x_jq^n)\Phi_{[\bfv]_n\setminus\{j\}}(qx_j)\Phi_{[\bfv]_n}(qx_j^{-1})\bigg)\\
&+\sum_{l\in [\bfv]_{n-1}}\delta(zx_lq^{n-1})\Phi_{[\bfv]_{n-1}\setminus\{l\}}(qx_l)f(x_{\tau_l^+[\bfv]^\fc})\\
&\bigg(\theta_1(qw_2/w_1)\delta(w_1x_lq^n)\delta(w_2x_l^{-1}q^n)\Phi_{[\bfv]_n}(qx_l)\Phi_{[\bfv]_n}(qx_l^{-1})\theta_1(qx_l^{-2})\\
&+\theta_1(qw_1/w_2)\delta(w_1x_l^{-1}q^n)\delta(w_2x_lq^n)\Phi_{[\bfv]_n}(qx_l)\Phi_{[\bfv]_n}(qx_l^{-1})\theta_1(qx_l^{-2})\bigg)
  \\
&= 
(\hat{E}_{n-1}(z)f)(x_{[\bfv]^\fc})\bigg(\theta_1(qw_2/w_1)-[2]\theta_1(qw_2/w_1)\theta_1(z/w_2)+\theta_1(qw_2/w_1)\theta_1(z/w_1)\theta_1(z/w_2)\bigg)\\
&\frac{\bDel(w_1w_2)}{q-q^{-1}}\bigg(\Phi_{[\bfv]_n}(q^{1-n}w_1^{-1})\Phi_{[\bfv]_n}(q^{1+n}w_1)-\Phi_{[\bfv]_n}(q^{1+n}w_2)\Phi_{[\bfv]_n}(q^{1-n}w_2^{-1})\bigg)\\
&+ \sum_{l\in [\bfv]_{n-1}}\delta(zx_lq^{n-1})\Phi_{[\bfv]_{n-1}\setminus\{l\}}(qx_l)f(x_{\tau_l^+[\bfv]^\fc})\\
&\bigg(\theta_1(qw_2/w_1)\delta(w_1x_lq^n)\delta(w_2x_l^{-1}q^n)\Phi_{[\bfv]_n}(qx_l)\Phi_{[\bfv]_n}(qx_l^{-1})\theta_1(qx_l^{-2}) \\
&+\theta_1(qw_1/w_2)\delta(w_1x_l^{-1}q^n)\delta(w_2x_lq^n)\Phi_{[\bfv]_n}(qx_l)\Phi_{[\bfv]_n}(qx_l^{-1})\theta_1(qx_l^{-2})\bigg),
\end{align*}
which can be shown using \eqref{eq:theta} again to be equal to
\begin{align*}
&= 
-q\bDel(w_1w_2)\frac{z(q^{-1}w_1-qw_2)(qw_1-q^{-1}w_2)}{(w_1-w_2)(z-qw_1)(z-qw_2)}(\hat{\bTh}_n(w_2)\hat{E}_{n-1}(z)f-\hat{\bTh}_n(w_1)\hat{E}_{n-1}(z)f)(x_{[\bfv]^\fc})\\
&+\sum_{l\in [\bfv]_{n-1}}\delta(zx_lq^{n-1})\Phi_{[\bfv]_{n-1}\setminus\{l\}}(qx_l)f(x_{\tau_l^+[\bfv]^\fc}) \times \\
&\qquad\quad \bigg(\theta_1(qw_2/w_1)\delta(w_1x_lq^n)\delta(w_2x_l^{-1}q^n)\Phi_{[\bfv]_n}(qx_l)\Phi_{[\bfv]_n}(qx_l^{-1})\theta_1(qx_l^{-2})\\
&\qquad\qquad +\theta_1(qw_1/w_2)\delta(w_1x_l^{-1}q^n)\delta(w_2x_lq^n)\Phi_{[\bfv]_n}(qx_l)\Phi_{[\bfv]_n}(qx_l^{-1})\theta_1(qx_l^{-2})\bigg).
\end{align*}

Multiplying both sides by $(z-qw_1)(z-qw_2)$ will kill the extra terms
\begin{align*}
	\sum_{l\in [\bfv]_{n-1}} &\delta(zx_lq^{n-1})\Phi_{[\bfv]_{n-1}\setminus\{l\}}(qx_l)f(x_{\tau_l^+[\bfv]^\fc}) \times \\
	&\bigg(\theta_1(qw_2/w_1)\delta(w_1x_lq^n)\delta(w_2x_l^{-1}q^n)\Phi_{[\bfv]_n}(qx_l)\Phi_{[\bfv]_n}(qx_l^{-1})\theta_1(qx_l^{-2})\\
	&\quad +\theta_1(qw_1/w_2)\delta(w_1x_l^{-1}q^n)\delta(w_2x_lq^n)\Phi_{[\bfv]_n}(qx_l)\Phi_{[\bfv]_n}(qx_l^{-1})\theta_1(qx_l^{-2})\bigg).
\end{align*}
Thus, we have established the Serre relation \eqref{equ:checkserre}.

\bibliographystyle{alpha}
\bibliography{k.bib}

\newcommand{\etalchar}[1]{$^{#1}$}
\begin{thebibliography}{BKLW18}

\bibitem[Bec94]{Beck}
Jonathan Beck.
\newblock Braid group action and quantum affine algebras.
\newblock {\em Comm. Math. Phys.}, 165(3):555--568, 1994.

\bibitem[BKLW18]{BKLW18}
Huanchen Bao, Jonathan Kujawa, Yiqiang Li, and Weiqiang Wang.
\newblock Geometric {S}chur duality of classical type.
\newblock {\em Transform. Groups}, 23(2):329--389, 2018.

\bibitem[BLM90]{BLM}
Alexander Beilinson, George Lusztig, and Robert MacPherson.
\newblock A geometric setting for the quantum deformation of {${\rm GL}_n$}.
\newblock {\em Duke Math. J.}, 61(2):655--677, 1990.

\bibitem[CG10]{CG97}
Neil Chriss and Victor Ginzburg.
\newblock {\em Representation theory and complex geometry}.
\newblock Modern Birkh\"{a}user Classics. Birkh\"{a}user Boston, Ltd., Boston,
  MA, 2010.
\newblock Reprint of the 1997 edition.

\bibitem[Dam12]{Da12}
Ilaria Damiani.
\newblock Drinfeld realization of affine quantum algebras: the relations.
\newblock {\em Publ. Res. Inst. Math. Sci.}, 48(3):661--733, 2012.

\bibitem[DR14]{DR14}
J.~Matthew Douglass and Gerhard R\"ohrle.
\newblock Equivariant {$K$}-theory of generalized {S}teinberg varieties.
\newblock {\em Transform. Groups}, 19(1):27--56, 2014.

\bibitem[Dri87]{Dr87}
Vladimir Drinfeld.
\newblock A new realization of {Y}angians and of quantum affine algebras.
\newblock {\em Dokl. Akad. Nauk SSSR}, 296(1):13--17, 1987.

\bibitem[FLL{\etalchar{+}}20]{FLW+}
Zhaobing Fan, Chun-Ju Lai, Yiqiang Li, Li~Luo, and Weiqiang Wang.
\newblock Affine flag varieties and quantum symmetric pairs.
\newblock {\em Mem. Amer. Math. Soc.}, 265(1285):v+123, 2020.

\bibitem[FMX22]{FMX22}
Zhaobing Fan, Haitao Ma, and Husileng Xiao.
\newblock Equivariant {K}-theory approach to {$\imath$}-quantum groups.
\newblock {\em Publ. Res. Inst. Math. Sci.}, 58(3):635--668, 2022.

\bibitem[Gin91]{G91}
Victor Ginzburg.
\newblock Lagrangian construction of the enveloping algebra {$U({\rm sl}_n)$}.
\newblock {\em C. R. Acad. Sci. Paris S\'er. I Math.}, 312(12):907--912, 1991.

\bibitem[GV93]{GV93}
Victor Ginzburg and \'{E}ric Vasserot.
\newblock Langlands reciprocity for affine quantum groups of type {$A_n$}.
\newblock {\em Internat. Math. Res. Notices}, (3):67--85, 1993.

\bibitem[KL87]{KL87}
David Kazhdan and George Lusztig.
\newblock Proof of the {D}eligne-{L}anglands conjecture for {H}ecke algebras.
\newblock {\em Invent. Math.}, 87(1):153--215, 1987.

\bibitem[Li19]{Li19}
Yiqiang Li.
\newblock Quiver varieties and symmetric pairs.
\newblock {\em Represent. Theory}, 23:1--56, 2019.

\bibitem[Li21]{Li21}
Yiqiang Li.
\newblock Quasi-split symmetric pairs of {$\rm{U}(\mathfrak{sl}_n)$} and
  {S}teinberg varieties of classical type.
\newblock {\em Represent. Theory}, 25:903--934, 2021.

\bibitem[LPWZ25]{LPWZ25}
Ming Lu, Xiaolong Pan, Weiqiang Wang, and Weinan Zhang.
\newblock Braid group action and quasi-split affine iquantum groups {III}.
\newblock {\em arXiv preprint \arxiv{2511.00882}}, 2025.

\bibitem[LSX26]{LSX}
Li~Luo, Changjian Su, and Zheming Xu.
\newblock Affine iquantum groups and {S}teinberg varieties of type {C, II}.
\newblock {\em arXiv preprint \arxiv{2605.09589}}, 2026.

\bibitem[Lus85]{Lus85}
George Lusztig.
\newblock Equivariant {$K$}-theory and representations of {H}ecke algebras.
\newblock {\em Proc. Amer. Math. Soc.}, 94(2):337--342, 1985.

\bibitem[Lus99]{Lu99}
George Lusztig.
\newblock Aperiodicity in quantum affine $\mathfrak{gl}_n$.
\newblock {\em Asian J. Math.}, 3(1):147--177, 1999.

\bibitem[LW21]{LW21}
Ming Lu and Weiqiang Wang.
\newblock A {D}rinfeld type presentation of affine {$\imath$}quantum groups
  {I}: {S}plit {ADE} type.
\newblock {\em Adv. Math.}, 393:Paper No. 108111, 46, 2021.

\bibitem[LWZ23]{LWZ23}
Ming Lu, Weiqiang Wang, and Weinan Zhang.
\newblock Braid group action and quasi-split affine {$\imath$}quantum groups
  {I}.
\newblock {\em Represent. Theory}, 27:1000--1040, 2023.

\bibitem[LWZ24]{LWZ24}
Ming Lu, Weiqiang Wang, and Weinan Zhang.
\newblock Braid group action and quasi-split affine $\imath$quantum groups
  {II}: higher rank.
\newblock {\em Comm. Math. Phys.}, 405(6):Paper No. 142, 2024.

\bibitem[LWZ25]{LWZ25degen}
Kang Lu, Weiqiang Wang, and Weinan Zhang.
\newblock Affine $\imath$quantum groups and twisted {Y}angians in {D}rinfeld
  presentations.
\newblock {\em {Comm. Math. Phys.}}, 406(5):Paper No. 98, pp 36, 2025.

\bibitem[LZ25]{LZ25}
Kang Lu and Weinan Zhang.
\newblock A {D}rinfeld type presentation of twisted {Y}angians of quasi-split
  type.
\newblock {\em Commun. Contemp. Math}, 2025.
\newblock \arxiv{2408.06981}.

\bibitem[MO19]{MO19}
Davesh Maulik and Andrei Okounkov.
\newblock Quantum groups and quantum cohomology.
\newblock {\em Ast\'erisque}, (408):ix+209, 2019.

\bibitem[Nak01]{Na01}
Hiraku Nakajima.
\newblock Quiver varieties and finite-dimensional representations of quantum
  affine algebras.
\newblock {\em J. Amer. Math. Soc.}, 14(1):145--238, 2001.

\bibitem[Nak25]{nakajima2025instantons}
Hiraku Nakajima.
\newblock Instantons on {ALE} spaces for classical groups, involutions on
  quiver varieties, and quantum symmetric pairs.
\newblock {\em arXiv preprint \arxiv{2510.13007}}, 2025.

\bibitem[Oko17]{O17}
Andrei Okounkov.
\newblock Lectures on {K}-theoretic computations in enumerative geometry.
\newblock In {\em Geometry of moduli spaces and representation theory},
  volume~24 of {\em IAS/Park City Math. Ser.}, pages 251--380. Amer. Math.
  Soc., Providence, RI, 2017.

\bibitem[SY26]{su2026twisted}
Changjian Su and Yang Yang.
\newblock Twisted {Y}angians and {S}teinberg varieties of type {C}.
\newblock {\em arXiv preprint \arxiv{2603.28084}}, 2026.

\bibitem[Vas93]{V93}
\'Eric Vasserot.
\newblock Repr\'esentations de groupes quantiques et permutations.
\newblock {\em Ann. Sci. \'Ecole Norm. Sup. (4)}, 26(6):747--773, 1993.

\bibitem[Vas98]{V98}
\'{E}ric Vasserot.
\newblock Affine quantum groups and equivariant {$K$}-theory.
\newblock {\em Transform. Groups}, 3(3):269--299, 1998.

\bibitem[Wan23]{W23}
Weiqiang Wang.
\newblock Quantum symmetric pairs.
\newblock In {\em I{CM}---{I}nternational {C}ongress of {M}athematicians.
  {V}ol. 4. {S}ections 5--8}, pages 3080--3102. EMS Press, Berlin, 2023.

\bibitem[Xi07]{X07}
Nanhua Xi.
\newblock Representations of affine {H}ecke algebras and based rings of affine
  {W}eyl groups.
\newblock {\em J. Amer. Math. Soc.}, 20(1):211--217, 2007.

\bibitem[Zha22]{Z22}
Weinan Zhang.
\newblock A {D}rinfeld-type presentation of affine {$\imath$}quantum groups
  {II}: split {BCFG} type.
\newblock {\em Lett. Math. Phys.}, 112(5):Paper No. 89, 33, 2022.

\end{thebibliography}

\end{document}